\documentclass[11pt]{amsart}

\usepackage[T1]{fontenc}
\usepackage[utf8]{inputenc}

\usepackage{amsmath,amssymb,amsfonts,amsthm}
\usepackage{mathrsfs}
\usepackage{hyperref}
\usepackage{graphicx}
\usepackage{float}
\usepackage{placeins}
\usepackage{tikz}
\usetikzlibrary{decorations.pathmorphing}

\numberwithin{equation}{section}

\newcommand{\M}{\mathcal{M}}

\newcommand{\Hh}{\mathbb{H}}
\newcommand{\Pp}{\mathbb{P}}
\DeclareMathOperator{\End}{End}
\DeclareMathOperator{\SL}{SL}
\DeclareMathOperator{\GL}{GL}
\DeclareMathOperator{\Wr}{Wr}

\DeclareMathOperator{\tr}{tr}

\theoremstyle{plain}
\newtheorem{theorem}{Theorem}[section]
\newtheorem{lemma}[theorem]{Lemma}
\newtheorem{proposition}[theorem]{Proposition}
\newtheorem{corollary}[theorem]{Corollary}

\theoremstyle{definition}
\newtheorem{definition}[theorem]{Definition}
\newtheorem{example}[theorem]{Example}

\theoremstyle{remark}
\newtheorem{remark}[theorem]{Remark}

\title[A Non-Abelian Approach to Riemann Surfaces]{A Non-Abelian Approach to Riemann Surfaces}
\author{Mehrzad Ajoodanian}

\date{}

\newcommand{\Cc}{\mathbb{C}}

\newcommand{\Zz}{\mathbb{Z}}
\begin{document}

\begin{abstract}
We develop a non-abelian, gauge-theoretic framework for the Schwarzian derivative and for second-order differential equations on Riemann surfaces.
As applications, we extend Dedekind's Schwarzian approach to elliptic periods to generic one-parameter families of curves of genus $g$ by replacing the non-canonical scalar Picard--Fuchs equation of order $2g$ with a canonical second-order equation with $g\times g$ matrix coefficients on the Hodge bundle. In higher dimensions, we discuss periods of a one-parameter family of cubic threefolds via the intermediate Jacobian.
Finally, we discuss mass--spring systems in mechanics as a natural testing ground for the non-abelian Schwarzian viewpoint.
\end{abstract}

\maketitle

% Unnumbered acknowledgment footnote on the first page.
\begingroup
\renewcommand\thefootnote{}
\footnotetext{I would like to thank Amir Jafari.}
\addtocounter{footnote}{-1}
\endgroup

\begin{flushright}
\emph{``Everyone knows what a curve is until he has studied enough mathematics.''}\\
--- Felix Klein
\end{flushright}

\tableofcontents

\section{Introduction}

In a letter to Borchardt published in 1877, Dedekind discovered a striking relation between the Schwarzian derivative and ratios of elliptic integrals \cite[\S\S~5--6]{L4}. One considers the periods of an elliptic curve as functions of the normalized Hauptmodul $j$ on the modular curve $\Hh/\SL_2(\mathbb{Z})$. These periods satisfy a second-order Picard--Fuchs equation (see, e.g., \cite{L9,L10})
\[
\frac{d^2y}{dj^2} + \frac{1}{j}\frac{dy}{dj} + \frac{31j-4}{144j^2(1-j)^2}y = 0.
\]
If $\omega_1(j),\omega_2(j)$ are two independent solutions, their ratio
\[
\tau(j)=\frac{\omega_2(j)}{\omega_1(j)}
\]
is a local uniformizing parameter on $\Hh$. The ordered pair $(\omega_1,\omega_2)$ is determined only up to the natural action of $\GL_2(\mathbb{C})$ on the two-dimensional space of solutions; hence $\tau$ is determined only up to a M\"obius transformation. Recall that for a locally univalent meromorphic function $f$ one defines the Schwarzian derivative by
\[
S(f):=\left(\frac{f''}{f'}\right)'-\frac12\left(\frac{f''}{f'}\right)^2.
\]
Following Klein's account in \cite[Chapter~III, \S6]{L32}, this invariant goes back to Lagrange (see \cite{L33}).
Since the Schwarzian derivative is invariant under M\"obius transformations, the quantity $S(\tau)$ is well defined, independent of the chosen basis. Dedekind's computation asserts that $S(\tau)$ is in fact the rational function
\[
S(\tau)(j)=\frac{3}{8(1-j)^2}+\frac{4}{9j^2}+\frac{23}{72j(1-j)}.
\]

\medskip
\noindent
\textbf{From Schwarzian to curvature.}
A guiding principle of this paper is that Dedekind's identity is best understood as a curvature statement for a connection naturally attached to the differential equation. Concretely, the pre-Schwarzian  $ A(f)=f''/f'$ behaves like a Maurer--Cartan (quantum) connection, and its curvature $F_A=A'-A^2$ is the Schwarzian derivative. On a general Riemann surface the curvature fails to be a genuine quadratic differential: it transforms with a controlled \emph{projective anomaly} given by the Schwarzian of the coordinate changes. This anomaly is precisely the transformation law of projective connections \cite{L13}.

\medskip
\noindent
The main lemma and theorem show that if $g$ is an invertible matrix-valued solution of $g''=2Ag'+qg$, then under the gauge action, the connection $g^{-1}\bullet A$ has curvature
\[
F_{g^{-1}\bullet A}=g^{-1}(F_A-q)g.
\]
As a corollary, the conjugacy invariants of $F_A-q$ (traces of powers, coefficients of characteristic polynomials) define invariants of the differential equation; we call them \emph{characteristic invariants} by analogy with Chern's characteristic forms \cite{L16}. In the scalar case this reduces to Dedekind's classical mechanism: the Schwarzian of the ratio of periods is determined by the coefficients of the Picard--Fuchs equation.

\medskip
\noindent
Some of the ideas developed here continue in a higher-order direction in recent work of Jafari \cite{L36}, where the reparametrization formalism is extended to higher Schwarzian cocycles and noncommutative Wilczynski-type invariants.

\medskip
\noindent
\textbf{Applications:}

\medskip
\noindent
We discuss three applications of the non-abelian Schwarzian formalism.
The same formalism produces canonical second-order systems in three different settings: periods of curves, periods of higher-dimensional varieties, and mechanical systems.
The higher-genus case is the main geometric application: it replaces a non-canonical scalar Picard--Fuchs equation of order $2g$ by a canonical second-order equation with $g\times g$ matrix coefficients on the Hodge bundle.

\begin{enumerate}
\item[(i)] \emph{Periods of Riemann surfaces of higher genus} (Sections~\ref{sec:higher-genus-periods} and~\ref{sec:genus-two-family}).
Periods in a holomorphic family satisfy Picard--Fuchs equations arising from the Gauss--Manin connection (see \cite{L9,L10,L15}).
For a one-parameter family of genus $g$ curves, the classical reduction typically yields a non-canonical scalar equation of order $2g$ \cite{L10}; for a \emph{generic} family (Definition~\ref{def:generic-family}) we instead construct canonically a second-order equation with $g\times g$ matrix coefficients on the Hodge bundle, whose solutions are period vectors.
This canonical system is the natural higher-genus analogue of Dedekind's genus-one picture, and its associated matrix Schwarzian produces the corresponding characteristic invariants.

\item[(ii)] \emph{Periods of higher-dimensional varieties: cubic threefolds} (Section~\ref{sec:cubic-threefolds}).
For a one-parameter family of smooth cubic threefolds in $\mathbb{P}^4$, the Gauss--Manin connection gives a first-order system on $H^3$ of rank $10$.
Our formalism again produces a canonical second-order system, now on the rank-$5$ Hodge bundle $E=H^{2,1}$, showing that the same Schwarzian viewpoint extends beyond curves to a higher-dimensional variation of Hodge structure.

\item[(iii)] \emph{Mass--spring systems} (Section~\ref{sec:mass-spring-system}).
Mass--spring systems naturally give rise to second-order ordinary differential equations with matrix-valued coefficients.
They provide a concrete mechanical model for the same formalism: the coefficients are most naturally viewed as geometric data on a time curve, transforming coherently under changes of local time coordinate.

\end{enumerate}

\medskip
\noindent
\textbf{Organization of the paper.}
The paper is divided into five parts.

\smallskip
\noindent
\emph{Part~I} (Sections~2--3) recalls the classical Schwarzian invariant of a second-order scalar ODE and interprets it as curvature.

\smallskip
\noindent
\emph{Part~II} (Sections~4--8) develops a (quantum) theory of connections and differentials on Riemann surfaces.

\smallskip
\noindent
\emph{Part~III} (Section~11) discusses modular forms and modular connections.

\smallskip
\noindent
\emph{Part~IV} (Sections~12--14) treats periods of Riemann surfaces and cubic threefolds from a Schwarzian perspective.

\smallskip
\noindent
\emph{Part~V} (Section~15) discusses mass--spring systems in mechanics and their relation to the Schwarzian.

\part{Classical Schwarzian Derivative}

\section{Second order scalar ODE}\label{sec:classical-ode-schwarzian}

Let $U\subset \Cc$ be a domain with coordinate $z$, and let $p,q\in \M(U)$ be meromorphic functions.
We consider the second order linear equation
\begin{equation}\label{eq:classical-second-order-pq}
y''=2p\,y' + q\,y.
\end{equation}
If $y_1,y_2$ are two independent solutions, then the ratio $y_1/y_2$ is locally
univalent and is well defined only up to a M\"obius transformation (change of
basis in the two-dimensional solution space). The Schwarzian derivative,
\[
S(w):=\frac{w'''}{w'}-\frac{3}{2}\Bigl(\frac{w''}{w'}\Bigr)^2
=\Bigl(\frac{w''}{w'}\Bigr)'-\frac12\Bigl(\frac{w''}{w'}\Bigr)^2,
\]
is invariant under M\"obius transformations, hence $S(y_1/y_2)$ is canonically
attached to the equation.

The following classical computation is standard (see, for example,
\cite[\S\,IV.2]{L13}). For a classical discussion of the
projective viewpoint on second-order equations, see also Klein's
\emph{Lectures on the Icosahedron} \cite[Chapter~III, \S7]{L32}.

\begin{proposition}\label{prop:classical-schwarzian-ode}
Let $y_1,y_2$ be two linearly independent solutions of
\eqref{eq:classical-second-order-pq}. Then the Schwarzian derivative
$S(y_1/y_2)$ is independent of the chosen ordered basis of solutions and satisfies
\begin{equation}\label{eq:classical-schwarzian-formula}
S\Bigl(\frac{y_1}{y_2}\Bigr)=2\bigl(p'-p^2-q\bigr).
\end{equation}
\end{proposition}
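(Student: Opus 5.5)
The plan is to compute directly with the Wronskian, treating basis-independence separately and first. Any other ordered basis is $(ay_1+by_2,\,cy_1+dy_2)$ with $ad-bc\neq 0$. The new ratio is then the Möbius transform $(aw+b)/(cw+d)$ of $w=y_1/y_2$. So independence of the basis follows from the Möbius invariance of $S$ recalled just before the proposition; I will either cite it or note that it follows from the cocycle rule $S(h\circ w)=(S(h)\circ w)\,w'^2+S(w)$ together with $S(h)=0$ for Möbius $h$.

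For the formula, set $w=y_1/y_2$ and let $W=y_1'y_2-y_1y_2'$ be the Wronskian, so that $w'=W/y_2^2$. First I will verify $W\not\equiv 0$, which is linear independence, and that $w$ is locally univalent away from the zeros of $y_2$ and the singularities of $p,q$.

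Next, differentiating $W$ and substituting $y_i''=2py_i'+qy_i$ kills the $q$-terms and gives Abel's identity $W'=2pW$. Hence
\[
\frac{w''}{w'}=\frac{W'}{W}-2\frac{y_2'}{y_2}=2p-2\frac{y_2'}{y_2}.
\]
Write $u=y_2'/y_2$. The equation for $y_2$ becomes the Riccati equation $u'=y_2''/y_2-u^2=2pu+q-u^2$.

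Then I substitute $A:=w''/w'=2p-2u$ into $S(w)=A'-\tfrac12A^2$:
\[
S(w)=2p'-2u'-\tfrac12(4p^2-8pu+4u^2)=2p'-2(2pu+q-u^2)-2p^2+4pu-2u^2 .
\]
The terms $4pu$ and $2u^2$ cancel, leaving $2(p'-p^2-q)$.

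The only delicate point is bookkeeping of where the identity holds. I will derive it on the open set where $y_2\neq0$ and $p,q$ are holomorphic, and then extend by the identity theorem, since both sides are meromorphic on $U$. I do not anticipate a genuine obstacle; the main thing to get right is the sign and coefficient conventions forced by writing the equation as $y''=2py'+qy$ rather than $y''+Py'+Qy=0$.
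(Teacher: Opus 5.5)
Your proof is correct and is the standard computation that the paper relies on. The paper writes out no proof of its own; it cites \cite{L13} for the formula and uses the same Möbius-invariance argument for basis independence. It is worth noting that your Abel-plus-Riccati step is exactly the scalar case $V=\mathbb{C}$, $g=y_2$ of Lemma~\ref{lem:gauge-curvature-ehalf}: since $\tfrac12\,w''/w'=p-y_2'/y_2=y_2^{-1}\bullet p$, your Riccati equation for $u=y_2'/y_2$ says precisely that $F_{p-u}=F_p-q$, hence $S(w)=2F_{p-u}=2(p'-p^2-q)$.
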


\noindent\emph{Comment.}
The right-hand side $2(p'-p^2-q)$ is invariant under gauge changes of the dependent
variable $y\mapsto f\,y$ (with $f$ nowhere vanishing), and under a change of
coordinate it transforms with the usual Schwarzian anomaly. This is precisely the
``projective curvature'' viewpoint that motivates the non-abelian generalizations
developed later.

The identity \eqref{eq:classical-schwarzian-formula} implies that
$2(p'-p^2-q)$ is invariant under the two basic operations on
\eqref{eq:classical-second-order-pq}:

\begin{itemize}
\item[(i)] \emph{Gauge (change of dependent variable).}
If $y=f\,\widetilde y$ with $f$ nowhere vanishing, then $\widetilde y$ satisfies
$\widetilde y''=2\widetilde p\,\widetilde y'+\widetilde q\,\widetilde y$ with
\[
\widetilde p=p-\frac{f'}{f},
\qquad
\widetilde q=q+2p\frac{f'}{f}-\frac{f''}{f},
\]
and one checks directly that
$\widetilde p'-\widetilde p^{\,2}-\widetilde q=p'-p^2-q$.

\item[(ii)] \emph{Change of coordinate.}
If $z=\lambda(\zeta)$ is a biholomorphism and $\widetilde y(\zeta)=y(\lambda(\zeta))$,
then $\widetilde y$ satisfies $\widetilde y''=2\widetilde p\,\widetilde y'+\widetilde q\,\widetilde y$
with
\[
\widetilde p=(p\circ\lambda)\lambda'+\frac12\frac{\lambda''}{\lambda'},
\qquad
\widetilde q=(q\circ\lambda)(\lambda')^2.
\]
In particular,
\[
2(\widetilde p'-\widetilde p^{\,2}-\widetilde q)
=(2(p'-p^2-q)\circ\lambda)(\lambda')^2+S(\lambda).
\]
\end{itemize}
Thus $2(p'-p^2-q)$ transforms as a projective connection.

\section{Projective curvature and the Schwarzian anomaly}\label{sec:projective-curvature}

Equation \eqref{eq:classical-second-order-pq} suggests that the data of $(p,q)$ should be
organized so that the projective curvature \eqref{eq:classical-schwarzian-formula} becomes intrinsic.

\subsection*{Connections of eccentricity 1/2.}
The transformation law in (ii) shows that $p$ behaves like a connection coefficient: under
$z=\lambda(\zeta)$ one has
\[
\widetilde p=(p\circ\lambda)\lambda'+\frac12\frac{\lambda''}{\lambda'}.
\]
This is the one-dimensional avatar of the usual gauge transformation law for a connection,
except that the change of coordinate contributes an inhomogeneous term. The corresponding
(curvature-type) combination is $p'-p^2$.

\subsection*{Projective anomaly.}
The expression
\[
r:=2(p'-p^2-q)
\]
is the classical projective curvature attached to \eqref{eq:classical-second-order-pq}; it
equals $S(y_1/y_2)$ by Proposition~\ref{prop:classical-schwarzian-ode}.
Under coordinate change $z=\lambda(\zeta)$ it satisfies
\[
\widetilde r=(r\circ\lambda)(\lambda')^2+S(\lambda),
\]
so $r$ is not a quadratic differential in general, but it fails to be one by a \emph{scalar}
term given by the Schwarzian derivative of the transition map. In particular, if all
coordinate changes are M\"obius transformations (so $S(\lambda)=0$), then $r$ is a genuine
quadratic differential. This is the mechanism behind the special features of the modular
case.

\subsection*{Schwarzian as curvature of a connection.}
Let $f$ be a locally univalent meromorphic function on $U\subset\Cc$. On the open set where
$f'\neq 0$ the logarithmic derivative
\[
A(f)=\frac{f''}{f'}
\]
behaves like a connection: for any biholomorphism $\lambda$ one has the coordinate-change rule
\[
A(f\circ\lambda)
=\lambda'\bigl(A(f)\circ\lambda\bigr)+\frac{\lambda''}{\lambda'}.
\]
The curvature-type expression
\[
F_A(f)=\left(\frac{f''}{f'}\right)'-\frac12\left(\frac{f''}{f'}\right)^2
\]
is exactly the Schwarzian derivative $S(f)$. The projective anomaly is the Schwarzian chain
rule
\[
S(f\circ\lambda)=(\lambda')^2\bigl(S(f)\circ\lambda\bigr)+S(\lambda),
\]
and $S$ is invariant under postcomposition of $f$ by a M\"obius transformation. This point
of view motivates the non-abelian constructions developed in the next part.

\part{Quantum vs classical}

\section{Classical differentials and connections}\label{sec:classical-differentials-connections}

Let $X$ be a Riemann surface. Choose a covering by coordinate charts
$\{U_i\}_i$ on $X$. By abuse of notation we identify each $U_i$ with an open subset
of $\mathbb{C}$. For each pair $i,j$ with $U_i\cap U_j\neq\varnothing$, the change of
coordinate induces a biholomorphism
\[
\lambda_{ij} : U_i\cap U_j \longrightarrow U_i\cap U_j,
\]
where the domain is viewed as an open subset of $\mathbb{C}$ via the coordinate on
$U_j$, and the codomain is viewed as an open subset of $\mathbb{C}$ via the coordinate
on $U_i$. The maps $\lambda_{ij}$ satisfy the usual cocycle conditions.

Recall that $K_X$ denotes the canonical line bundle of holomorphic $1$-forms
on $X$. Fix an integer $m$. A meromorphic connection on $K_X^m$ may be encoded
locally by meromorphic functions $a_i\in\mathcal{M}(U_i)$ satisfying on overlaps
\[
a_j = \lambda'_{ij}(a_i\circ \lambda_{ij}) - m\frac{\lambda''_{ij}}{\lambda'_{ij}}.
\]

A \emph{classical meromorphic $m$-differential} on $X$ is a meromorphic section of $K_X^m$.
Equivalently, it is given by meromorphic functions $\omega_i\in \M(U_i)$ satisfying on overlaps
\[
\omega_j = (\lambda'_{ij})^{m}\,(\omega_i\circ \lambda_{ij}).
\]
More generally, if $V$ is a finite-dimensional complex vector space, a $V$-valued classical
$m$-differential is a meromorphic section of $K_X^m\otimes V$, and in local coordinates it is
represented by meromorphic maps $\omega_i:U_i\to V$ with the same tensorial rule.

Similarly, a \emph{meromorphic connection} on $K_X^m$ may be encoded by meromorphic functions
$a_i\in \M(U_i)$ satisfying
\[
a_j = \lambda'_{ij}(a_i\circ \lambda_{ij}) - m\frac{\lambda''_{ij}}{\lambda'_{ij}}.
\]
If $e\in\Zz$, this is the same as a connection on $K_X^{-e}$ with $m=-e$; the parameter $e$ is
the \emph{eccentricity} used later in the quantum transformation law.

When we allow coefficients in $\End(V)$, the classical objects above correspond exactly to the
$\mathcal O$-linear quantum objects introduced in Section~\ref{sec:quantum-connections} below:

\begin{example}
A quantum connection (resp.\ a quantum $m$-differential) with values in $V$ is called classical with values in $\End(V)$ if it is $\mathcal{O}_X$-linear.
Equivalently, it is given by multiplication by a coefficient $a_i\in\mathcal{M}(U_i,\End(V))$ (resp.\ $\psi_i\in\mathcal{M}(U_i,\End(V))$):
\[
A_i(f)=a_if,\qquad \Psi_i(f)=\psi_if.
\]
The coefficients satisfy the corresponding change of coordinate rules. In particular, an $\mathcal{O}_X$-linear quantum connection of eccentricity $e$ corresponds to
a classical connection on $K_X^{-e}$ with coefficients in $\End(V)$, and an $\mathcal{O}_X$-linear quantum $m$-differential corresponds to a classical section of
$K_X^{m}\otimes \End(V)$.
\end{example}

This classical vs.\ quantum distinction is important throughout the paper: in many
geometric situations the most natural operations (Maurer--Cartan connections, Wronskians...) are not $\mathcal O$-linear.

\section{Quantum differentials and connections }\label{sec:quantum-connections}

We now pass from $\mathcal O$-linear tensors to operator-valued objects. The basic input is
that many natural constructions (Maurer--Cartan, Wronskians, exponentials, etc.) are only
defined on Zariski open subsets depending on the input function. The following definition
packages this ``generic'' behavior in a coordinate-free way.

Many of the operators that appear naturally in this paper (for instance
$f\mapsto f''/f'$ or $f\mapsto e^f$) do not produce a meromorphic function on the
\emph{entire} chart $U_i$ even when the input $f$ is meromorphic on $U_i$. Rather,
the output is meromorphic on a Zariski open subset obtained by deleting a
discrete set of points depending on $f$ (for $f''/f'$ one removes the zeros of
$f'$, while for $e^f$ one removes the poles of $f$).

To keep the formalism honest while retaining the coordinate-free cocycle laws,
we systematically work ``generically'' on charts.
If $U\subset X$ is open, we call $U^\circ\subset U$ \emph{Zariski open} if
$U\setminus U^\circ$ is a discrete subset. For a complex vector space $V$ we set
\[
\M_{\mathrm{gen}}(U,V)
:=\varinjlim_{U^\circ\subset U\ \text{Zariski open}}\M(U^\circ,V),
\]
the direct limit with respect to restriction. Thus an element of
$\M_{\mathrm{gen}}(U,V)$ is represented by a $V$-valued meromorphic function on
some Zariski open subset of $U$, and two representatives are identified if they
agree after restriction to a smaller Zariski open subset.

All identities below involving quantum connections and quantum differentials are
understood in this ``generic'' sense: the equality of two expressions means that
they agree on some Zariski open subset on which both sides are defined. In
degenerate cases (for example $f''/f'$ when $f$ is constant) the natural domain
of definition may be empty; such exceptional inputs occur in very special
situations and will be ignored.

\begin{definition}\label{def:quantum-connection}
Fix $e\in\mathbb{C}$ (the eccentricity) and a complex vector space $V$.
A quantum connection on $X$ of eccentricity $e$ with values in $V$ is a family
of local operators
\[
A_i : \M_{\mathrm{gen}}(U_i,V)\dashrightarrow \M_{\mathrm{gen}}(U_i,V),
\]
such that on every overlap $U_i\cap U_j$ one has, as operators on $\M_{\mathrm{gen}}(U_i\cap U_j,V)$,
\[
A_j = \lambda'_{ij}(A_i\circ \lambda_{ij}) + e\frac{\lambda''_{ij}}{\lambda'_{ij}}I.
\]
Here $I$ denotes the identity operator on $\M_{\mathrm{gen}}(U_i\cap U_j,V)$, scalar functions act
by pointwise multiplication, and the transported operator is
\[
(A_i\circ \lambda_{ij})(f) := \bigl(A_i(f\circ \lambda_{ij}^{-1})\bigr)\circ \lambda_{ij}.
\]
\end{definition}

\begin{remark}\label{rem:maurer-cartan-fpp}
In the scalar case, the Maurer--Cartan operator $A(f):=f''/f'$ (defined on the Zariski open subset where $f'\neq 0$)
is a basic example of a quantum connection of eccentricity $e=1$. Indeed, for a change of coordinate $\lambda$
one has $(f\circ\lambda)'=(f'\circ\lambda)\lambda'$ and
\[
\frac{(f\circ\lambda)''}{(f\circ\lambda)'}=\lambda'\left(\frac{f''}{f'}\circ\lambda\right)+\frac{\lambda''}{\lambda'},
\]
i.e.\ $A(f\circ\lambda)=\lambda'\,(A(f)\circ\lambda)+\lambda''/\lambda'$.
\end{remark}

\begin{definition}\label{def:quantum-m-differential}
Fix an integer $m$ and a complex vector space $V$. A quantum $m$-differential on
$X$ with values in $V$ is a family of local operators
\[
\Psi_i : \M_{\mathrm{gen}}(U_i,V)\dashrightarrow \M_{\mathrm{gen}}(U_i,V),
\]
such that on every overlap $U_i\cap U_j$ one has, as operators on $\M_{\mathrm{gen}}(U_i\cap U_j,V)$,
\[
\Psi_j = (\lambda'_{ij})^{m}(\Psi_i\circ \lambda_{ij}),
\]
where
\[
(\Psi_i\circ \lambda_{ij})(f) := \bigl(\Psi_i(f\circ \lambda_{ij}^{-1})\bigr)\circ \lambda_{ij}.
\]
In the definition of a quantum $m$-differential one may more generally allow
operators
\[
\Psi_i : \M_{\mathrm{gen}}(U_i,V)\dashrightarrow \M_{\mathrm{gen}}(U_i,W),
\]
for (possibly different) complex vector spaces $V$ and $W$; the same transformation rule applies.
\end{definition}

\subsection*{Examples.}

\begin{example}
Let $\omega$ be an ordinary meromorphic $m$-differential on $X$. In local coordinates it is given by meromorphic functions $\omega_i$ satisfying $\omega_j =
(\lambda'_{ij})^{m}(\omega_i\circ \lambda_{ij})$. Then multiplication by $\omega$,
\[
\Psi_i(f) := \omega_i f,
\]
defines a quantum $m$-differential (with values in any $V$).
\end{example}

\begin{example}
Assume $m\ge 0$. Define locally $\Psi_i(f):=(f')^{m}$. Then $\Psi$ is a quantum
$m$-differential since $(f\circ \lambda)' = (f'\circ \lambda)\lambda'$ implies
$((f\circ \lambda)')^{m} = (\lambda')^{m}((f')^{m}\circ \lambda)$.
\end{example}

\begin{example}[Wronskian for vector-valued functions]
Let $V$ be a complex vector space of dimension $n$. For $f\in\mathcal{M}(U_i,V)$ define
\[
W_i(f) := f\wedge f'\wedge f''\wedge \cdots \wedge f^{(n-1)}\in \bigwedge^{n}V.
\]
Then $W=\{W_i\}$ is a quantum $\frac{n(n-1)}{2}$-differential with values in
$\bigwedge^{n}V$.
\end{example}

\begin{example}[Wronskian of a quantum differential]
Let $r\in\mathbb{Z}$ and let $V$ be a complex vector space of dimension $n$.
Suppose $\Psi$ is a quantum $r$-differential with values in $V$. For $k\ge 0$ define
$\Psi^{(k)}$ locally by $\Psi^{(k)}_i(f) := (\Psi_i(f))^{(k)}$. Note that $\Psi^{(1)}$ need not be a quantum differential. Nevertheless the Wronskian
\[
\Wr(\Psi)_i(f) := \Psi_i(f)\wedge \Psi^{(1)}_i(f)\wedge \cdots \wedge \Psi^{(n-1)}_i(f)\in \bigwedge^{n}V
\]
defines a quantum $\bigl(nr+\frac{n(n-1)}{2}\bigr)$-differential with values in
$\bigwedge^{n}V$.
\end{example}

\begin{lemma}[Left Maurer--Cartan connection]
Let $V$ be a finite-dimensional complex vector space and let $\Psi=\{\Psi_i\}$ be a quantum $m$-differential on $X$ with values in $\End(V)$, where $m\neq 0$.
For each $i$ define
\[
A_i(f) := \frac{1}{m}\Psi_i(f)^{-1}\Psi'_i(f)
\]
on the open subset of $U_i$ where $\Psi_i(f)$ is pointwise invertible. Then
$A=\{A_i\}$ is a quantum connection of eccentricity $1$ with values in $\End(V)$.
\end{lemma}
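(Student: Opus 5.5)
Fix an overlap $U_i\cap U_j$ and write $\lambda=\lambda_{ij}$. We verify the cocycle law of Definition~\ref{def:quantum-connection} with $e=1$, namely
\[
A_j=\lambda'(A_i\circ\lambda)+\frac{\lambda''}{\lambda'}I ,
\]
by direct computation from the transformation law of $\Psi$. Everything is in the generic sense: we work on a Zariski open subset where $\Psi_i(f)$ and $\Psi_j$ of the transported input are invertible and $\lambda'\neq 0$. Invertibility transfers between charts because $(\lambda')^m$ is a nonvanishing scalar, so these loci correspond under $\lambda$ up to discrete sets.

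First, set $\Phi:=(\Psi_i\circ\lambda)(f)=\Psi_i(f\circ\lambda^{-1})\circ\lambda$. The quantum differential rule gives
\[
\Psi_j(f)=(\lambda')^m\,\Phi .
\]
Differentiating with the Leibniz rule, and using that the scalar $(\lambda')^m$ commutes with the matrix $\Phi$, yields
\[
\Psi_j(f)'=m(\lambda')^{m-1}\lambda''\,\Phi+(\lambda')^m\Phi' .
\]
Multiplying on the left by $\Psi_j(f)^{-1}=(\lambda')^{-m}\Phi^{-1}$ and dividing by $m$ gives
\[
A_j(f)=\frac{\lambda''}{\lambda'}I+\frac1m\Phi^{-1}\Phi' .
\]

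Second, identify the term $\frac1m\Phi^{-1}\Phi'$. Write $G:=\Psi_i(f\circ\lambda^{-1})$, so that $\Phi=G\circ\lambda$. The chain rule gives $\Phi'=\lambda'\,(G'\circ\lambda)$, hence
\[
\frac1m\Phi^{-1}\Phi'=\lambda'\Bigl(\tfrac1m G^{-1}G'\Bigr)\circ\lambda=\lambda'\,\bigl(A_i(f\circ\lambda^{-1})\circ\lambda\bigr)=\lambda'(A_i\circ\lambda)(f),
\]
which is exactly the transported operator of Definition~\ref{def:quantum-connection}. Substituting into the first step gives the required identity with $e=1$. Notice that the factor $\frac1m$ is what normalizes the eccentricity to $1$, independently of $m$. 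This is why $m\neq0$ is needed.

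The computation is routine. The main care point is keeping track of the noncommutative order, using $\Psi^{-1}\Psi'$ rather than $\Psi'\Psi^{-1}$ (the left Maurer--Cartan form). This is harmless here only because the anomaly $(\lambda')^m$ is scalar and so central. The only other bookkeeping is the generic-domain issue: one must check that the set where invertibility fails remains discrete after transport by $\lambda$. This holds since $\lambda$ is a biholomorphism and $\lambda'$ never vanishes.
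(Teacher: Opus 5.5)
Your proof is correct and follows the same route as the paper: differentiate $\Psi_j=(\lambda')^m(\Psi_i\circ\lambda)$, multiply on the left by $\Psi_j^{-1}$, and use the chain rule to identify the remaining term with $\lambda'(A_i\circ\lambda)$. You write out the ``direct computation'' that the paper leaves to the reader, and you also handle the generic-domain bookkeeping that the paper omits.
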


\begin{proof}
Differentiate the relation $\Psi_j = (\lambda')^{m}(\Psi_i\circ \lambda)$ and multiply on the
left by $\Psi_j^{-1}$. A direct computation shows that the logarithmic derivative
$\frac{1}{m}\Psi^{-1}\Psi'$ transforms as $A_j = \lambda'(A_i\circ \lambda) + \frac{\lambda''}{\lambda'}I$. \qedhere
\end{proof}

\begin{example}[A scalar connection from a Wronskian]
Let $m\in\mathbb{Z}$ and let $V$ be a complex vector space of dimension $n$. If $\Psi$ is a quantum $m$-differential with values in $V$, then the previous example shows that $\Wr(\Psi)$ is a quantum
\[
N := mn + \frac{n(n-1)}{2} = \frac{n(n+2m-1)}{2}
\]
-differential with values in the one-dimensional space $\bigwedge^{n}V$. On the open subset where $\Wr(\Psi)(f)\neq 0$ one may form the scalar quotient
\[
\frac{\bigl(\Wr(\Psi)_i(f)\bigr)'}{\Wr(\Psi)_i(f)}.
\]
Defining
\[
A_i(f) := \frac{1}{N}\frac{\bigl(\Wr(\Psi)_i(f)\bigr)'}{\Wr(\Psi)_i(f)}
= \frac{2}{n(n+2m-1)}\frac{\bigl(\Wr(\Psi)_i(f)\bigr)'}{\Wr(\Psi)_i(f)},
\]
one obtains a scalar quantum connection of eccentricity $1$.
\end{example}

\begin{example}[Classical connections and $m$-differentials with values in $\End(V)$]
A quantum connection (resp.\ a quantum $m$-differential) with values in $V$ is called classical with values in $\End(V)$ if it is $\mathcal{O}_X$-linear.
Equivalently, it is given by multiplication by a coefficient $a_i\in\mathcal{M}(U_i,\End(V))$ (resp.\ $\psi_i\in\mathcal{M}(U_i,\End(V))$):
\[
A_i(f)=a_if,\qquad \Psi_i(f)=\psi_if.
\]
The coefficients satisfy the corresponding change of coordinate rules. In particular, an $\mathcal{O}_X$-linear quantum connection of eccentricity $e$ corresponds to
a classical connection on $K_X^{-e}$ with coefficients in $\End(V)$, and an $\mathcal{O}_X$-linear quantum $m$-differential corresponds to a classical section of
$K_X^{m}\otimes \End(V)$.
\end{example}

\section{Curvature}\label{sec:curvature-gauge}

The classical identity $S(f)=(f''/f')'-\tfrac12(f''/f')^2$ suggests that the correct
``curvature'' of an operator-valued connection should involve both a derivative term and
a quadratic term. In the quantum setting we also have to keep track of the coordinate-change
anomaly.

\begin{definition}\label{def:quantum-covariant-derivative}
Let $A$ be a quantum connection on $X$ with values in $\End(V)$ and eccentricity $e\neq 0$.
For $m\in\mathbb{Z}$ and a quantum $m$-differential $\Psi$ with values in $V$, define the quantum covariant derivative $\nabla_A\Psi$ locally by
\[
(\nabla_A\Psi)_i(f) := \Psi_i(f)' - \frac{m}{e}A_i(f)\Psi_i(f).
\]
\end{definition}

\begin{proposition}
If $A$ has eccentricity $e\neq 0$ and $\Psi$ is a quantum $m$-differential, then $\nabla_A\Psi$ is a quantum $(m+1)$-differential.
\end{proposition}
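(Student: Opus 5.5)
The plan is a direct verification of the transformation law on an overlap $U_i\cap U_j$. Write $\lambda=\lambda_{ij}$. We need to show
\[
(\nabla_A\Psi)_j=(\lambda')^{m+1}\bigl((\nabla_A\Psi)_i\circ\lambda\bigr)
\]
as operators on $\M_{\mathrm{gen}}$. Fix an input $f$ on the $U_j$-chart and set $g:=f\circ\lambda^{-1}$, so that by definition $(\Psi_i\circ\lambda)(f)=\Psi_i(g)\circ\lambda$. The same convention applies to $A_i$. All identities are understood generically, on a Zariski open set where every expression is defined.

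First, differentiate the defining relation $\Psi_j(f)=(\lambda')^m\,(\Psi_i(g)\circ\lambda)$ with respect to the $U_j$-coordinate. The product and chain rules give
\[
\Psi_j(f)'=m(\lambda')^{m-1}\lambda''\,(\Psi_i(g)\circ\lambda)+(\lambda')^{m+1}\,(\Psi_i(g)'\circ\lambda).
\]
The first term is the anomaly, and it is scalar. Using the relation for $\Psi$ again, it can be rewritten as $m\frac{\lambda''}{\lambda'}\Psi_j(f)$.

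Second, expand the connection term using Definition~\ref{def:quantum-connection}:
\[
\frac{m}{e}A_j(f)\Psi_j(f)=\frac{m}{e}\Bigl(\lambda'(A_i(g)\circ\lambda)+e\frac{\lambda''}{\lambda'}\Bigr)(\lambda')^m(\Psi_i(g)\circ\lambda).
\]
This splits into two pieces. The first is $(\lambda')^{m+1}\bigl(\frac{m}{e}A_i(g)\Psi_i(g)\bigr)\circ\lambda$. The second is $m\frac{\lambda''}{\lambda'}\Psi_j(f)$; here the factor $e$ cancels against $\frac1e$, and this is exactly why the normalization $\frac{m}{e}$ is chosen and $e\neq0$ is needed.

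Subtracting, the two inhomogeneous terms $m\frac{\lambda''}{\lambda'}\Psi_j(f)$ cancel. What remains is $(\lambda')^{m+1}\bigl(\Psi_i(g)'-\frac{m}{e}A_i(g)\Psi_i(g)\bigr)\circ\lambda$, which is the claimed transformation law.

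The only point needing care, and the place I expect the main obstacle, is bookkeeping. One must keep the scalar factors $\lambda'$ and $\lambda''/\lambda'$ commuting past the $\End(V)$-valued $A_i(g)$, which is fine because they are scalar multiplications. One must also check that $A_j$ is evaluated on the same input $f$ as $\Psi_j$, so that the composite $A_j(f)\Psi_j(f)$ transports correctly. Finally, one must make sure that the domains of definition match up to discrete sets, which is handled by the generic convention on $\M_{\mathrm{gen}}$.
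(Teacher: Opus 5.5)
Your proof is correct and follows the paper's own argument: differentiate $\Psi_j=(\lambda')^m(\Psi_i\circ\lambda)$, substitute the rule $A_j=\lambda'(A_i\circ\lambda)+e\frac{\lambda''}{\lambda'}I$, and observe that the normalization $\frac{m}{e}$ makes the two $m\frac{\lambda''}{\lambda'}\Psi_j(f)$ terms cancel. The paper only sketches this computation, and you have written it out in full. Note that, like the paper in its own proofs, you read $e\frac{\lambda''}{\lambda'}I$ as a scalar multiple of the identity endomorphism, so that $A_j(f)$ picks up $e\frac{\lambda''}{\lambda'}$ rather than $e\frac{\lambda''}{\lambda'}f$; the cancellation works precisely under this reading.
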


\begin{proof}
On an overlap with change of coordinate $\lambda$, differentiate the identity
$\Psi_j = (\lambda')^{m}(\Psi_i\circ \lambda)$ and use the transformation rule
$A_j = \lambda'(A_i\circ \lambda) + e\frac{\lambda''}{\lambda'}I$. The coefficient $\frac{m}{e}$ is chosen precisely so that the $\frac{\lambda''}{\lambda'}$-terms cancel, yielding
\[
(\nabla_A\Psi)_j = (\lambda')^{m+1}\bigl((\nabla_A\Psi)_i\circ \lambda\bigr).
\]
\qedhere
\end{proof}

\begin{remark}[Classical vs.\ quantum]
Even if $A$ and $\Psi$ are classical in the sense of the previous example, the operator $\nabla_A\Psi$ need not be classical: the derivative term $\Psi_i(f)'$
typically produces an $f'$-term. In the classical setting one instead defines the coefficient-level covariant derivative by
\[
(\nabla_a\psi)_i := \psi'_i - \frac{m}{e}a_i\psi_i,
\]
which is again classical. Thus the classical and quantum covariant derivatives are distinct constructions.
\end{remark}

We briefly recall the classical Cartan structural equation for a connection with values in a Lie algebra. Let $G$ be a Lie group with Lie algebra $\mathfrak{g}$.
A $\mathfrak{g}$-valued connection $1$-form on a manifold $M$ is a $\mathfrak{g}$-valued differential $1$-form $A\in\Omega^{1}(M,\mathfrak{g})$.
Its curvature is the $\mathfrak{g}$-valued $2$-form
\[
F = dA + \frac{1}{2}[A,A],
\]
equivalently $F=dA + A\wedge A$. Under a gauge transformation $g:M\to G$ one has
$A\mapsto g^{-1}Ag + g^{-1}dg$ and $F\mapsto g^{-1}Fg$.

\begin{definition}\label{def:quantum-curvature}
Let $A$ be a quantum connection on $X$ with values in $\End(V)$ and eccentricity $e\neq 0$.
The quantum curvature of $A$ is the family of local operators
\[
(F_A)_i:\M_{\mathrm{gen}}(U_i,\End(V))\dashrightarrow \M_{\mathrm{gen}}(U_i,\End(V))
\]
defined by
\[
(F_A)_i(f) := A_i(f)' - \frac{1}{2e}A_i(f)^2.
\]
\end{definition}

\begin{lemma}
Let $A$ be a quantum connection of eccentricity $e\neq 0$ with values in $\End(V)$.
On an overlap $U_i\cap U_j$ with change of coordinate $\lambda=\lambda_{ij}$, one has
\[
(F_A)_j = (\lambda')^2\bigl((F_A)_i\circ \lambda\bigr) + e\,S(\lambda)I,
\]
where $S(\lambda)$ is the Schwarzian derivative of $\lambda$, and $I$ is the identity endomorphism.
\end{lemma}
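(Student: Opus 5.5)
The plan is a direct computation from the transformation rule in Definition~\ref{def:quantum-connection}, tracking how the derivative term and the quadratic term in Definition~\ref{def:quantum-curvature} each respond to the inhomogeneous piece $e\lambda''/\lambda'$. Fix an input $f$ on the overlap and abbreviate $a:=A_i(f\circ\lambda^{-1})\circ\lambda$, i.e.\ $a=(A_i\circ\lambda)(f)$. Then the connection law reads
\[
A_j(f)=\lambda' a + e\,\frac{\lambda''}{\lambda'}I .
\]
The key point is that $a$ is just an $\End(V)$-valued meromorphic function of $\zeta$ on a Zariski open set. Hence the quantum nature of $A$ plays no role once $f$ is fixed, and all identities are understood generically as the paper stipulates.

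\textbf{Step 1 (derivative term).} Differentiate, using the chain rule $a'=\lambda'\,\bigl((A_i(\cdot)')\circ\lambda\bigr)$, where $A_i(\cdot)'$ is the $z$-derivative of $A_i(f\circ\lambda^{-1})$. This gives
\[
A_j(f)'=(\lambda')^2\bigl(A_i(\cdot)'\circ\lambda\bigr)+\lambda'' a+e\Bigl(\frac{\lambda''}{\lambda'}\Bigr)'I .
\]

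\textbf{Step 2 (quadratic term).} Square, noting that $I$ commutes with $a$, so no ordering issues arise in the non-abelian setting:
\[
A_j(f)^2=(\lambda')^2a^2+2e\lambda'' a+e^2\Bigl(\frac{\lambda''}{\lambda'}\Bigr)^2 I .
\]
Multiplying by $\frac{1}{2e}$ turns the cross term into exactly $\lambda'' a$.

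\textbf{Step 3 (combine).} In $A_j(f)'-\frac{1}{2e}A_j(f)^2$ the cross terms $\lambda'' a$ cancel. This cancellation is where the normalization $\frac{1}{2e}$ in the curvature is forced. What remains is
\[
(\lambda')^2\Bigl(A_i(\cdot)'-\tfrac{1}{2e}A_i(\cdot)^2\Bigr)\circ\lambda
+e\Bigl[\Bigl(\frac{\lambda''}{\lambda'}\Bigr)'-\tfrac12\Bigl(\frac{\lambda''}{\lambda'}\Bigr)^2\Bigr]I .
\]
The first term is $(\lambda')^2\bigl((F_A)_i\circ\lambda\bigr)(f)$. The bracket in the second term is $S(\lambda)$ by the formula for the Schwarzian recalled in Section~\ref{sec:classical-ode-schwarzian}. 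This yields the claimed law.

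The only delicate step is Step 1: I must correctly interpret the transported operator $(A_i\circ\lambda)$. The derivative of $(A_i\circ\lambda)(f)$ must be identified with $\lambda'\cdot\bigl((A_i(f\circ\lambda^{-1}))'\circ\lambda\bigr)$, so that the derivative of the transported connection is the transport of the derivative, with one factor $\lambda'$. I would also check that the domains match, since the Zariski open sets are transported by the biholomorphism $\lambda$.
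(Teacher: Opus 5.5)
Your proposal is correct and follows the paper's proof essentially verbatim: differentiate the connection law, square it using that the anomaly term $e\frac{\lambda''}{\lambda'}I$ is scalar, cancel the cross terms $\lambda''(A_i\circ\lambda)$ via the normalization $\frac{1}{2e}$, and recognize the remaining bracket as $S(\lambda)$. Fixing an input $f$ and writing the chain-rule factor $\lambda'$ explicitly makes precise the step that the paper writes simply as $A_i'\circ\lambda$.
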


\begin{proof}
By the defining transformation rule for a quantum connection,
\[
A_j \;=\; \lambda'\,(A_i\circ \lambda)\;+\; e\,\frac{\lambda''}{\lambda'}\,I.
\]
Differentiating (with respect to the local coordinate on $U_j$) gives
\[
A_j'
\;=\;
\lambda''(A_i\circ\lambda) \;+\; (\lambda')^2(A_i'\circ\lambda)
\;+\; e\Bigl(\frac{\lambda''}{\lambda'}\Bigr)' I.
\]
Moreover, since $\frac{\lambda''}{\lambda'}I$ is scalar, it commutes with everything, and
\[
A_j^2
=
(\lambda')^2(A_i^2\circ\lambda)
+
2e\,\lambda''(A_i\circ\lambda)
+
e^2\Bigl(\frac{\lambda''}{\lambda'}\Bigr)^2 I.
\]
Substituting into the definition $(F_A)_j = A_j' - \frac{1}{2e}A_j^2$, the
$\lambda''(A_i\circ\lambda)$ terms cancel, yielding
\[
(F_A)_j
=
(\lambda')^2\Bigl( A_i' - \frac{1}{2e}A_i^2 \Bigr)\circ\lambda
\;+\;
e\left[\Bigl(\frac{\lambda''}{\lambda'}\Bigr)' - \frac12\Bigl(\frac{\lambda''}{\lambda'}\Bigr)^2\right]I.
\]
The bracketed expression is exactly $S(\lambda)$, so the claim follows. \qedhere
\end{proof}

\begin{remark}[Schwarzian as curvature]
Consider the scalar quantum $1$-differential $\Psi(f)=f'$. On the open set where $f'\neq 0$, the Maurer--Cartan construction yields the scalar quantum connection
$A(f)=f''/f'$. Its quantum curvature is
\[
F_A(f) = \left(\frac{f''}{f'}\right)' - \frac{1}{2}\left(\frac{f''}{f'}\right)^2 = S(f).
\]
Applying the previous lemma gives the chain rule
\[
S(f\circ \lambda) = (\lambda')^2\bigl(S(f)\circ \lambda\bigr) + S(\lambda).
\]
Since $S\!\left(\frac{az+b}{cz+d}\right)=0$ for every M\"obius transformation ($ad-bc\neq 0$), it follows that
$S\!\left(\frac{af+b}{cf+d}\right)=S(f)$ for every nonconstant meromorphic function $f$.
\end{remark}

\subsection*{Gauge action.}

\begin{definition}[Gauge group]\label{def:gauge-group}
Let $V$ be a finite-dimensional complex vector space. The gauge group $G$ consists of all classical invertible $0$-differentials with values in $\End(V)$,
i.e.\ meromorphic functions $g\in\mathcal{M}(X,\End(V))$ whose determinant is not identically zero.
\end{definition}

\begin{definition}[Gauge action]\label{def:gauge-action}
Let $A$ be a quantum connection with values in $\End(V)$ and eccentricity $e$. For $g\in G$ define
\[
g\bullet A := gAg^{-1} + g'g^{-1},
\]
i.e.\ locally $(g\bullet A)_i = g_iA_ig_i^{-1} + g'_ig_i^{-1}$.
\end{definition}

We recall that the appearance of an affine ``connection term'' in the transformation law
is classical; see Klein \cite[Chapter~III, \S7]{L32} for the corresponding discussion in the genus one setting.
\begin{proposition}
If $A$ is a quantum connection of eccentricity $e$, then $g\bullet A$ is again a quantum connection of eccentricity $e$.
Moreover, $(g,A)\mapsto g\bullet A$ defines an action of $G$.
\end{proposition}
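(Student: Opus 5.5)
The plan is to verify the two claims separately by direct computation on overlaps, using only the transformation rule of Definition~\ref{def:quantum-connection} and the fact that $g$ is a classical $0$-differential. First I fix the meaning of $g\bullet A$ as an operator. For $f\in\M_{\mathrm{gen}}(U_i,\End(V))$ set
\[
(g\bullet A)_i(f):=g_i\,A_i(f)\,g_i^{-1}+g_i'g_i^{-1}.
\]
Here conjugation acts pointwise on the output $A_i(f)$, and the second term is independent of $f$. Since $\det g$ is not identically zero, $g_i^{-1}$ is meromorphic off a discrete set. So $(g\bullet A)_i(f)$ is a well-defined element of $\M_{\mathrm{gen}}$, in the generic sense fixed in Section~\ref{sec:quantum-connections}.

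\textbf{Transformation law.} Because $g$ is a $0$-differential, on an overlap with $\lambda=\lambda_{ij}$ we have $g_j=g_i\circ\lambda$. The chain rule then gives $g_j'=\lambda'(g_i'\circ\lambda)$, and therefore
\[
g_j'g_j^{-1}=\lambda'\bigl((g_i'g_i^{-1})\circ\lambda\bigr).
\]
Next I check that transport commutes with conjugation, using the definition $(A_i\circ\lambda)(f)=A_i(f\circ\lambda^{-1})\circ\lambda$:
\[
\bigl((g_iA_ig_i^{-1})\circ\lambda\bigr)(f)=(g_i\circ\lambda)\,(A_i\circ\lambda)(f)\,(g_i\circ\lambda)^{-1}=g_j\,(A_i\circ\lambda)(f)\,g_j^{-1}.
\]
Now I substitute $A_j=\lambda'(A_i\circ\lambda)+e\frac{\lambda''}{\lambda'}I$ into $g_jA_jg_j^{-1}$. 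The scalar term $e\frac{\lambda''}{\lambda'}I$ commutes with $g_j$, so it passes through the conjugation unchanged. Adding the two pieces yields
\[
(g\bullet A)_j=\lambda'\bigl((g\bullet A)_i\circ\lambda\bigr)+e\frac{\lambda''}{\lambda'}I,
\]
which says that $g\bullet A$ is a quantum connection of the same eccentricity $e$.

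\textbf{Action.} For the identity element, $I\bullet A=A$ because $I'=0$. For compatibility, take $g,h\in G$ and expand:
\[
h\bullet(g\bullet A)=hgAg^{-1}h^{-1}+hg'g^{-1}h^{-1}+h'h^{-1}.
\]
On the other hand, the Leibniz rule gives $(hg)'(hg)^{-1}=h'h^{-1}+hg'g^{-1}h^{-1}$, so the right-hand side equals $(hg)\bullet A$. The product $hg$ lies in $G$ since its determinant is $\det h\det g\not\equiv0$, and $(hg)^{-1}=g^{-1}h^{-1}$ generically.

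The only step I expect to need care is the bookkeeping in the first claim: one must make explicit that $gAg^{-1}$ is pointwise conjugation of the output, so that it commutes with transport by $\lambda$. Everything else is the chain rule together with the observation that the anomaly term is scalar.
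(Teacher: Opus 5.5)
The paper states this proposition without proof, so there is no argument of its own to compare against. Your overlap computation is the natural direct verification and it is correct, including the action check via $(hg)'(hg)^{-1}=h'h^{-1}+hg'g^{-1}h^{-1}$.

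There is one convention you use silently. You read the anomaly term in $A_j(f)$ as the scalar function $e\lambda''/\lambda'$ times the identity \emph{endomorphism}. That is the reading the paper itself uses in the curvature lemma and in the Maurer--Cartan remark. It is not, however, the identity \emph{operator} $f\mapsto f$ that is literally written in Definition~\ref{def:quantum-connection}.

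This choice matters. With pointwise conjugation of outputs and the literal reading, the anomaly term would become $e\frac{\lambda''}{\lambda'}\,g_jfg_j^{-1}\neq e\frac{\lambda''}{\lambda'}f$, and the transformation law would fail. The identity-operator reading instead pairs consistently with the composition form $f\mapsto g_iA_i(g_i^{-1}f)+g_i'g_i^{-1}f$. So you should state your reading of $I$ explicitly, next to your stated choice of output conjugation.
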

\noindent
In particular, the curvature defined below is gauge covariant, while under change of coordinates it transforms
with a Schwarzian projective anomaly.

\subsection*{Vector bundles on Riemann surfaces}

Let $E\to X$ be a holomorphic vector bundle of rank $n$. Choose, for each $i$,
a holomorphic frame of $E$ over $U_i$, so that $E|_{U_i}\simeq U_i\times\mathbb{C}^n$.
On an overlap $U_i\cap U_j$ the corresponding transition function is a holomorphic map
\[
g_{ij}:U_i\cap U_j\longrightarrow \GL_n(\mathbb{C}),
\]
and a meromorphic section $s$ of $E$ is represented in these frames by meromorphic
functions $s_i\in \mathcal{M}(U_i,\mathbb{C}^n)$ satisfying
\[
s_j = g_{ji}\,(s_i\circ \lambda_{ij})
\qquad\text{on }U_i\cap U_j,
\]
where $g_{ji}=g_{ij}^{-1}$ and the composition with $\lambda_{ij}$ expresses the same point
in $U_i$-coordinates.

Recall the gauge action notation from Definition~\ref{def:gauge-action}: for an invertible matrix-valued meromorphic function $g$ on $U_i\cap U_j$ we write
\[
g\bullet B := g\,B\,g^{-1} + g'g^{-1},
\]
for the usual gauge action on local operators (where $g$ acts by pointwise multiplication).

A \emph{quantum connection on $E$} of eccentricity $e\in\mathbb{C}$ is a family of local operators
\[
A_i:\M_{\mathrm{gen}}(U_i,\mathbb{C}^n)\dashrightarrow \M_{\mathrm{gen}}(U_i,\mathbb{C}^n)
\]
such that on every overlap $U_i\cap U_j$ one has, as operators on $\M_{\mathrm{gen}}(U_i\cap U_j,\mathbb{C}^n)$,
\[
A_j
=
g_{ji}\bullet\Bigl(\lambda'_{ij}(A_i\circ \lambda_{ij})+e\frac{\lambda''_{ij}}{\lambda'_{ij}}I\Bigr).
\]
Equivalently,
\[
A_j
=
g_{ji}\Bigl(\lambda'_{ij}(A_i\circ \lambda_{ij})+e\frac{\lambda''_{ij}}{\lambda'_{ij}}I\Bigr)g_{ij}
\;+\;
(g_{ji})'g_{ij}.
\]
Here $I$ is the identity operator, the matrices $g_{ij}$ act by pointwise multiplication,
and primes denote differentiation with respect to the local coordinate on $U_j$.

Similarly, for $m\in\mathbb{Z}$, a \emph{quantum $m$-differential on $E$} is a family of local operators
\[
\Psi_i:\M_{\mathrm{gen}}(U_i,\mathbb{C}^n)\dashrightarrow \M_{\mathrm{gen}}(U_i,\mathbb{C}^n)
\]
such that on overlaps
\[
\Psi_j = (\lambda'_{ij})^{m}\,g_{ji}\,(\Psi_i\circ \lambda_{ij})\,g_{ij}.
\]
More generally, if $E$ and $F$ are holomorphic vector bundles with transition functions
$g^{E}_{ij}$ and $g^{F}_{ij}$, one may allow operators
$\Psi_i:\M_{\mathrm{gen}}(U_i,\mathbb{C}^{\mathrm{rk}\,E})\dashrightarrow \M_{\mathrm{gen}}(U_i,\mathbb{C}^{\mathrm{rk}\,F})$
transforming by
\[
\Psi_j = (\lambda'_{ij})^{m}\,g^{F}_{ji}\,(\Psi_i\circ \lambda_{ij})\,g^{E}_{ij}.
\]
In the trivial-bundle case $E=X\times V$ (so $g_{ij}=I$), these definitions reduce to
Definitions~\ref{def:quantum-connection} and~\ref{def:quantum-m-differential}.

\paragraph{Example (Multiplication by an $\End(E)$-valued $m$-differential).}
Let $\omega$ be a classical meromorphic section of $K_X^{m}\otimes \End(E)$.
In local coordinates and frames it is given by meromorphic matrices $\omega_i$ satisfying
\[
\omega_j = (\lambda'_{ij})^{m}\,g_{ji}\,(\omega_i\circ \lambda_{ij})\,g_{ij}.
\]
Then multiplication by $\omega$,
\[
\Psi_i(f):=\omega_i f,
\]
defines a (classical) quantum $m$-differential on $E$.

\paragraph{Example (Classical connections on $E\otimes K_X^{-e}$).}
Assume $e\in\mathbb{Z}$ and suppose $E\otimes K_X^{-e}$ is equipped with a meromorphic connection.
In local coordinates and frames, it is represented by meromorphic matrices $a_i$ whose
transformation rule is precisely the coefficient version of the above formula for $A_j$.
Equivalently, the $\mathcal{O}_X$-linear operators $A_i(f):=a_i f$ form a bundle-valued quantum
connection on $E$ of eccentricity $e$.

\subsection*{The notion of equivalence of two connections.}

\begin{definition}\label{def:equivalence}
Let $A_1$ and $A_2$ be quantum connections on $X$ with the same eccentricity $e\neq 0$ and values in $\End(V)$.
We say that $A_1$ and $A_2$ are classically equivalent if there exists $g\in G$ such that
\[
F_{A_2} = gF_{A_1}g^{-1}.
\]
\end{definition}

\begin{remark}
Although $F_A$ is not globally tensorial, it transforms on overlaps by
\[
(F_A)_j = (\lambda')^2\bigl((F_A)_i\circ \lambda\bigr) + eS(\lambda)I.
\]
The anomaly term $eS(\lambda)I$ is scalar (proportional to $I$). Since elements of $G$
act by conjugation and scalar endomorphisms commute with all elements of $\End(V)$,
the relation $F_{A_2} = gF_{A_1}g^{-1}$ is independent of the choice of local coordinates.
Thus the definition is well defined.
\end{remark}

\section{Second order quantum ODE}\label{sec:quantum-second-order-ode}\label{sec:classical-quantum-second-order-ode}
We discuss second order quantum differential equations on a Riemann surface.
The starting point is the local scalar equation on a domain $U\subset\Cc$,
\[
y''=p\,y'+q\,y,
\]
whose coefficients depend on the chosen coordinate. In the quantum setting we take the
\emph{solution} to be a quantum $0$-differential $\Psi$, and we allow the coefficients to be
quantum tensors (connections and $2$-differentials) so that the notion becomes intrinsic on
$X$.

More precisely, on each chart $U_i$ and for each meromorphic function $f$ on $U_i$ we obtain a
meromorphic function $\Psi_i(f)$ (defined generically on a Zariski open subset) which satisfies
a second order ODE whose coefficients depend on $f$:
\[
\Psi_i(f)''=p_i(f)\,\Psi_i(f)'+q_i(f)\,\Psi_i(f).
\]
We first determine the transformation law of the coefficient operators $p_i$ and $q_i$ under
change of coordinates.

\begin{lemma}\label{lem:quantum-second-order-coordinates}
Let $\Psi$ be a quantum $0$-differential and suppose that on each chart $U_i$
it satisfies the local equation
\[
\Psi_i(f)'' = p_i(f)\,\Psi_i(f)' + q_i(f)\,\Psi_i(f),
\qquad f \in \M_{\mathrm{gen}}(U_i),
\]
for some operators $p_i$ and $q_i$.
Then on an overlap $U_i\cap U_j$ with change of coordinate
$\lambda=\lambda_{ij}$, the coefficients transform according to
\[
 p_j
 =
 \lambda'\,(p_i\circ \lambda)
 +
 \frac{\lambda''}{\lambda'}\,I,
 \qquad
 q_j
 =
 (\lambda')^2\,(q_i\circ \lambda),
\]
as operators on $\M_{\mathrm{gen}}(U_i\cap U_j)$.
In particular, $q$ is a quantum $2$-differential, while $p$ transforms as a
quantum connection of eccentricity $1$.
\end{lemma}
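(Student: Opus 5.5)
The approach is the chain rule, applied in the same way as in the proof of the curvature lemma above. Fix an overlap with $\lambda=\lambda_{ij}$ and an input $f\in\M_{\mathrm{gen}}(U_i\cap U_j)$, written in $U_j$-coordinates. Since $\Psi$ is a quantum $0$-differential, we have $\Psi_j(f)=\Psi_i(f\circ\lambda^{-1})\circ\lambda$. Write $h:=\Psi_i(f\circ\lambda^{-1})$, a function in the $U_i$-coordinate, so that $\Psi_j(f)=h\circ\lambda$. The hypothesis on chart $U_i$, applied to the input $f\circ\lambda^{-1}$, gives
\[
h''=p_i(f\circ\lambda^{-1})\,h'+q_i(f\circ\lambda^{-1})\,h .
\]
Composing with $\lambda$ turns the coefficients into $(p_i\circ\lambda)(f)$ and $(q_i\circ\lambda)(f)$, by the definition of the transported operator.

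Next I differentiate $\Psi_j(f)=h\circ\lambda$ twice with respect to the $U_j$-coordinate:
\[
\Psi_j(f)'=\lambda'(h'\circ\lambda),\qquad \Psi_j(f)''=(\lambda')^2(h''\circ\lambda)+\lambda''(h'\circ\lambda).
\]
I substitute the transported ODE for $h''\circ\lambda$, and then eliminate $h'\circ\lambda=\Psi_j(f)'/\lambda'$ and $h\circ\lambda=\Psi_j(f)$. This yields
\[
\Psi_j(f)''=\Bigl(\lambda'(p_i\circ\lambda)(f)+\frac{\lambda''}{\lambda'}\Bigr)\Psi_j(f)'+(\lambda')^2(q_i\circ\lambda)(f)\,\Psi_j(f),
\]
which exhibits coefficients of exactly the claimed form.

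The main obstacle is justifying that these coefficients \emph{are} $p_j$ and $q_j$. The lemma takes $p_j,q_j$ as given operators satisfying the chart-$j$ equation, and a single second-order equation does not determine its coefficients in general. I would handle this by reading the statement as asserting that the displayed operators are the coefficients in chart $j$, i.e.\ that the family $\{p_i,q_i\}$ is compatible, which is the intended meaning. If genuine uniqueness is needed, I would subtract the two equations for $\Psi_j(f)$. This gives $(p_j-\tilde p_j)\Psi_j(f)'+(q_j-\tilde q_j)\Psi_j(f)=0$. In the scalar setting this does not force equality unless the coefficients are determined by $\Psi$, for instance via the Wronskian of two solutions, as in Proposition~\ref{prop:classical-schwarzian-ode}. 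So I would state the uniqueness assumption, or the choice of coefficients, explicitly.

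Finally, I read off the conclusions by comparison with the definitions. The law $q_j=(\lambda')^2(q_i\circ\lambda)$ is Definition~\ref{def:quantum-m-differential} with $m=2$. The law for $p$ is Definition~\ref{def:quantum-connection} with $e=1$. Everything holds generically, on a Zariski open set where all the expressions are defined.
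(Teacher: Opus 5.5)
Your argument is the same chain-rule computation as the paper's proof. You write $\Psi_j(f)$ as $Y\circ\lambda$ with $Y=\Psi_i(f\circ\lambda^{-1})$, differentiate twice, substitute the chart-$i$ equation, and eliminate $Y'\circ\lambda=\Psi_j(f)'/\lambda'$. Your caveat is also well taken: a single equation does not pin down its coefficients, and the paper's proof makes the identification of the transported operators with $p_j,q_j$ tacitly; uniqueness only becomes available in settings such as Theorem~\ref{thm:period-Aq-transformation}, where $(\omega,\omega')$ is a frame.
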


\begin{proof}
All computations are performed on a Zariski open subset of $U_i\cap U_j$ on
which the relevant expressions are defined.
Since $\Psi$ is a quantum $0$-differential, on $U_i\cap U_j$ we have
\[
\Psi_j = \Psi_i \circ \lambda,
\qquad
\Psi_j(f)=\bigl(\Psi_i(f\circ \lambda^{-1})\bigr)\circ \lambda.
\]
Fix $f\in\M(U_j)$ and set $g=f\circ \lambda^{-1}\in\M(U_i)$
and $Y:=\Psi_i(g)$. Then
\[
\Psi_j(f)=Y\circ\lambda.
\]
Differentiating with respect to the $U_j$-coordinate gives
\[
\Psi_j(f)'=(Y'\circ\lambda)\lambda',
\qquad
\Psi_j(f)''
=
(Y''\circ\lambda)(\lambda')^2
+
(Y'\circ\lambda)\lambda''.
\]
Using the equation on $U_i$,
\[
Y''=p_i(g)Y'+q_i(g)Y,
\]
we obtain
\[
\begin{aligned}
\Psi_j(f)''
&=
(\lambda')^2\,(p_i(g)\circ\lambda)(Y'\circ\lambda)
+
(\lambda')^2\,(q_i(g)\circ\lambda)(Y\circ\lambda)
+
(Y'\circ\lambda)\lambda''.
\end{aligned}
\]
Since $Y\circ\lambda=\Psi_j(f)$ and
$Y'\circ\lambda = \Psi_j(f)'/\lambda'$, we conclude
\[
\Psi_j(f)''
=
\Bigl(
\lambda'(p_i(g)\circ\lambda)
+
\frac{\lambda''}{\lambda'}
\Bigr)\Psi_j(f)'
+
(\lambda')^2(q_i(g)\circ\lambda)\,\Psi_j(f).
\]
Replacing $g$ by $f\circ\lambda^{-1}$ yields the stated operator
transformation laws.
\end{proof}

\begin{remark}
By Lemma~\ref{lem:quantum-second-order-coordinates}, the coefficient $p$
transforms as a quantum connection of eccentricity $1$. Hence we may view
$p$ itself as a connection and form the corresponding covariant derivative
$\nabla_p$ acting on quantum differentials.

Since $\Psi$ is a quantum $0$-differential, one has
\[
(\nabla_p \Psi)(f)=\Psi(f)',
\]
and therefore
\[
(\nabla_p^2 \Psi)(f)
=
(\nabla_p(\nabla_p \Psi))(f)
=
\Psi(f)''-p(f)\Psi(f)'.
\]
Consequently, the local equation
\[
\Psi_i(f)''=p_i(f)\Psi_i(f)'+q_i(f)\Psi_i(f)
\]
may be rewritten intrinsically as
\[
\nabla_p^2 \Psi = q\,\Psi,
\]
which is independent of the choice of local coordinate.
\end{remark}

\begin{example}[A basic quantum second-order equation]
\label{ex:quantum-second-order-ode}
Let $U_i$ be a chart with coordinate $z$, and let
\[
A(f)=\frac{f''}{f'}
\]
be the scalar quantum connection (of eccentricity $1$) on $U_i$.
Let
\[
B(f)=(f')^2
\]
be the quantum $2$-differential given by squaring the first derivative.
Fix a constant $\omega\in\mathbb C$, and set
\[
p=A,
\qquad
q=\omega^2 B.
\]
Consider the quantum second-order equation
\[
\Psi_i(f)'' = p_i(f)\,\Psi_i(f)' + q_i(f)\,\Psi_i(f),
\qquad f\in \M(U_i).
\]
Writing $Y(z)=\Psi_i(f)(z)$, the equation becomes
\[
Y'' - \frac{f''}{f'}Y' - \omega^2 (f')^2 Y = 0.
\]
Set $t=f(z)$ and write $Y(z)=\widetilde Y(t)$. Then
\[
Y' = f' \widetilde Y',
\qquad
Y'' = f'' \widetilde Y' + (f')^2 \widetilde Y'',
\]
so the equation reduces to
\[
(f')^2\bigl(\widetilde Y'' - \omega^2 \widetilde Y\bigr)=0.
\]
Since $f'\neq 0$ on the domain of definition, we obtain the constant--coefficient
ODE
\[
\widetilde Y'' = \omega^2 \widetilde Y.
\]
Thus the general local solution is
\[
\widetilde Y(t)=C_1 e^{\omega t}+C_2 e^{-\omega t},
\]
and hence
\[
\Psi_i(f)(z)
=
C_1 e^{\omega f(z)} + C_2 e^{-\omega f(z)}.
\]
\end{example}
In Part~V we interpret the above differential equation as a (quantum) mass--spring system with frequency $\omega$.
\subsection*{Gauge invariance}

Unfortunately the quantum curvature $F_A$ of a quantum connection is not gauge
invariant. This defect becomes particularly transparent when the eccentricity
is $e=\tfrac12$, since in that case we may consider the second order quantum ODE
\[
\Psi'' = 2A\Psi' + q\Psi,
\]
where $\Psi$ is a quantum $0$-differential and $q$ is a quantum $2$-differential.
Equivalently, since for $e=\tfrac12$ and $m=0$ one has
\[
(\nabla_A^2\Psi)(f)=\Psi(f)''-2A(f)\Psi(f)',
\]
the equation may be written intrinsically as $\nabla_A^2\Psi=q\Psi$.
\subsection{Main Lemma}
We are now ready to state our main lemma. Suppose we have an \emph{invertible} solution $g$ of this
equation with values in $\End(V)$, then the corresponding gauge transform of $A$
has curvature $F_A-q$ up to conjugation.

\begin{lemma}\label{lem:gauge-curvature-ehalf}
Let $A$ be a quantum connection of eccentricity $e=\tfrac12$ with values in
$\End(V)$, and let $q$ be an $\End(V)$-valued quantum $2$-differential.
Let $g$ be an $\End(V)$-valued quantum $0$-differential such that for every chart
$U_i$ and every $f\in \M(U_i)$ the meromorphic function
$g_i(f)\in \M(U_i,\End(V))$ satisfies $\det(g_i(f))\not\equiv 0$ on $U_i$.
Assume that for every $i$ and every $f\in \M(U_i)$ one has, on the open subset
where $g_i(f)$ is pointwise invertible,
\[
g_i(f)'' \;=\; 2A_i(f)\,g_i(f)' \;+\; q_i(f)\,g_i(f).
\]

Consider the gauge-transformed connection $g^{-1}\bullet A$ (in the sense of
Section~3). Explicitly, on $U_i$ and for $f\in\M(U_i)$ one has
\[
(g^{-1}\bullet A)_i(f)
\;=\;
g_i(f)^{-1}A_i(f)g_i(f)\;-\;g_i(f)^{-1}g_i(f)'.
\]
Then the quantum curvature satisfies, pointwise in $f$,
\[
F_{g^{-1}\bullet A,\, i}(f)
\;=\;
g_i(f)^{-1}\bigl(F_{A,\, i}(f)-q_i(f)\bigr)g_i(f),
\]
where for $e=\tfrac12$ one has $F_{A,\, i}(f)=A_i(f)'-A_i(f)^2$.
\end{lemma}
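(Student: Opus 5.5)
The identity is pointwise in $f$ and chart by chart, so I fix $i$ and $f$ and suppress them from the notation. I write $A=A_i(f)$, $g=g_i(f)$, $q=q_i(f)$, all meromorphic $\End(V)$-valued functions of the local coordinate. I work on the Zariski open set where $g$ is invertible and everything is defined. There the statement reduces to a purely algebraic identity about matrix-valued functions. No coordinate-change argument is needed, since both sides are defined chartwise.

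\textbf{Step 1 (first derivative).} Set $B:=g^{-1}Ag-g^{-1}g'$. For $e=\tfrac12$ the curvature is $F_B=B'-B^2$. Differentiate $B$ using $(g^{-1})'=-g^{-1}g'g^{-1}$ and write $C:=g^{-1}g'$. This gives
\[
B'=-C\,g^{-1}Ag+g^{-1}A'g+g^{-1}Ag\,C+C^2-g^{-1}g''.
\]

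\textbf{Step 2 (use the equation).} Substitute $g''=2Ag'+qg$, so that $g^{-1}g''=2g^{-1}Ag\,C+g^{-1}qg$. Hence
\[
B'=g^{-1}(A'-q)g-C\,g^{-1}Ag-g^{-1}Ag\,C+C^2.
\]

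\textbf{Step 3 (the quadratic term).} Write $\tilde A:=g^{-1}Ag$, so that $B=\tilde A-C$ and
\[
B^2=\tilde A^2-\tilde A C-C\tilde A+C^2.
\]
Subtracting, the cross terms $-C\tilde A-\tilde A C$ and the $C^2$ terms cancel, leaving
\[
F_B=g^{-1}(A'-q)g-\tilde A^2=g^{-1}(A'-A^2-q)g.
\]

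The only delicate point is the bookkeeping of noncommuting products in Steps 2 and 3. The factor $2$ in $2Ag'$, which reflects $e=\tfrac12$, is exactly what makes the cross terms cancel. The main thing to track is the order of factors, since $A$ and $C$ need not commute. Finally, I note that the result holds generically, on the Zariski open set where $\det g\neq0$, in the sense fixed in Section~\ref{sec:quantum-connections}.
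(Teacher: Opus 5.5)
Your proof is correct and takes essentially the same approach as the paper. Both expand $B'$ and $B^2$ directly using $(g^{-1})'=-g^{-1}g'g^{-1}$. The only cosmetic difference is that you substitute $g''=2Ag'+qg$ before subtracting $B^2$, whereas the paper first reduces to $g^{-1}(A'-A^2)g-g^{-1}(g''-2Ag')$ and only then invokes the equation.
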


\begin{proof}
Fix a chart $U_i$ and an input $f\in \M(U_i)$. On the open subset where
$g:=g_i(f)$ is pointwise invertible, write for brevity
\[
A:=A_i(f),\qquad q:=q_i(f),\qquad 
\widetilde A := (g^{-1}\bullet A)_i(f)=g^{-1}Ag-g^{-1}g'.
\]
Since $e=\tfrac12$, the curvature is $F_A=A'-A^2$, and hence
\[
F_{\widetilde A}=(\widetilde A)'-\widetilde A^{\,2}.
\tag{1}
\]

We first expand the derivative term. Using $(g^{-1})'=-g^{-1}g'g^{-1}$, we obtain
\[
\begin{aligned}
(\widetilde A)'
&=(g^{-1}Ag-g^{-1}g')' \\
&= -g^{-1}g'g^{-1}Ag + g^{-1}A'g + g^{-1}Ag'
   + g^{-1}g'g^{-1}g' - g^{-1}g''.
\end{aligned}
\tag{2}
\]
Next we expand the square:
\[
\begin{aligned}
\widetilde A^{\,2}
&=(g^{-1}Ag-g^{-1}g')^2 \\
&= g^{-1}A^2g + g^{-1}g'g^{-1}g' - g^{-1}Ag' - g^{-1}g'g^{-1}Ag .
\end{aligned}
\tag{3}
\]
Subtracting (3) from (2) gives
\[
F_{\widetilde A}
= g^{-1}A'g-g^{-1}A^2g - g^{-1}(g''-2Ag').
\tag{4}
\]
By hypothesis, $g$ satisfies $g''=2Ag'+qg$, so $(g''-2Ag')=qg$ and therefore
\[
F_{\widetilde A}=g^{-1}(A'-A^2)g-g^{-1}qg = g^{-1}(F_A-q)g.
\tag{5}
\]
This proves the claim.
\end{proof}

\subsection*{The quantum Schwarzian.}
In the scalar case, Proposition~\ref{prop:classical-schwarzian-ode} shows that the
Schwarzian of the ratio of two independent solutions is
\[
S\Bigl(\frac{y_1}{y_2}\Bigr)=2\bigl(p'-p^2-q\bigr)=2(F_p-q).
\]
This motivates the following definition in the non-abelian quantum setting.

\begin{definition}\label{def:quantum-schwarzian}
Let $A$ be a quantum connection of eccentricity $e=\tfrac12$ with values in
$\End(V)$, and let $q$ be an $\End(V)$-valued quantum $2$-differential.
For the quantum second-order equation
\begin{equation}\label{eq:quantum-second-order-ehalf}
\Psi''=2A\Psi' + q\Psi
\qquad\bigl(\text{equivalently, }\nabla_A^2\Psi=q\Psi\bigr),
\end{equation}
we define its \emph{quantum Schwarzian} to be the $\End(V)$-valued quantity
\[
\mathcal{S}_{A,q}:=2\,(F_A-q).
\]
\end{definition}

\begin{remark}
The quantum Schwarzian is the natural analogue of the classical invariant in
Proposition~\ref{prop:classical-schwarzian-ode}.
On an overlap $U_i\cap U_j$ with change of coordinate $\lambda=\lambda_{ij}$,
the curvature of an eccentricity $e=\tfrac12$ connection satisfies the
transformation law
\[
F_{A,j}=(\lambda')^2\,(F_{A,i}\circ\lambda)+\tfrac12 S(\lambda)\,I,
\]
while $q$ transforms tensorially as a quantum $2$-differential. Hence
\[
\mathcal{S}_{A,q,j}=(\lambda')^2\,(\mathcal{S}_{A,q,i}\circ\lambda)+S(\lambda)\,I.
\]
In particular, $\mathcal{S}_{A,q}$ has the \emph{same anomaly} under coordinate
change as the curvature (up to the normalizing factor~$2$).

Lemma~\ref{lem:gauge-curvature-ehalf} explains the gauge--theoretic meaning of
$\mathcal{S}_{A,q}$. Namely, if $g$ is an invertible $\End(V)$-valued solution of
\eqref{eq:quantum-second-order-ehalf}, then
\[
2F_{g^{-1}\bullet A}=g^{-1}\,\mathcal{S}_{A,q}\,g.
\]
Thus the quantum Schwarzian is precisely the curvature of the gauge-transformed
connection $g^{-1}\bullet A$, up to conjugation. In the scalar case
($V=\mathbb{C}$) this reduces to gauge invariance, and it is the direct quantum
analogue of the fact that $S(y_1/y_2)$ is independent of the choice of a basis of
solutions.
\end{remark}

\section{The gauge and $\star$ actions}\label{sec:star-action}

Let $A$ be a quantum connection of eccentricity $e=\tfrac12$ with values in
$\End(V)$, and let $q$ be an $\End(V)$-valued quantum $2$-differential. The
second-order equation \eqref{eq:quantum-second-order-ehalf} depends on the choice
of the pair $(A,q)$, while its quantum Schwarzian $\mathcal{S}_{A,q}=2(F_A-q)$ is
a more intrinsic quantity. We now describe a natural action of quantum
$1$-differentials on such pairs which preserves $F_A-q$ (and hence
$\mathcal{S}_{A,q}$).

Compare the scalar Miura transformation for affine and projective connections in
Frenkel~\cite{L37}, \S8.5, pp.~95--96. To the best of our knowledge, however,
the following matrix-valued affine $\star$-action on pairs $(A,q)$ is new.

\begin{definition}\label{def:star-action}
Let $u$ be an $\End(V)$-valued quantum $1$-differential on $X$. We define
\[
u\star (A,q):=(A+u,\; q-uA-Au-u^2+u'),
\]
where the expression is understood chartwise: on $U_i$ and for
$f\in \M_{\mathrm{gen}}(U_i)$,
\[
(u\star(A,q))_i(f)=\bigl(A_i(f)+u_i(f),\; q_i(f)-u_i(f)A_i(f)-A_i(f)u_i(f)-u_i(f)^2+u_i(f)'\bigr),
\]
and all identities are interpreted on a common Zariski open subset on which the
relevant expressions are defined.
\end{definition}

\begin{lemma}\label{lem:star-is-action}
The operation $\star$ defines an action of the additive group of $\End(V)$-valued
quantum $1$-differentials on the set of pairs $(A,q)$ with $e$.
Equivalently, for quantum $1$-differentials $u$ and $v$ one has
\[
v\star(u\star(A,q))=(u+v)\star(A,q).
\]
\end{lemma}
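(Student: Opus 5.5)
The plan has two parts. First I check that $\star$ sends admissible pairs to admissible pairs. Then I verify the composition law by direct expansion, using the identity element $u=0$.

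\emph{Well-definedness.} Let $u$ be a quantum $1$-differential, so $u_j=\lambda'(u_i\circ\lambda)$. Then $A+u$ satisfies the same affine law as $A$, namely $A_j+u_j=\lambda'\bigl((A_i+u_i)\circ\lambda\bigr)+\tfrac12\frac{\lambda''}{\lambda'}I$. Hence $A+u$ is again a quantum connection of eccentricity $\tfrac12$.

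For the second component I will not expand $q-uA-Au-u^2+u'$ term by term. Instead I use the identity
\[
F_{A+u}-\bigl(q-uA-Au-u^2+u'\bigr)=F_A-q .
\]
This follows at once from $F_{A+u}=A'+u'-A^2-Au-uA-u^2$, which is the curvature formula for $e=\tfrac12$. By the curvature transformation lemma, $F_A$ and $F_{A+u}$ carry the same anomaly $\tfrac12 S(\lambda)I$. So the new second component equals $F_{A+u}-F_A+q$, a difference of two quantities with equal anomaly plus the $2$-differential $q$. It is therefore a quantum $2$-differential.

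This identity also shows that $\star$ preserves $F_A-q$, and hence the quantum Schwarzian. All equalities are taken on a common Zariski open set on which $A_i(f)$, $u_i(f)$ and $q_i(f)$ are defined.

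\emph{Action property.} Since $u=0$ clearly acts as the identity, it remains to check $v\star(u\star(A,q))=(u+v)\star(A,q)$. The first components agree: both equal $A+u+v$.

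For the second components I use the invariant $F_A-q$ again. By the identity above, the second component of $w\star(B,r)$ is determined by $B+w$ and $F_B-r$ alone: it equals $F_{B+w}-(F_B-r)$. Applying this twice gives the second component of $v\star(u\star(A,q))$ as $F_{A+u+v}-(F_A-q)$. Applying it once to $(u+v)\star(A,q)$ gives the same expression, so the two sides coincide.

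As a sanity check, I can also expand $q-uA-Au-u^2+u'-v(A+u)-(A+u)v-v^2+v'$ directly. The cross terms $-uv-vu$ are exactly what is needed to complete $-(u+v)^2$, even though $u$ and $v$ need not commute.

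The only delicate point is noncommutativity, in particular the placement of the terms $uA$ versus $Au$. The curvature identity handles this automatically. The domain-of-definition bookkeeping is routine under the paper's generic conventions.
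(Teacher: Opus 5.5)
Your proof is correct, but your main argument for the composition law takes a different route from the paper's. The paper simply expands $v\star(A+u,\,q-uA-Au-u^2+u')$ and regroups, noting that $-vu-uv$ completes $-(u+v)^2$. That is exactly the computation you relegate to a ``sanity check.''

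You instead pass through the identity $F_{A+u}-\bigl(q-uA-Au-u^2+u'\bigr)=F_A-q$, which in the paper is the \emph{next} lemma (Lemma~\ref{lem:FA-q-invariant-star}). In effect you change coordinates $(A,q)\mapsto(A,F_A-q)$. In these coordinates $u\star$ is just $(A,S)\mapsto(A+u,S)$, so the action property becomes a tautology.

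Your route buys two things:
\begin{itemize}
\item The same identity gives well-definedness. It shows that $A+u$ is again an eccentricity-$\tfrac12$ connection and that the new second component is a genuine quantum $2$-differential, since $F_{A+u}$ and $F_A$ carry the same anomaly $\tfrac12 S(\lambda)I$. The paper's proof does not address this point.
\item It explains conceptually why $\star$ is the natural affine action preserving $\mathcal S_{A,q}$.
\end{itemize}

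The paper's route is purely algebraic in a noncommutative differential ring. For the composition law itself it needs neither the curvature formula nor the value $e=\tfrac12$. The eccentricity matters only for well-definedness: $u'-uA-Au$ picks up the term $(1-2e)\lambda''(u_i\circ\lambda)$ under coordinate change, so it is tensorial exactly when $e=\tfrac12$.

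You also correctly check the identity element $u=0$, which the paper's ``equivalently'' tacitly assumes.
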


\begin{proof}
This is a direct computation using bilinearity of the products and linearity of
the derivative. Writing $(\widetilde A,\widetilde q):=u\star(A,q)$, we have $\widetilde A=A+u$ and
$\widetilde q=q-uA-Au-u^2+u'$. Hence
\[
\begin{aligned}
v\star(\widetilde A,\widetilde q)
&=\bigl(A+u+v,\; q-uA-Au-u^2+u' -v(A+u)-(A+u)v-v^2+v'\bigr) \\
&=\bigl(A+u+v,\; q-(u+v)A-A(u+v)-(u+v)^2+(u+v)'\bigr) \\
&=(u+v)\star(A,q),
\end{aligned}
\]
as claimed.
\end{proof}

\begin{lemma}\label{lem:FA-q-invariant-star}
For $e=\tfrac12$ the quantum Schwarzian is invariant under the $\star$-action. In
other words,
\[
\mathcal{S}_{A,q}=\mathcal{S}_{u\star(A,q)}.
\]
\end{lemma}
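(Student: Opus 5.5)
The plan is a direct chartwise computation. Fix a chart $U_i$ and an input $f$, and work on a common Zariski open subset where $A:=A_i(f)$, $q:=q_i(f)$ and $u:=u_i(f)$ are all defined. Write $(\widetilde A,\widetilde q)=u\star(A,q)$, so that $\widetilde A=A+u$ and $\widetilde q=q-uA-Au-u^2+u'$. Since $e=\tfrac12$, Definition~\ref{def:quantum-curvature} gives $F_{\widetilde A}=\widetilde A'-\widetilde A^{\,2}$. It therefore suffices to show
\[
\widetilde A'-\widetilde A^{\,2}-\widetilde q=A'-A^2-q ,
\]
and then multiply by $2$ to compare with Definition~\ref{def:quantum-schwarzian}.

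Expand the left-hand side. The derivative term is $\widetilde A'=A'+u'$. The square must be expanded keeping the order of factors, since $A$ and $u$ are matrices:
\[
\widetilde A^{\,2}=A^2+Au+uA+u^2 .
\]
Subtracting, we get
\[
F_{\widetilde A}=A'-A^2+u'-Au-uA-u^2 .
\]
Next subtract $\widetilde q$. Its terms $-uA-Au-u^2+u'$ cancel exactly against the extra terms in $F_{\widetilde A}$, which leaves $A'-A^2-q=F_A-q$. Note that the definition of the $\star$-action was built precisely so that this cancellation happens, including the non-commutative cross terms $Au$ and $uA$ separately.

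Two remarks finish the argument.

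\textbf{The output is again a valid pair.} $A+u$ is a quantum connection of eccentricity $\tfrac12$, because adding a quantum $1$-differential does not change the inhomogeneous term $\tfrac12\frac{\lambda''}{\lambda'}I$. The invariance of $F_A-q$ itself, however, is a pointwise identity on each chart and needs no transformation law.

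\textbf{Where care is needed.} The only real subtlety is to keep the factor order in $\widetilde A^{\,2}$ and to interpret every identity generically, on a shared Zariski open set. Beyond that there is no genuine obstacle.
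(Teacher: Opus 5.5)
Your proposal is correct and is the paper's proof almost verbatim: expand $F_{A+u}=(A+u)'-(A+u)^2=F_A+u'-Au-uA-u^2$, keeping the factor order, then subtract the transformed $q$ to recover $F_A-q$. Your side remark that the output is a valid pair is not needed for the identity, and it only checks the connection component. If you want to complete it, note that $u'-uA-Au$ also transforms as a $2$-differential, because for $e=\tfrac12$ the $\lambda''(u\circ\lambda)$ terms cancel.
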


\begin{proof}
For $e=\tfrac12$ one has $F_A=A'-A^2$, hence
\[
F_{A+u}=(A+u)'-(A+u)^2 = F_A+u'-Au-uA-u^2.
\]
Subtracting the transformed $q$ gives
\[
F_{A+u}-\bigl(q-uA-Au-u^2+u'\bigr)=F_A-q,
\]
as desired.
\end{proof}

\subsection{Main Theorem}
\begin{theorem}[Main Theorem]
\label{lem:gauge-action-on-q}
Let $g\in G$ be a gauge transformation (Definition~\ref{def:gauge-group}). Let $A$
be an $\End(V)$-valued quantum connection of eccentricity $e=\tfrac12$ and let $q$
be an $\End(V)$-valued quantum $2$-differential. Define
\[
A^{g}:= g^{-1}\bullet A = g^{-1}Ag - g^{-1}g',
\qquad
q^{g}:= g^{-1}qg + 2\,g^{-1}A g' - g^{-1}g''.
\]
Then $A^{g}$ is again a quantum connection of eccentricity $e=\tfrac12$, and $q^{g}$
is an $\End(V)$-valued quantum $2$-differential. Moreover, if $\Psi$ is a (quantum)
$0$-differential solving
\[
\Psi'' = 2A\Psi' + q\Psi,
\]
then $\widetilde\Psi:=g^{-1}\Psi$ solves the transformed equation
\[
\widetilde\Psi'' = 2A^{g}\widetilde\Psi' + q^{g}\widetilde\Psi.
\]
In particular,
\[
F_{A^{g}}-q^{g} = g^{-1}(F_A-q)g,
\qquad\text{hence}\qquad
\mathcal{S}_{A^{g},q^{g}} = g^{-1}\mathcal{S}_{A,q}g.
\]
Consequently the coefficients of the characteristic polynomial of $\mathcal{S}_{A,q}$
(equivalently, the functions $\operatorname{tr}(\mathcal{S}_{A,q}^r)$) are invariants
of the gauge-equivalence class of the equation.
In particular, if $g$ itself is a (classical) $\GL(V)$-valued solution of
\(
 g''=2Ag'+qg,
\)
then $q^{g}=0$ and $F_{A^{g}}=g^{-1}(F_A-q)g$, recovering
Lemma~\ref{lem:gauge-curvature-ehalf}.
\end{theorem}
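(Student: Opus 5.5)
I will prove the statement in four pieces: the tensorial claims for $A^g$ and $q^g$, the transformed equation, the curvature identity, and the corollaries on invariants.

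\emph{Tensoriality.} That $A^g=g^{-1}\bullet A$ is a quantum connection of eccentricity $\tfrac12$ is the earlier Proposition on the gauge action applied to $g^{-1}\in G$. For $q^g$, I work on an overlap with change of coordinate $\lambda$, using that $g$ is a classical $0$-differential, so $g_j=g_i\circ\lambda$. Hence
\[
g_j'=\lambda'(g_i'\circ\lambda),\qquad g_j''=(\lambda')^2(g_i''\circ\lambda)+\lambda''(g_i'\circ\lambda).
\]
Substituting this and $A_j=\lambda'(A_i\circ\lambda)+\tfrac12\frac{\lambda''}{\lambda'}I$ into $2g^{-1}Ag'-g^{-1}g''$ gives
\[
(\lambda')^2\bigl(2g_i^{-1}A_ig_i'-g_i^{-1}g_i''\bigr)\circ\lambda
\]
plus the anomaly $g^{-1}\lambda''g'-g^{-1}\lambda''g'=0$. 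The eccentricity $\tfrac12$ is exactly what makes the coefficient $2\cdot\tfrac12$ cancel the $\lambda''$ term. Together with the tensoriality of $g^{-1}qg$, this shows that $q^g$ is a quantum $2$-differential.

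\emph{Transformed equation.} I will verify this by direct substitution rather than through a coordinate-free argument. Write $\Psi=g\widetilde\Psi$, so that
\[
\Psi'=g'\widetilde\Psi+g\widetilde\Psi',\qquad \Psi''=g''\widetilde\Psi+2g'\widetilde\Psi'+g\widetilde\Psi''.
\]
Inserting these into $\Psi''=2A\Psi'+q\Psi$ and multiplying on the left by $g^{-1}$ gives
\[
\widetilde\Psi''=2\bigl(g^{-1}Ag-g^{-1}g'\bigr)\widetilde\Psi'+\bigl(g^{-1}qg+2g^{-1}Ag'-g^{-1}g''\bigr)\widetilde\Psi,
\]
which is exactly $2A^g\widetilde\Psi'+q^g\widetilde\Psi$.

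\emph{Curvature identity.} Here I reuse the computation in the proof of Lemma~\ref{lem:gauge-curvature-ehalf}, specifically its step (4). That step never used the ODE hypothesis and gives
\[
F_{A^g}=g^{-1}(A'-A^2)g-g^{-1}(g''-2Ag').
\]
Subtracting $q^g$, the terms $-g^{-1}g''+2g^{-1}Ag'$ cancel and leave $F_{A^g}-q^g=g^{-1}(F_A-q)g$. Multiplying by $2$ gives the statement for $\mathcal S$.

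\emph{Invariants and the special case.} Conjugation preserves $\operatorname{tr}(\mathcal S^r)$ and the coefficients of the characteristic polynomial. The scalar anomaly $S(\lambda)I$ does not affect the claim, since it commutes with $g$. Finally, if $g''=2Ag'+qg$, then
\[
q^g=g^{-1}qg+2g^{-1}Ag'-g^{-1}(2Ag'+qg)=0,
\]
which recovers Lemma~\ref{lem:gauge-curvature-ehalf}.

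The main obstacle is the tensoriality of $q^g$. It requires tracking the $\lambda''$ terms carefully, and it is the only step where the choice $e=\tfrac12$ is essential. Everything else is routine algebra that has already been carried out in earlier lemmas.
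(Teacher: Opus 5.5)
Your proof is correct and follows the paper's argument. You use the same substitution $\Psi=g\widetilde\Psi$ for the transformed equation, and the same expansion for $F_{A^g}=g^{-1}F_Ag+2g^{-1}Ag'-g^{-1}g''$; step (4) of Lemma~\ref{lem:gauge-curvature-ehalf} indeed does not use the ODE. You then conclude by conjugation invariance.

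Your explicit check that the $\lambda''$-terms cancel in $2g^{-1}Ag'-g^{-1}g''$ is a genuine addition. The paper only justifies that $A^g$ is a connection and never verifies that $q^g$ is a quantum $2$-differential. One small correction to your closing remark: $e=\tfrac12$ is also essential in the curvature step, through $F_A=A'-A^2$, not only in the tensoriality check.
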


\begin{proof}
The first statement is immediate from the fact that the gauge action preserves the
class of quantum connections (apply Proposition following
Definition~\ref{def:gauge-action} to $g^{-1}$).
For the transformation of the equation, write $\Psi=g\widetilde\Psi$. Then
\[
\Psi' = g'\widetilde\Psi + g\widetilde\Psi',
\qquad
\Psi'' = g''\widetilde\Psi + 2g'\widetilde\Psi' + g\widetilde\Psi''.
\]
Substituting into $\Psi''=2A\Psi'+q\Psi$ and multiplying on the left by $g^{-1}$
yields
\[
\widetilde\Psi'' = 2(g^{-1}Ag-g^{-1}g')\,\widetilde\Psi' +
\bigl(g^{-1}qg + 2g^{-1}Ag' - g^{-1}g''\bigr)\,\widetilde\Psi,
\]
which is the claimed equation.

Finally, since $e=\tfrac12$ we have $F_A=A'-A^2$. A direct computation from
$A^{g}=g^{-1}Ag-g^{-1}g'$ gives
\[
F_{A^{g}} = g^{-1}F_A g + 2\,g^{-1}A g' - g^{-1}g'',
\]
and subtracting $q^{g}$ yields $F_{A^{g}}-q^{g}=g^{-1}(F_A-q)g$. Multiplying by
$2$ gives the stated transformation of $\mathcal{S}_{A,q}$.
Conjugation invariance of characteristic polynomial coefficients and traces is
immediate. 
\end{proof}

\begin{remark}\label{rem:curvature-not-gauge-invariant}
It is important that the transformation of $q$ in Theorem~\ref{lem:gauge-action-on-q}
is \emph{not} given by tensorial conjugation alone. In general the curvature
$F_A=A'-A^2$ is \emph{not} gauge covariant under the action
$A\mapsto g\bullet A=gAg^{-1}+g'g^{-1}$ when $A$ is $\End(V)$-valued and noncommutative.
The additional terms $2g^{-1}Ag'-g^{-1}g''$ in $q^{g}$ are exactly what is needed so
that the \emph{Schwarzian combination} $F_A-q$ transforms by conjugation, and hence
$\mathcal{S}_{A,q}=2(F_A-q)$ is the natural gauge-theoretic invariant of the second
order equation.

\end{remark}

\begin{lemma}[Compatibility of the gauge and $\star$ actions]\label{lem:gauge-star-compatible}
Let $A$ be an $\End(V)$-valued quantum connection of eccentricity $e=\tfrac12$ and
let $q$ be an $\End(V)$-valued quantum $2$-differential. Let $u$ be an
$\End(V)$-valued quantum $1$-differential and let $g\in G$ be a gauge
transformation. Set $u^{g}:=g^{-1}ug$, and let $(A^{g},q^{g})$ denote the gauge
transform of $(A,q)$ as in Theorem~\ref{lem:gauge-action-on-q}. Then the $\star$
and gauge actions are compatible in the sense that
\[
(u\star(A,q))^{g} \;=\; u^{g}\star(A^{g},q^{g}),
\]
where $(\cdot)^{g}$ denotes the gauge transformation of pairs given by
Theorem~\ref{lem:gauge-action-on-q}. Equivalently, gauge transformations send
$\star$-orbits of pairs $(A,q)$ to $\star$-orbits.
\end{lemma}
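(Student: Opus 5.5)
We verify the identity by direct computation, chartwise and pointwise in the input $f$, exactly as in the proofs of Lemma~\ref{lem:star-is-action} and Theorem~\ref{lem:gauge-action-on-q}. Fix a chart $U_i$ and $f\in\M_{\mathrm{gen}}(U_i)$. Work on a common Zariski open subset where $g$ is invertible and all expressions are defined, and suppress the indices $i$ and $f$. A preliminary remark: $u^{g}=g^{-1}ug$ is again an $\End(V)$-valued quantum $1$-differential, because $g$ is a classical $0$-differential and conjugation commutes with the transformation rule $\Psi_j=(\lambda')\,(\Psi_i\circ\lambda)$. Hence the right-hand side $u^{g}\star(A^{g},q^{g})$ makes sense.

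\textbf{First component.} By definition $u\star(A,q)=(A+u,\;q-uA-Au-u^2+u')$. Its gauge transform therefore has first component
\[
g^{-1}(A+u)g-g^{-1}g'=A^{g}+u^{g}.
\]
This is precisely the first component of $u^{g}\star(A^{g},q^{g})$.

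\textbf{Second component, left side.} Apply the formula for $q^{g}$ from Theorem~\ref{lem:gauge-action-on-q} with $A$ replaced by $A+u$ and $q$ by $\widehat q:=q-uA-Au-u^2+u'$. This gives
\[
g^{-1}\widehat q\,g+2g^{-1}(A+u)g'-g^{-1}g''
= q^{g}+g^{-1}(-uA-Au-u^2+u')g+2g^{-1}ug'.
\]

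\textbf{Second component, right side.} Expand $q^{g}-u^{g}A^{g}-A^{g}u^{g}-(u^{g})^2+(u^{g})'$ term by term, using $(g^{-1})'=-g^{-1}g'g^{-1}$:
\[
u^{g}A^{g}=g^{-1}uAg-g^{-1}ug',\qquad
A^{g}u^{g}=g^{-1}Aug-g^{-1}g'g^{-1}ug,\qquad
(u^{g})^2=g^{-1}u^2g,
\]
\[
(u^{g})'=-g^{-1}g'g^{-1}ug+g^{-1}u'g+g^{-1}ug'.
\]
When these are summed, the two terms $\pm g^{-1}g'g^{-1}ug$ cancel. The two terms $g^{-1}ug'$ (one from $-u^{g}A^{g}$, one from $(u^{g})'$) add up. The total is
\[
q^{g}+g^{-1}(-uA-Au-u^2+u')g+2g^{-1}ug',
\]
which agrees with the left side.

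The only delicate point is the bookkeeping of the $g'$-terms. The cross term $2g^{-1}Ag'$ built into $q^{g}$ is exactly what produces the doubled $2g^{-1}ug'$, so the noncommutative order of factors must be tracked carefully; no other input is needed.

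For the final sentence of the lemma, the $\star$-orbit of $(A,q)$ consists of the pairs $u\star(A,q)$ with $u$ ranging over quantum $1$-differentials. The identity just proved shows that gauge transformation maps this orbit into the orbit of $(A^{g},q^{g})$. Applying the same argument to $g^{-1}$, and using that $u\mapsto u^{g}$ is a bijection on quantum $1$-differentials, shows that the image is the whole orbit.
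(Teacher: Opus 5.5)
Your proof is correct, but for the second component it takes a different route from the paper.

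\textbf{Your route.} You expand both sides explicitly and handle the $g'$-terms by hand. The expansion checks out: both sides equal $q^{g}+g^{-1}(-uA-Au-u^2+u')g+2g^{-1}ug'$.

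\textbf{The paper's route.} The paper never expands the $q$-component. It uses the fact that a pair $(A,q)$ is determined by $A$ together with $F_A-q$.
\begin{itemize}
\item Both $(u\star(A,q))^{g}$ and $u^{g}\star(A^{g},q^{g})$ have first component $A^{g}+u^{g}$.
\item Both have $F-q$ equal to $g^{-1}(F_A-q)g$. For the first pair this follows from $\star$-invariance (Lemma~\ref{lem:FA-q-invariant-star}) and then gauge covariance (Theorem~\ref{lem:gauge-action-on-q}). For the second pair the same two facts are applied in the opposite order.
\end{itemize}
Hence the second components coincide.

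\textbf{Comparison.} The paper's argument is more conceptual. In the coordinates $(A,\mathcal{S}_{A,q})$, the $\star$-action translates $A$ and fixes $\mathcal{S}$, while the gauge action is affine in $A$ and acts on $\mathcal{S}$ by conjugation, so compatibility is forced. However, it relies on the identity $F_{A^{g}}=g^{-1}F_Ag+2g^{-1}Ag'-g^{-1}g''$ from the proof of Lemma~\ref{lem:gauge-curvature-ehalf}. That is the same kind of bookkeeping, done once elsewhere.

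Your computation is self-contained. It needs only $(g^{-1})'=-g^{-1}g'g^{-1}$ and does not involve curvature at all.

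You also supply two points the paper leaves implicit:
\begin{itemize}
\item $u^{g}$ is again a quantum $1$-differential, so the right-hand side is defined.
\item The image of a $\star$-orbit is the whole target orbit, because $u\mapsto u^{g}$ is a bijection.
\end{itemize}
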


\begin{proof}
Write $u\star(A,q)=(\widehat A,\widehat q)$, so $\widehat A=A+u$ and
$\widehat q=q-uA-Au-u^2+u'$. The first components agree:
\[
\widehat A^{g}=g^{-1}(A+u)g-g^{-1}g'=(g^{-1}Ag-g^{-1}g')+g^{-1}ug=A^{g}+u^{g}.
\]
For the second component, it is convenient to use the gauge-covariant quantity
$F_A-q$. By Lemma~\ref{lem:FA-q-invariant-star} one has
$F_{\widehat A}-\widehat q=F_A-q$, hence Theorem~\ref{lem:gauge-action-on-q} gives
\[
F_{\widehat A^{g}}-\widehat q^{g}=g^{-1}(F_A-q)g.
\]
On the other hand, Theorem~\ref{lem:gauge-action-on-q} implies
$F_{A^{g}}-q^{g}=g^{-1}(F_A-q)g$. Apply Lemma~\ref{lem:FA-q-invariant-star} to the
pair $(A^{g},q^{g})$ and the $1$-differential $u^{g}$ to write
$u^{g}\star(A^{g},q^{g})=(A^{g}+u^{g},\widehat q_g)$ with
\[
F_{A^{g}+u^{g}}-\widehat q_g=F_{A^{g}}-q^{g}=g^{-1}(F_A-q)g.
\]
Since $\widehat A^{g}=A^{g}+u^{g}$, comparing the two displayed identities shows
$\widehat q^{g}=\widehat q_g$, which proves $(u\star(A,q))^{g}=u^{g}\star(A^{g},q^{g})$.
\end{proof}

\begin{remark}
The action $\star$ is transitive on the choice of the connection component: if
$A_1$ and $A_2$ are two eccentricity $e=\tfrac12$ quantum connections, then
$u:=A_2-A_1$ is a quantum $1$-differential and $A_2=A_1+u$. Thus $\star$
parametrizes different presentations $(A,q)$ of the same quantum Schwarzian.
In the classical scalar case this recovers the familiar fact that a projective
connection can be represented by an affine connection modulo addition of a
$1$-form.
\end{remark}

\begin{theorem}\label{lem:second-order-equivalence}
Let $A$ be a quantum connection of eccentricity $e=\tfrac12$ with values in
$\End(V)$, and let $q$ be an $\End(V)$-valued quantum $2$-differential.
Suppose $g$ is an $\End(V)$-valued quantum $0$-differential such that
$\det(g_i(f))\not\equiv 0$ for every chart $U_i$ and every $f\in \M(U_i)$, and
assume that $g$ satisfies the second-order equation
\[
\nabla_A^2 g = qg
\qquad
\bigl(\text{equivalently, } g'' = 2Ag' + qg \bigr).
\]
Then for any two invertible solutions $g_1,g_2$ of this equation, the
connections $g_1^{-1}\bullet A$ and $g_2^{-1}\bullet A$ are classically equivalent.
\end{theorem}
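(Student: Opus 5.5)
The plan is to reduce the statement to two applications of the Main Lemma (Lemma~\ref{lem:gauge-curvature-ehalf}), followed by an explicit choice of the conjugating gauge element. The main point is to interpret that element in the sense required by Definition~\ref{def:equivalence}. As in that lemma, all identities are read chartwise, pointwise in the input $f$, on a common Zariski open subset.

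\emph{Step 1.} Apply Lemma~\ref{lem:gauge-curvature-ehalf} separately to $g_1$ and to $g_2$. Both are invertible solutions of $g''=2Ag'+qg$, so on each chart $U_i$ and for each $f$ we get
\[
F_{g_k^{-1}\bullet A,\,i}(f)=g_{k,i}(f)^{-1}\bigl(F_{A,i}(f)-q_i(f)\bigr)g_{k,i}(f),\qquad k=1,2.
\]
Equivalently this is the case $q^{g_k}=0$ of Theorem~\ref{lem:gauge-action-on-q}. Solving the $k=1$ identity for $F_A-q$ and substituting into the $k=2$ identity gives
\[
F_{g_2^{-1}\bullet A}=h\,F_{g_1^{-1}\bullet A}\,h^{-1},\qquad h:=g_2^{-1}g_1 .
\]

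\emph{Step 2.} Show that $h$ is an admissible gauge element, i.e.\ that it lies in $G$ in the sense of Definition~\ref{def:gauge-group}, read with the paper's generic conventions. First, $h$ is an $\End(V)$-valued $0$-differential. Since $g_{k,j}=g_{k,i}\circ\lambda_{ij}$ for $k=1,2$, and composition with $\lambda_{ij}$ commutes with pointwise products and inverses, we get $h_j=h_i\circ\lambda_{ij}$. Second, $h$ is invertible: $\det h_i(f)=\det g_{1,i}(f)/\det g_{2,i}(f)\not\equiv0$. Hence for each fixed input $f$, the matrix $h_i(f)$ is a meromorphic $\End(V)$-valued function with nonvanishing determinant on a Zariski open set. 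Third, the $h_i$ glue, because both $g_k$ are $0$-differentials. So for each fixed input, $h$ is an element of $G$ (modulo discrete sets), and the identity of Step 1 is exactly the relation $F_{A_2}=hF_{A_1}h^{-1}$ of Definition~\ref{def:equivalence}.

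\emph{Step 3 (consistency check).} The anomaly term $\tfrac12S(\lambda)I$ is scalar, so conjugation by $h$ is compatible with coordinate changes. This is the remark after Definition~\ref{def:equivalence}, and it shows the relation is chart-independent. In the classical situation where $g_1,g_2$ are $f$-independent, $\mathcal O$-linear solutions, $h$ is literally a global meromorphic invertible matrix, so $h\in G$ with no reinterpretation.

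The main obstacle is Step 2. In the genuinely quantum case $h$ depends on $f$, and a single $f$-independent element of $G$ conjugating the two curvatures cannot be expected in general. I would therefore state explicitly that equivalence is understood in the same generic, pointwise-in-$f$ sense as all other identities in the paper. This means fixing an input (or, as in Remark on the quantum Schwarzian, comparing conjugacy classes of $\mathcal S_{A,q}$), which is what the proof can honestly deliver. In the classical case the statement holds verbatim.
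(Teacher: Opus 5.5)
Your Step~1 is exactly the paper's proof: two applications of Lemma~\ref{lem:gauge-curvature-ehalf} together with the conjugating element $h=g_2^{-1}g_1$. The paper writes the same identity as $F_{g_1^{-1}\bullet A}=h^{-1}F_{g_2^{-1}\bullet A}h$.

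Your Steps~2--3 address a point the paper passes over silently: in the genuinely quantum case $h$ depends on the input $f$, so it is not literally an element of $G$ as in Definition~\ref{def:gauge-group}. Your pointwise-in-$f$ reading of classical equivalence is the one the paper's one-line proof implicitly relies on. One small caveat: the gluing in Step~2 requires a global input $f$ on $X$, since for $f\in\M(U_i)$ the function $h_i(f)$ lives on $U_i$ alone.
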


\begin{proof}
By Lemma~\ref{lem:gauge-curvature-ehalf}, for any solution $g$ one has
\[
F_{g^{-1}\bullet A} = g^{-1}(F_A-q)g.
\]
If $g_1$ and $g_2$ are solutions, set $h:=g_2^{-1}g_1$ on the open subset where
both are invertible. Then
\[
F_{g_1^{-1}\bullet A} = h^{-1}F_{g_2^{-1}\bullet A}h,
\]
which is precisely the condition of classical equivalence.
\end{proof}

\begin{corollary}\label{cor:charpoly-well-defined}
Let $A$ be a quantum connection of eccentricity $e=\tfrac12$ with values in
$\End(V)$, and let $q$ be an $\End(V)$-valued quantum $2$-differential.
Let $g_1$ and $g_2$ be two invertible (classical) $\mathrm{GL}(V)$-valued solutions of
\[
g''=2Ag'+qg.
\]
Then the connections $g_1^{-1}\bullet A$ and $g_2^{-1}\bullet A$ are equivalent
in the sense of Definition~\ref{def:equivalence}. In particular, on each chart $U_i$ the
coefficients of the characteristic polynomial of $F_{g^{-1}\bullet A,\,i}$
are independent of the choice of an invertible $\mathrm{GL}(V)$-valued solution
$g$ of $g''=2Ag'+qg$.

Moreover, since $F_{g^{-1}\bullet A}$ satisfies the same Schwarzian
coordinate--change anomaly as any quantum curvature, these coefficients define
invariants of the equation on $X$ in the same sense as the curvature (namely,
their local expressions transform consistently on overlaps).
\end{corollary}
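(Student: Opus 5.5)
The plan is to deduce the corollary directly from Lemma~\ref{lem:gauge-curvature-ehalf} and Theorem~\ref{lem:second-order-equivalence}, together with the anomaly computation for quantum curvature. First, for each invertible solution $g_k$ ($k=1,2$), Lemma~\ref{lem:gauge-curvature-ehalf} gives chartwise
\[
F_{g_k^{-1}\bullet A,\,i}=g_{k,i}^{-1}\bigl(F_{A,i}-q_i\bigr)g_{k,i}.
\]
Setting $h:=g_2^{-1}g_1$, we then obtain
\[
F_{g_1^{-1}\bullet A}=h^{-1}F_{g_2^{-1}\bullet A}\,h.
\]
To match Definition~\ref{def:equivalence}, we take $h^{-1}=g_1^{-1}g_2$ as the conjugating element. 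Since $g_1,g_2$ are classical $\GL(V)$-valued $0$-differentials, $h$ is a classical invertible $0$-differential, i.e.\ an element of $G$. Its determinant is not identically zero because it is a product of determinants that are generically nonzero, and all identities are read on a common Zariski open set. This gives the equivalence claim, and is exactly the argument of Theorem~\ref{lem:second-order-equivalence}.

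Second, for the characteristic polynomial we fix a chart $U_i$. The conjugation identity above gives
\[
\det\bigl(tI-F_{g_1^{-1}\bullet A,\,i}\bigr)=\det\bigl(tI-F_{g_2^{-1}\bullet A,\,i}\bigr)
\]
pointwise on the Zariski open set where $h$ is invertible, hence as elements of $\M_{\mathrm{gen}}$. In fact both sides equal the characteristic polynomial of $F_{A,i}-q_i$, which makes independence of $g$ transparent. This is also the cleanest formulation: the invariants are simply those of $\tfrac12\mathcal{S}_{A,q}$.

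Third, for the behaviour on overlaps, we use the curvature anomaly lemma with $e=\tfrac12$:
\[
F_j=(\lambda')^2(F_i\circ\lambda)+\tfrac12 S(\lambda)I.
\]
We must also check that the gauge transform by a classical $0$-differential commutes with transport by $\lambda$. For $g$ this means $g_j=g_i\circ\lambda$ and $g_j'=\lambda'(g_i'\circ\lambda)$, which together with the Proposition on the gauge action show that $g^{-1}\bullet A$ is again an eccentricity-$\tfrac12$ connection. Hence $F_{g^{-1}\bullet A}$ obeys the same anomaly. Consequently the characteristic polynomial on $U_j$ is obtained from that on $U_i$ by the substitution $t\mapsto (\lambda')^2 t+\tfrac12 S(\lambda)$ up to the factor $(\lambda')^{2n}$, because the anomaly is scalar. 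Concretely, the shifted coefficients transform consistently, and the trace-free parts (e.g.\ $\operatorname{tr}(F^2)-\tfrac1n(\operatorname{tr}F)^2$) are genuine $4$-differentials.

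The main obstacle is interpretive rather than computational. The phrase ``transform consistently'' must be made precise, since the coefficients themselves are not tensorial: the scalar Schwarzian shift mixes them. I would state it as above, via the substitution rule on the variable $t$, and note that the anomaly being a multiple of $I$ is precisely what makes this substitution well defined independent of $g$.
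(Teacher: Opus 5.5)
Your argument is the paper's: the main lemma gives $F_{g_k^{-1}\bullet A}=g_k^{-1}(F_A-q)g_k$, so $h=g_2^{-1}g_1\in G$ conjugates one curvature into the other, and conjugate matrices have the same characteristic polynomial. (In the orientation $F_{A_2}=gF_{A_1}g^{-1}$ of the definition the conjugator is $h$ itself rather than $h^{-1}$; this is immaterial since the relation is symmetric.) Your treatment of the overlap clause goes beyond what the paper does and is correct: you use $\det\bigl(((\lambda')^2 s+\tfrac12 S(\lambda))I-F_j\bigr)=(\lambda')^{2n}\det\bigl(sI-F_i\circ\lambda\bigr)$ and note that the trace-free combinations are genuine $4$-differentials, which makes precise what the paper only asserts.
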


\begin{proof}
Set $h:=g_2^{-1}g_1$ on the open subset where both solutions are pointwise
invertible. By Lemma~\ref{lem:second-order-equivalence} one has
\[
F_{g_1^{-1}\bullet A}=h^{-1}F_{g_2^{-1}\bullet A}h.
\]
By Definition~\ref{def:equivalence} this conjugacy of curvatures implies that the connections
$g_1^{-1}\bullet A$ and $g_2^{-1}\bullet A$ are equivalent.

Finally, $F_{g_1^{-1}\bullet A}$ and $F_{g_2^{-1}\bullet A}$ are pointwise
conjugate matrices, hence have the same characteristic polynomial. This proves
that the coefficients of the characteristic polynomial of $F_{g^{-1}\bullet A}$
do not depend on the choice of $g$.
\end{proof}

\begin{remark}\label{rem:quantum-characteristic-invariants}
Let $g$ be an invertible (classical) $\mathrm{GL}(V)$-valued solution of the
quantum second-order equation
\[
g''=2Ag'+qg,
\]
and set
\[
B:=F_{g^{-1}\bullet A}.
\]
By Corollary~\ref{cor:charpoly-well-defined}, the conjugacy class of $B$ is
independent of the choice of the solution $g$. Hence, for every integer
$1\le r\le \dim(V)$, the functions $\operatorname{tr}(B^r)$ (equivalently, the
coefficients of the characteristic polynomial of $B$) are well-defined
invariants of the differential equation, independent of $g$.

Moreover, Lemma~\ref{lem:gauge-curvature-ehalf} gives $B=g^{-1}(F_A-q)g$, and
therefore for every $1\le r\le \dim(V)$ one has
\[
\operatorname{tr}(B^r)=\operatorname{tr}\bigl((F_A-q)^r\bigr).
\]
In particular, these invariants are well defined on each chart and on $X$ (in
the same sense as the curvature): under a change of coordinate the curvature
acquires a Schwarzian anomaly, while $q$ transforms tensorially, so $F_A-q$ (and
hence $B$) has the same Schwarzian anomaly as $F_A$.

Following Chern, we call the quantities $\operatorname{tr}(B^r)$ (or,
equivalently, the coefficients of the characteristic polynomial of $B$) the
\emph{quantum characteristic invariants} of the equation.
\end{remark}

\part{Modularity}

\setcounter{section}{10}

\section{Modular forms and modular connections}\label{sec:modular-forms}

Any hyperbolic Riemann surface, and in particular any Riemann surface of genus greater than one, can be realized as a quotient
\[
X = \Gamma\backslash \Hh,
\]
where $\Hh$ is the upper half-plane and $\Gamma\subset \SL_2(\mathbb{R})$ is a Fuchsian subgroup acting properly discontinuously.

Recall that an element
\[
\gamma=\begin{pmatrix} a & b \\ c & d\end{pmatrix}\in \SL_2(\mathbb{R})
\]
acts on $z\in\Hh$ by the M\"obius transformation
\[
\gamma(z) = \frac{az+b}{cz+d}.
\]
Thus a meromorphic $m$-differential $f(z)(dz)^{m}$ on $X$ corresponds to a meromorphic function $f$ on $\Hh$ such that
\[
f(\gamma(z))\,(d\gamma(z))^{m} = f(z)\,dz^{m}\qquad (\gamma\in \Gamma).
\]
Since $d\gamma(z)=(cz+d)^{-2}dz$, we must have
\begin{equation}\label{eq:4.1}
f(\gamma z) = (cz+d)^{2m}f(z).
\end{equation}

Throughout this section, for a meromorphic function $h$ on $\Hh$ we write
$h':=Dh:=\frac{1}{2\pi i}\frac{dh}{dz}$. For a M\"obius transformation
$\gamma(z)=\frac{az+b}{cz+d}$ we continue to write $\gamma'(z)$ and $\gamma''(z)$
for the ordinary derivatives with respect to $z$ (so $\gamma'(z)=(cz+d)^{-2}$).

Motivated by \eqref{eq:4.1}, for any integer $k$ and complex vector space $V$ we set
\[
M_k(\Gamma,V):=
\left\{\psi:\Hh\to V\ \text{meromorphic} : \psi(\gamma z) = (cz+d)^{k}\psi(z)
\ \text{for all }\gamma\in\Gamma\right\}.
\]
We refer to elements of $M_k(\Gamma,V)$ as $V$-valued meromorphic modular forms
of weight $k$ for $\Gamma$.

In particular, weight $0$ forms are precisely $\Gamma$-invariant meromorphic functions, hence
\[
M_0(\Gamma,\mathbb{C})\cong \mathcal{M}(X),
\]
and more generally $M_0(\Gamma,V)$ identifies with meromorphic $V$-valued functions on $X$.

\begin{remark}[Classical $m$-differentials]
Let $V$ be finite-dimensional. A classical meromorphic $m$-differential on $X$ with values in $\End(V)$ may be represented on $\Hh$ as an $\End(V)$-valued meromorphic function $\phi$ such that $\phi(z)(dz)^{m}$ is $\Gamma$-invariant. Equivalently,
\[
\phi(\gamma z) = (cz+d)^{2m}\phi(z),\qquad \gamma\in \Gamma,
\]
so $\phi\in M_{2m}(\Gamma,\End(V))$. Conversely, every $\phi\in M_{2m}(\Gamma,\End(V))$ determines a classical $\End(V)$-valued $m$-differential on $X$.
\end{remark}

\subsection{Global quantum m-differentials}

\begin{definition}
A global quantum $m$-differential on $X=\Gamma\backslash \Hh$ with values in $V$ is a (linear) map
\[
\Psi : M_0(\Gamma,V)\longrightarrow M_{2m}(\Gamma,V).
\]
\end{definition}

This definition captures the basic idea that a quantum $m$-differential should send meromorphic functions on $X$ (weight $0$) to meromorphic $m$-differentials on $X$ (weight $2m$).

To recover the local notion of quantum $m$-differential from Section~2 using an atlas whose transition maps lie in $\Gamma$, one may impose the following stronger equivariance condition.

\begin{definition}
Let $m\in\mathbb{Z}$. For a linear operator
\[
\Psi:\mathcal{M}(\Hh,V)\longrightarrow \mathcal{M}(\Hh,V)
\]
and $\gamma\in\Gamma$, define the transformed operator
\[
(\Psi|_{m}\gamma)(f) := (\gamma')^{m}\bigl(\Psi(f\circ \gamma^{-1})\bigr)\circ \gamma.
\]
We say that $\Psi$ is $\Gamma$-equivariant of weight $m$ if
\[
\Psi|_{m}\gamma = \Psi \qquad \text{for all }\gamma\in\Gamma.
\]
\end{definition}

\begin{proposition}
If $\Psi$ is $\Gamma$-equivariant of weight $m$, then it induces a global quantum $m$-differential in the sense of the first definition, i.e.
\[
\Psi\bigl(M_0(\Gamma,V)\bigr)\subseteq M_{2m}(\Gamma,V).
\]
\end{proposition}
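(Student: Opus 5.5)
Fix $f\in M_0(\Gamma,V)$ and $\gamma=\begin{pmatrix} a&b\\ c&d\end{pmatrix}\in\Gamma$; the plan is to show directly that $\Psi(f)(\gamma z)=(cz+d)^{2m}\Psi(f)(z)$. The only input is the equivariance identity $\Psi|_m\gamma=\Psi$ evaluated at $f$. Since $f$ has weight $0$, it is $\Gamma$-invariant: $f\circ\gamma=f$. Because $\Gamma$ is a group, $\gamma^{-1}\in\Gamma$ as well, so also $f\circ\gamma^{-1}=f$. This is the observation that makes the argument work.

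Substituting into the definition,
\[
\Psi(f)=(\Psi|_m\gamma)(f)=(\gamma')^{m}\bigl(\Psi(f\circ\gamma^{-1})\bigr)\circ\gamma=(\gamma')^{m}\,\bigl(\Psi(f)\circ\gamma\bigr).
\]
Using $\gamma'(z)=(cz+d)^{-2}$ and solving for $\Psi(f)\circ\gamma$ gives $\Psi(f)(\gamma z)=(cz+d)^{2m}\Psi(f)(z)$. This is exactly the weight $2m$ transformation law, so $\Psi(f)\in M_{2m}(\Gamma,V)$, provided $\Psi(f)$ is meromorphic on $\Hh$. Meromorphy is automatic because $\Psi$ maps $\mathcal M(\Hh,V)$ to itself by hypothesis.

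There is no serious obstacle here. The only points needing care are bookkeeping. First, the derivative convention: the section writes $h'=\frac{1}{2\pi i}\frac{dh}{dz}$ for functions, but $\gamma'$ is stipulated to be the ordinary derivative $(cz+d)^{-2}$, so no factor of $2\pi i$ enters. Second, the $(cz+d)$ factor should be evaluated at $z$ and not at $\gamma z$. If $\Psi$ is only generically defined, the identity holds on a Zariski open subset and extends by meromorphic continuation.
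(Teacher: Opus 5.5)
Your proof is correct and follows essentially the same argument as the paper. Like the paper, you evaluate $\Psi|_m\gamma=\Psi$ at a $\Gamma$-invariant $f$, use $f\circ\gamma^{-1}=f$, and invert $(\gamma')^{m}=(cz+d)^{-2m}$ to obtain $\Psi(f)(\gamma z)=(cz+d)^{2m}\Psi(f)(z)$. Your extra bookkeeping remarks (the ordinary-derivative convention for $\gamma'$, and meromorphy coming from $\Psi:\mathcal{M}(\Hh,V)\to\mathcal{M}(\Hh,V)$) are accurate.
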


\begin{proof}
Let $f\in M_0(\Gamma,V)$, so $f\circ \gamma^{-1}=f$. Since $\Psi|_{m}\gamma=\Psi$, we have
\[
\Psi(f) = (\Psi|_{m}\gamma)(f) = (\gamma')^{m}\,\Psi(f)\circ \gamma.
\]
Equivalently,
\[
\Psi(f)\circ \gamma = (\gamma')^{-m}\Psi(f) = (cz+d)^{2m}\Psi(f),
\]
because $\gamma'(z)=(cz+d)^{-2}$ for $\gamma\in \SL_2(\mathbb{R})$. Thus $\Psi(f)\in M_{2m}(\Gamma,V)$. \qedhere
\end{proof}

\begin{remark}
In the classical situation, multiplication by an $\End(V)$-valued $m$-differential provides an example of the above. Indeed, if $\phi\in M_{2m}(\Gamma,\End(V))$, then
\[
(\Psi_{\phi}f)(z):=\phi(z)f(z)
\]
defines a $\Gamma$-equivariant operator of weight $m$, hence a global quantum $m$-differential.
\end{remark}
\subsection{Quantum modular connections}
Let $e\in\mathbb{C}$ and let $V$ be a finite-dimensional complex vector space. Recall that for
\[
\gamma=\begin{pmatrix}a&b\\ c&d\end{pmatrix}\in \SL_2(\mathbb{R})
\]
one has
\[
\gamma(z)=\frac{az+b}{cz+d},\qquad \gamma'(z)=\frac{1}{(cz+d)^{2}}.
\]

\begin{definition}
A quantum modular connection for $\Gamma$ with values in $V$ and eccentricity $e$ is a linear operator
\[
A:\mathcal{M}(\Hh,V)\longrightarrow \mathcal{M}(\Hh,V)
\]
such that for every $\gamma\in\Gamma$ one has
\[
A = A\|_{e}\gamma,
\]
where the transformed operator $A\|_{e}\gamma$ is defined by
\[
(A\|_{e}\gamma)(f) := \gamma'\bigl(A(f\circ \gamma^{-1})\bigr)\circ \gamma + \frac{e}{2\pi i}\frac{\gamma''}{\gamma'}f,\qquad f\in\mathcal{M}(\Hh,V).
\]
\end{definition}

\begin{remark}
If $A$ is a quantum modular connection and $f\in M_0(\Gamma,V)$ is $\Gamma$-invariant, then $A(f)$ transforms by the inhomogeneous rule
\[
A(f)(\gamma z) = (cz+d)^{2}A(f)(z) + \frac{2e}{2\pi i}c(cz+d)f(z),\qquad
\gamma=\begin{pmatrix}a&b\\ c&d\end{pmatrix}\in \Gamma,
\]
since $\gamma'(z)^{-1}=(cz+d)^2$ and $-\gamma''(z)/\gamma'(z)^{2}=2c(cz+d)$. Thus $A$ descends to a connection-type operator on $X=\Gamma\backslash \Hh$.
\end{remark}

\subsection{Classical modular connections}

\begin{definition}
A classical modular connection for $\Gamma$ with values in $\End(V)$ and eccentricity $e$ is a meromorphic function
\[
a:\Hh\longrightarrow \End(V)
\]
such that for every $\gamma=\begin{pmatrix}a&b\\ c&d\end{pmatrix}\in\Gamma$,
\[
a(\gamma z) = (cz+d)^{2}a(z) + \frac{2e}{2\pi i}c(cz+d)I,
\]
where $I\in \End(V)$ denotes the identity endomorphism.
\end{definition}

\begin{remark}
If $a_1$ and $a_2$ are classical modular connections of the same eccentricity $e$, then their difference is a weight $2$ matrix modular form:
\[
(a_2-a_1)(\gamma z) = (cz+d)^{2}(a_2-a_1)(z),
\]
so $a_2-a_1\in M_2(\Gamma,\End(V))$. Thus the space of classical modular connections of eccentricity $e$ is an affine space (a torsor) over $M_2(\Gamma,\End(V))$.
\end{remark}

\begin{proposition}
Let $a$ be a classical modular connection for $\Gamma$ with values in $\End(V)$ and eccentricity $e$. Define an operator
\[
A:\mathcal{M}(\Hh,V)\longrightarrow \mathcal{M}(\Hh,V),\qquad A(f):=a\cdot f.
\]
Then $A$ is a quantum modular connection in the sense of the quantum modular connection definition. Moreover, viewed on $X=\Gamma\backslash \Hh$, it is $\mathcal{O}_X$-linear.
\end{proposition}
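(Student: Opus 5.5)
The plan is to verify directly the defining identity $A = A\|_{e}\gamma$ for every $\gamma\in\Gamma$, using only the transformation rule of $a$ and the explicit derivatives of a M\"obius map. Fix $\gamma=\begin{pmatrix}a&b\\ c&d\end{pmatrix}\in\Gamma$ (I will write the matrix entries with a different letter, say $\alpha,\beta,c,d$, to avoid a clash with the function $a$). For $f\in\mathcal{M}(\Hh,V)$ we compute
\[
(A\|_{e}\gamma)(f)(z)=\gamma'(z)\,a(\gamma z)\,(f\circ\gamma^{-1})(\gamma z)+\frac{e}{2\pi i}\frac{\gamma''(z)}{\gamma'(z)}f(z)
=\gamma'(z)\,a(\gamma z)f(z)+\frac{e}{2\pi i}\frac{\gamma''(z)}{\gamma'(z)}f(z).
\]
Thus the operator $A\|_{e}\gamma$ is again multiplication by a matrix function, and it suffices to show this function equals $a(z)$.

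Next I substitute the defining relation $a(\gamma z)=(cz+d)^2a(z)+\frac{2e}{2\pi i}c(cz+d)I$ together with $\gamma'(z)=(cz+d)^{-2}$ and $\gamma''(z)=-2c(cz+d)^{-3}$, so that $\gamma''/\gamma'=-2c/(cz+d)$. Then
\[
\gamma'(z)a(\gamma z)=a(z)+\frac{2e}{2\pi i}\frac{c}{cz+d}I,
\]
and adding $\frac{e}{2\pi i}\cdot\frac{-2c}{cz+d}I$ cancels the inhomogeneous term exactly, leaving $a(z)$. Hence $A\|_{e}\gamma=A$. The only point requiring care is the normalization: the $\frac{1}{2\pi i}$ factors and the sign of $\gamma''/\gamma'$ must match between the two definitions, and this cancellation is the step I would double-check (it is also consistent with the Remark preceding the definition of classical modular connections, which derives the same inhomogeneous law from the quantum one).

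Finally, $\mathcal{O}_X$-linearity is immediate: $A(hf)=a\,hf=h\,A(f)$ for any scalar meromorphic $h$, in particular for $\Gamma$-invariant $h\in M_0(\Gamma,\mathbb{C})\cong\mathcal{M}(X)$, so on $X$ the operator is multiplication by the coefficient, i.e.\ classical in the sense of the earlier Example on classical connections with values in $\End(V)$. Linearity of $A$ over $\Cc$ is likewise trivial.
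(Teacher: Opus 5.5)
Your proof is correct and follows the paper's argument. Like the paper, you reduce $A\|_{e}\gamma$ to multiplication by $\gamma'(a\circ\gamma)+\frac{e}{2\pi i}\frac{\gamma''}{\gamma'}I$, and you then use $\gamma'=(cz+d)^{-2}$ and $\gamma''/\gamma'=-2c/(cz+d)$ to show that this function equals $a$. Your explicit cancellation keeps the $\frac{1}{2\pi i}$ factor throughout; the paper's intermediate display drops it and only says ``after rewriting,'' so your version is more careful on this point.
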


\begin{proof}
Let $\gamma\in\Gamma$ and $f\in\mathcal{M}(\Hh,V)$. Since $A(f\circ \gamma^{-1}) = a(f\circ \gamma^{-1})$, we have
\[
\gamma'\bigl(A(f\circ \gamma^{-1})\bigr)\circ \gamma = \gamma'(a\circ \gamma)f.
\]
Hence
\[
(A\|_{e}\gamma)(f) = \bigl(\gamma'(a\circ \gamma) + \frac{e}{2\pi i}\frac{\gamma''}{\gamma'}I\bigr)f.
\]
Thus $A=A\|_{e}\gamma$ for all $\gamma$ is equivalent to
\[
a(z) = \gamma'(z)a(\gamma z) + e\frac{\gamma''(z)}{\gamma'(z)}I,
\]
which, after rewriting, is precisely the defining transformation rule for $a$.
The $\mathcal{O}_X$-linearity is immediate. \qedhere
\end{proof}

\subsection{Modular covariant derivative}

Let $e\neq 0$ and let $a:\Hh\to \End(V)$ be a classical modular connection for $\Gamma$ of eccentricity $e$.

\begin{definition}
For $\psi\in M_k(\Gamma,V)$ we define the modular covariant derivative associated to $a$ by
\[
\nabla_a(\psi) := \psi' - \frac{k}{2e}a\psi,
\]
where $' = D$ is the normalized derivation $D=\frac{1}{2\pi i}\frac{d}{dz}$ and $a$ acts on $V$ via the natural action of $\End(V)$.
\end{definition}

\begin{proposition}
The operator in the definition above is well-defined and satisfies
\[
\nabla_a : M_k(\Gamma,V)\longrightarrow M_{k+2}(\Gamma,V).
\]
\end{proposition}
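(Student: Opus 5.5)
The plan is a direct verification of the weight-$(k+2)$ transformation law. This is the modular counterpart of the Proposition following Definition~\ref{def:quantum-covariant-derivative}, with the weight $k$ playing the role of $2m$. Well-definedness is immediate: $\psi'$ and $a\psi$ are meromorphic $V$-valued functions on $\Hh$. So the only content is that, for $\psi\in M_k(\Gamma,V)$ and every $\gamma=\begin{pmatrix}a&b\\ c&d\end{pmatrix}\in\Gamma$,
\[
(\nabla_a\psi)(\gamma z)=(cz+d)^{k+2}(\nabla_a\psi)(z).
\]

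First I will compute the transformation of $D\psi$. Apply $D=\frac{1}{2\pi i}\frac{d}{dz}$ to both sides of $\psi(\gamma z)=(cz+d)^k\psi(z)$. The left side gives $(D\psi)(\gamma z)\,\gamma'(z)=(D\psi)(\gamma z)(cz+d)^{-2}$ by the chain rule. The right side gives $(cz+d)^kD\psi(z)+\frac{kc}{2\pi i}(cz+d)^{k-1}\psi(z)$. Hence
\[
(D\psi)(\gamma z)=(cz+d)^{k+2}D\psi(z)+\frac{kc}{2\pi i}(cz+d)^{k+1}\psi(z),
\]
so $D\psi$ is a weight $k+2$ form up to an anomaly term proportional to $c(cz+d)^{k+1}\psi$.

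Second, I will use the defining rule for a classical modular connection,
\[
a(\gamma z)=(cz+d)^2a(z)+\frac{2e}{2\pi i}c(cz+d)I,
\]
together with the weight-$k$ law for $\psi$, to get
\[
\frac{k}{2e}a(\gamma z)\psi(\gamma z)=(cz+d)^{k+2}\frac{k}{2e}a(z)\psi(z)+\frac{kc}{2\pi i}(cz+d)^{k+1}\psi(z).
\]
Subtracting this from the first display, the two anomaly terms cancel exactly. That cancellation is the reason for the normalization $\frac{k}{2e}$, and it leaves the desired law. The factor $2e$ (rather than $e$) matches the convention that weight $k$ corresponds to $m=k/2$-differentials.

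The only delicate point is bookkeeping of the normalization. Because $D$ carries the factor $\frac{1}{2\pi i}$, the connection anomaly must carry the same factor, and it does, since the definition of a classical modular connection includes $\frac{2e}{2\pi i}$. One should also check that the chain-rule factor is $\gamma'=(cz+d)^{-2}$, using $ad-bc=1$. No genuine obstacle is expected beyond this constant tracking.
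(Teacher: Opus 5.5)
Your proposal is correct and follows essentially the same route as the paper: you differentiate the weight-$k$ law to get the anomaly $\frac{kc}{2\pi i}(cz+d)^{k+1}\psi$, use the connection's transformation rule to produce the identical anomaly from $\frac{k}{2e}a\psi$, and observe that the two cancel. Your constant tracking (the $\frac{1}{2\pi i}$ normalization of $D$ and the chain-rule factor $\gamma'=(cz+d)^{-2}$) agrees with the paper's computation.
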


\begin{proof}
Let $\gamma=\begin{pmatrix}a&b\\ c&d\end{pmatrix}\in\Gamma$. Since $\psi\in M_k(\Gamma,V)$, we have
\[
\psi(\gamma z) = (cz+d)^{k}\psi(z).
\]
Differentiating and using $\gamma'(z)=(cz+d)^{-2}$ yields
\[
\psi'(\gamma z) = (cz+d)^{k+2}\psi'(z) + \frac{k}{2\pi i}c(cz+d)^{k+1}\psi(z).
\]
On the other hand, the transformation rule for $a$ gives
\[
a(\gamma z)\psi(\gamma z) = (cz+d)^{k+2}a(z)\psi(z) + \frac{2e}{2\pi i}c(cz+d)^{k+1}\psi(z).
\]
Therefore
\[
(\nabla_a\psi)(\gamma z) = \psi'(\gamma z) - \frac{k}{2e}a(\gamma z)\psi(\gamma z)
= (cz+d)^{k+2}\left(\psi'(z) - \frac{k}{2e}a(z)\psi(z)\right),
\]
since the terms proportional to $c(cz+d)^{k+1}\psi(z)$ cancel. Thus $\nabla_a\psi\in M_{k+2}(\Gamma,V)$. \qedhere
\end{proof}

\subsection{The Serre derivative}

We now specialize to $\Gamma=\SL_2(\mathbb{Z})$. Using the normalized derivation
\[
D=\frac{1}{2\pi i}\frac{d}{dz},
\]
we write $\psi' := D\psi$ in this and the subsequent subsections (thus $d\psi/dz = 2\pi i\,\psi'$).

For $m\ge 2$, the Eisenstein series of weight $2m$ is
\[
E_{2m}(z) = 1 - \frac{4m}{B_{2m}}\sum_{n\ge 1}\sigma_{2m-1}(n)q^{n},
\qquad q=e^{2\pi i z},
\]
where $B_{2m}$ is the Bernoulli number and $\sigma_r(n)=\sum_{d\mid n}d^{r}$.
For $m=1$ the above series is no longer absolutely convergent; the holomorphic function
defined by the Fourier expansion
\[
E_2(z)=1-24\sum_{n\ge 1}\sigma_1(n) q^n
\]
is the (holomorphic) Eisenstein series of weight $2$. Unlike $E_{2m}$ for $m\ge 2$,
the function $E_2$ is not a modular form. Instead it satisfies the \emph{quasi-modular}
transformation law
\[
E_2(\gamma z)=(cz+d)^2E_2(z)+\frac{12}{2\pi i}c(cz+d),
\qquad
\gamma=\begin{pmatrix}a&b\\ c&d\end{pmatrix}\in\SL_2(\mathbb{Z}).
\]
More generally, a holomorphic function $f:\Hh\to\mathbb{C}$ is called a
\emph{quasi-modular form} of weight $k$ and depth $\le p$ (for $\SL_2(\mathbb{Z})$)
if for every $\gamma=\begin{pmatrix}a&b\\ c&d\end{pmatrix}\in\SL_2(\mathbb{Z})$ one has
\[
f(\gamma z)=(cz+d)^k\sum_{r=0}^p f_r(z)\Bigl(\frac{c}{cz+d}\Bigr)^r
\]
for some holomorphic functions $f_r$ on $\Hh$. The graded ring of quasi-modular forms
for $\SL_2(\mathbb{Z})$ is generated by $E_2,E_4,E_6$; see \cite{L7}.

The Ramanujan discriminant modular form of weight $12$ is
\[
\Delta(z) = q\prod_{n\ge 1}(1-q^{n})^{24},\qquad
\Delta = \frac{E_4^{3}-E_6^{2}}{1728}.
\]
From the transformation law
\[
\Delta(\gamma z) = (cz+d)^{12}\Delta(z),
\]
one computes
\[
\frac{\Delta'}{\Delta}(\gamma z) = (cz+d)^{2}\frac{\Delta'}{\Delta}(z) + \frac{12}{2\pi i}c(cz+d),
\]
where here $'$ denotes $D$. Hence the function
\[
a(z) = \frac{1}{6}\frac{\Delta'(z)}{\Delta(z)}I
\]
satisfies
\[
a(\gamma z) = (cz+d)^{2}a(z) + \frac{2}{2\pi i}c(cz+d)I,
\]
so $a$ is a classical modular connection of eccentricity $e=1$. By the modular covariant derivative definition, the associated covariant derivative on $M_k(\Gamma,V)$ is
\[
\nabla_a(\psi) = \psi' - \frac{k}{2}a\psi.
\]
Substituting $a=\frac{1}{6}\frac{\Delta'}{\Delta}=\frac{1}{6}E_2I$ yields
\[
\nabla_a(\psi) = \psi' - \frac{k}{12}E_2\psi,
\]
which is precisely the Serre derivative introduced in \cite{L6}.

\subsection{Ramanujan identities }

For $\Gamma=\SL_2(\mathbb{Z})$, the Eisenstein series $E_2,E_4,E_6$ satisfy the Ramanujan differential identities (see Zagier \cite{L7}):
\[
E'_2 = \frac{E_2^{2}-E_4}{12},\qquad
E'_4 = \frac{E_2E_4-E_6}{3},\qquad
E'_6 = \frac{E_2E_6-E_4^{2}}{2}.
\]

\begin{remark}
On $X=\Gamma\backslash \Hh$, let $A$ be a modular connection on $X$ with values in $\End(V)$ and eccentricity $e\neq 0$. Then the curvature
\[
F_A = A' - \frac{1}{2e}A^{2}
\]
defines a well-defined modular $2$-differential on $X$. Indeed, the anomaly term involves the Schwarzian $S(\lambda)$ of the transition function, and $S(\lambda)=0$ for M\"obius changes of coordinates. If $A$ is a classical modular connection with values in $\End(V)$, then $F_A\in M_4(\Gamma,\End(V))$.
\end{remark}

\subsection{Modular Curvature}

\begin{example}
Let $f:\Hh\to V$ be an element of $M_{2m}(\Gamma,V)$, where $V$ is an $n$-dimensional complex vector space. Define the Wronskian
\[
\Wr(f)(z) = f(z)\wedge f'(z)\wedge \cdots \wedge f^{(n-1)}(z)\in \bigwedge^{n}V,
\]
where derivatives are taken with respect to the normalized derivation $D$.
As in Section~2, $\Wr(f)$ transforms as a meromorphic modular form of weight
\[
N := 2mn + n(n-1),
\]
so $\Wr(f)\in M_N(\Gamma,\bigwedge^{n}V)$. Since $\bigwedge^{n}V$ is one-dimensional, we may regard $\Wr(f)$ as a scalar modular form up to a choice of basis. A change of basis multiplies $\Wr(f)$ by $\det(T)$, hence the logarithmic derivative $\Wr(f)'/\Wr(f)$ is independent of the choice of basis.

Define the scalar Maurer--Cartan connection
\[
A = \frac{2}{N}\frac{\Wr(f)'}{\Wr(f)}.
\]
Then $A$ is a classical modular connection of eccentricity $e=1$. Consequently its curvature
\[
F_A = A' - \frac{1}{2}A^{2}
\]
is a meromorphic modular form of weight $4$. Thus we obtain a natural nonlinear operator
\[
M_{2m}(\Gamma,V)\longrightarrow M_4(\Gamma,\mathbb{C}),\qquad f\longmapsto F_A.
\]
\end{example}

\subsection{The Chazy equation}

In the classical Cartan theory of Lie-algebra-valued connections, the curvature
$F=dA + \frac{1}{2}[A,A]$ satisfies the Bianchi identity $d_A F=0$. In our one-dimensional framework, the curvature of a connection is the quadratic expression
\[
F_A = A' - \frac{1}{2e}A^{2},
\]
which should be viewed as a curvature-type $2$-differential. However, as we saw in Section~3, $F_A$ is not globally tensorial on an arbitrary Riemann surface: under changes of coordinate it acquires an anomaly term proportional to the Schwarzian derivative of the transition map. Thus one should not expect a literal analogue of the classical Bianchi identity in general.
Chazy introduced in \cite{L8} a class of nonlinear third-order differential equations whose general solutions have fixed critical points. A distinguished example is the Chazy equation
\begin{equation}\label{eq:4.2}
2u''' - 2uu'' + 3(u')^{2} = 0,
\end{equation}
where, in this subsection, primes denote the normalized derivation $D$.
It is a classical fact (see Zagier \cite{L7}) that the Eisenstein series $E_2$ satisfies \eqref{eq:4.2}.
We now derive this from the modular connection $A=\frac{1}{6}\frac{\Delta'}{\Delta}$.

\begin{lemma}[ Chazy equation]
Let $\Gamma=\SL_2(\mathbb{Z})$ and write $f':=Df$ for $D=\frac{1}{2\pi i}\frac{d}{dz}$. Let
\[
A=\frac{1}{6}\frac{\Delta'}{\Delta}=\frac{1}{6}E_2,
\]
so that $A$ is a scalar classical modular connection of eccentricity $e=1$.
Let $F_A = A' - \frac{1}{2}A^{2}$ be the curvature. Then $A$ satisfies the Chazy equation
\[
\nabla_A^{2}F_A + 12F_A^{2} = 0,
\]
\end{lemma}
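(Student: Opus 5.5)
The plan is to reduce the stated identity to an explicit polynomial identity in $E_2,E_4,E_6$ and check it using the Ramanujan identities. Along the way I will show that, after substituting $A=u/6$, it is a constant multiple of the classical Chazy equation \eqref{eq:4.2}.

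First, fix the covariant derivatives. By the Ramanujan identities and the remark on modular curvature, $F_A$ lies in $M_4(\Gamma,\mathbb{C})$. With $e=1$, the modular covariant derivative on weight $k$ is $\nabla_A\psi=\psi'-\tfrac{k}{2}A\psi$. So $\nabla_A F_A=F_A'-2AF_A$, which has weight $6$, and applying $\nabla_A$ once more at weight $6$ gives
\[
\nabla_A^2F_A=F_A''-5AF_A'-2A'F_A+6A^2F_A .
\]

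Second, evaluate each piece with $A=\tfrac16E_2$:
\begin{itemize}
\item By $E_2'=(E_2^2-E_4)/12$, we get $A'=(E_2^2-E_4)/72$. Hence $F_A=A'-\tfrac12A^2=-E_4/72$, and the $E_2^2$ terms cancel.
\item Then $\nabla_AF_A=-\tfrac1{72}\bigl(E_4'-\tfrac13E_2E_4\bigr)=E_6/216$, using $E_4'=(E_2E_4-E_6)/3$.
\item Then $\nabla_A^2F_A=\tfrac1{216}\bigl(E_6'-\tfrac12E_2E_6\bigr)=-E_4^2/432$, using $E_6'=(E_2E_6-E_4^2)/2$.
\end{itemize}
Since $12F_A^2=12E_4^2/5184=E_4^2/432$, the two terms cancel and $\nabla_A^2F_A+12F_A^2=0$.

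Third, to justify the name Chazy equation, expand $\nabla_A^2F_A+12F_A^2$ directly in $A$ and its derivatives without using Ramanujan. Use $F_A=A'-\tfrac12A^2$, $F_A'=A''-AA'$ and $F_A''=A'''-AA''-(A')^2$. Collecting terms, the $A^2A'$ and $A^4$ coefficients vanish, leaving
\[
\nabla_A^2F_A+12F_A^2=A'''-6AA''+9(A')^2 .
\]
Setting $u=6A=E_2$, this equals $\tfrac1{12}\bigl(2u'''-2uu''+3(u')^2\bigr)$. So the lemma is equivalent to \eqref{eq:4.2} for $E_2$, which gives an independent proof via the classical fact cited from Zagier.

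The only real obstacle is bookkeeping. The main risk is using the wrong weight in each application of $\nabla_A$: it must be $4$ the first time and $6$ the second. The other risk is in collecting the $A^2A'$ and $A^4$ coefficients, which is where the choice of the constant $12$ is forced. I would double-check these by the $E_2,E_4,E_6$ computation above, which serves as a consistency test.
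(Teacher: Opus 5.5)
Your proof is correct. The weights $4$ and then $6$ in the two applications of $\nabla_A$ are the right ones, and the values $F_A=-E_4/72$, $\nabla_AF_A=E_6/216$ and $\nabla_A^2F_A=-E_4^2/432=-12F_A^2$ all check out. The purely formal expansion $\nabla_A^2F_A+12F_A^2=A'''-6AA''+9(A')^2=\tfrac1{12}\bigl(2u'''-2uu''+3(u')^2\bigr)$ with $u=6A$ is also correct.

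The paper states this lemma with no proof at all. It only announces that the Chazy equation for $E_2$ will be ``derived from the modular connection.'' Your third step is exactly the link the paper leaves implicit: it justifies calling $\nabla_A^2F_A+12F_A^2=0$ the Chazy equation \eqref{eq:4.2}.

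Your second step leans on the Ramanujan identities, which are classically proved by the same weight counting. A proof closer to the paper's stated intent avoids them:
\begin{itemize}
\item By the remark on modular curvature, $F_A$ is a weight-$4$ form. It is holomorphic on $\Hh$ and at the cusp, since $E_2$ is a $q$-series.
\item Since $\nabla_A$ maps weight $k$ to weight $k+2$, the expression $\nabla_A^2F_A+12F_A^2$ lies in $M_8(\SL_2(\mathbb{Z}))=\mathbb{C}E_4^2$.
\item It is therefore determined by its constant term. From $A=\tfrac16+O(q)$ and $A'=O(q)$ one gets $F_A=-\tfrac1{72}+O(q)$, $\nabla_AF_A=\tfrac1{216}+O(q)$ and $\nabla_A^2F_A=-\tfrac1{432}+O(q)$, while $12F_A^2=\tfrac1{432}+O(q)$.
\item Hence the sum is a weight-$8$ cusp form, so it vanishes.
\end{itemize}

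Your route gives explicit closed forms for every intermediate quantity. The dimension argument is independent of Ramanujan's identities, so combined with your third step it genuinely derives the Chazy equation for $E_2$ from the connection, as the paper intends.
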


\part{Periods}\label{sec:periods}

In this section we apply the ideas developed so far to study periods of holomorphic
$1$-forms on a one-parameter family of compact Riemann surfaces of fixed genus $g$,
generalizing the formula of Dedekind mentioned in the introduction. Differential equations satisfied by periods of holomorphic $1$-forms
often fall under the title of \emph{Picard--Fuchs equations}: these are
scalar linear ODEs of order $2g$ obtained from the Gauss--Manin system.
While extremely useful, such scalar equations are not well suited for the
Schwarzian point of view pursued in this paper. For $g>1$ we therefore
propose a different approach which produces a canonical second order ODE
with $g\times g$ matrix coefficients in $\End(E)$, and from there we obtain
new insight about periods using the matrix Schwarzian derivative.
Such a
one-parameter family, roughly speaking, either corresponds to a non-constant
holomorphic map from a Riemann surface $S$ to the moduli space of curves of genus
$g$, $\mathcal{M}_g$, or equivalently to a holomorphic fibration
$\pi\colon X\to S$ from a complex surface $X$ to $S$ whose fibers
$X_s=\pi^{-1}(s)$ are Riemann surfaces of genus $g$.

For a compact Riemann surface $C$ of genus $g$, its first cohomology group
$H^1(C,\mathbb{C})$ carries a natural Hodge decomposition
\[
H^1(C,\mathbb{C}) \;=\; H^{1,0}(C) \oplus H^{0,1}(C),
\]
where $H^{1,0}(C)\simeq H^0(C,K_C)$ is the space of holomorphic $1$-forms and
$H^{0,1}(C)\simeq \overline{H^{1,0}(C)}$ is the space of anti-holomorphic
$1$-forms.

Associated to a ``generic'' family $\pi\colon X\to S$ (to be made precise
later), or equivalently to the induced moduli map
\[
\phi\colon S\to \mathcal M_g,\qquad s\longmapsto [X_s],
\]
we will construct a classical connection
\[
A=A_\phi
\]
of eccentricity $e=\tfrac12$ with values in $\End(E)$, where
\[
E:=\pi_*\Omega^1_{X/S}
\]
is the Hodge bundle whose fiber at $s\in S$ is $H^{1,0}(X_s)$.
We will also construct a classical $2$-differential
\[
q=q_\phi
\]
with values in $\End(E)$.
The associated second order ODE with matrix coefficients
\[
\Psi'' = 2A\Psi' + q\Psi
\]
will then encode the variation of periods: for a locally constant homology
class $\gamma\in H_1(X_s,\mathbb{Z})$ and a holomorphic $1$-form
$\omega_s\in H^{1,0}(X_s)$ varying holomorphically with $s$, the period function
\[
s\longmapsto \int_{\gamma_s}\omega_s
\]
arises as a component of a solution $\Psi$ of the above equation.

We briefly recall that the traditional Picard--Fuchs approach starts from the
Gauss--Manin system (a first order linear system of rank $2g$) and produces a
scalar linear ODE of order $2g$ by choosing a cyclic vector in the sense of Katz.
However, for $g>1$ there is in general no canonical choice of such a cyclic vector
(or of a distinguished scalar period), so the resulting scalar equation depends on
auxiliary choices. By contrast, the second order ODE with coefficients in $\End(E)$
that we construct below is canonical: it is determined intrinsically by the family
(under an explicit genericity hypothesis) and is compatible with the gauge--theoretic
formalism developed in the previous sections.

\subsection{Dedekind's formula}\label{subsec:dedekind}

We illustrate the preceding discussion in the case of elliptic curves ($g=1$),
recovering Dedekind's Schwarzian differential equation from a Picard--Fuchs
computation. Throughout this subsection, $j$ denotes the normalized Hauptmodul
on $X(1)\simeq \Pp^1$ used in the introduction, so that the three special points
correspond to $j=0,1,\infty$.

\medskip
\noindent\textbf{The universal family.}
Consider the one-parameter family of smooth plane cubic curves
\begin{equation}\label{eq:dedekind-family}
E_j:\qquad y^2 = 4x^3 - g(j)\,x - g(j),
\qquad g(j)=\frac{27j}{j-1},
\end{equation}
over $U:=\Pp^1\setminus\{0,1,\infty\}$.
For the Weierstrass equation $y^2=4x^3-g_2x-g_3$, one has discriminant
$\Delta=g_2^3-27g_3^2$ and normalized modular invariant
\[
j=\frac{g_2^3}{\Delta}.
\]
In our family $g_2=g_3=g(j)$, hence
\[
\Delta = g(j)^2\bigl(g(j)-27\bigr),\qquad
\frac{g(j)^3}{\Delta}=\frac{g(j)}{g(j)-27}=j,
\]
so~\eqref{eq:dedekind-family} indeed gives the universal elliptic curve over the
$j$-line in this normalization (cf.~\cite{L4}).

\medskip
\noindent\textbf{Gauss--Manin system.}
Let
\[
\omega=\frac{dx}{y},\qquad \eta=\frac{x\,dx}{y}
\]
be the standard holomorphic differential and a differential of the second kind on
$E_j$. Then $\{\omega,\eta\}$ is a basis of $H^1_{\mathrm{dR}}(E_j)$.
Differentiating in the parameter $g$ (and using $y^2=4x^3-gx-g$) gives
\[
\partial_g\omega = \frac{x+1}{2}\frac{dx}{y^3},\qquad
\partial_g\eta = \frac{x(x+1)}{2}\frac{dx}{y^3}.
\]
By a standard reduction step (subtracting exact differentials of the form
$d(H/y)$; see \cite{L9}), one can express these derivatives in the basis
$\{\omega,\eta\}$.

\begin{lemma}\label{lem:dedekind-gm}
In $H^1_{\mathrm{dR}}(E_j)$ one has
\begin{equation}\label{eq:dedekind-gm}
\frac{d}{dg}
\binom{\omega}{\eta}
=
\begin{pmatrix}
\displaystyle \frac{18-g}{4g(g-27)} & \displaystyle -\frac{3}{2g(g-27)}\\[8pt]
\displaystyle \frac{1}{8(g-27)} & \displaystyle \frac{g-18}{4g(g-27)}
\end{pmatrix}
\binom{\omega}{\eta},
\end{equation}
where $g=g(j)$ as in~\eqref{eq:dedekind-family}.
\end{lemma}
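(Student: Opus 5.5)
We will compute directly in $H^1_{\mathrm{dR}}(E_j)$, using the standard Griffiths--Dwork style reduction of differentials with poles of order three along $y=0$. Write $f(x)=4x^3-gx-g$, so $y^2=f$ and $f'(x)=12x^2-g$. Differentiating $y^2=f$ in $g$ at fixed $x$ gives $\partial_g y=-(x+1)/(2y)$. Hence the formulas quoted before the lemma hold:
\[
\partial_g\omega=\frac{(x+1)\,dx}{2y^3},\qquad
\partial_g\eta=\frac{x(x+1)\,dx}{2y^3}.
\]
The task is to rewrite $R(x)\,dx/y^3$, for $R=\tfrac12(x+1)$ and $R=\tfrac12x(x+1)$, as a combination of $dx/y$ and $x\,dx/y$ modulo exact forms.

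\textbf{Step 1 (pole reduction).} For a polynomial $h$ one has
\[
d\!\left(\frac{h}{y}\right)=\frac{h'\,dx}{y}-\frac{h f'\,dx}{2y^3},
\]
so $h f'\,dx/y^3\equiv 2h'\,dx/y$ in cohomology. Trivially, $h f\,dx/y^3=h\,dx/y$. Since the discriminant $g^2(g-27)$ (up to a constant) is nonzero on $U$, $f$ and $f'$ are coprime. There are therefore polynomials $a,b$ with $\deg a\le 1$, $\deg b\le 2$ and $af+bf'=1$. I will compute these explicitly via the resultant; their coefficients will have denominator proportional to $g(g-27)$, which is the source of the denominators in \eqref{eq:dedekind-gm}. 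Writing $R=aRf+bRf'$ then gives
\[
\frac{R\,dx}{y^3}\equiv \bigl(aR+2(bR)'\bigr)\frac{dx}{y}.
\]

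\textbf{Step 2 (degree reduction).} The resulting numerator has degree at most $3$. I will lower it using the exact forms $d(x^k y)$. From $d(y)=f'\,dx/(2y)$ we get $x^2\,dx/y\equiv (g/12)\,dx/y$. Similarly, $d(xy)=(y^2+xf'/2)\,dx/y=(f+xf'/2)\,dx/y$ handles $x^3$, giving $10x^3\,dx/y\equiv(\tfrac32 gx+g)\,dx/y$. This leaves combinations of $\omega$ and $\eta$. Reading off the coefficients should produce the four entries. As a consistency check, the trace must be zero: the Wronskian pairing $\omega\cdot\eta$ is constant, since the cup product on $H^1$ is flat. The stated matrix has $\frac{18-g}{4g(g-27)}+\frac{g-18}{4g(g-27)}=0$, which matches.

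\textbf{Main obstacle.} The only real work is the explicit Bezout identity $af+bf'=1$ for $f=4x^3-gx-g$, together with keeping the rational functions of $g$ straight. To avoid solving a full linear system, I would first try an ansatz. Take $b=\beta_2x^2+\beta_1x+\beta_0$ and $a=\alpha_1x+\alpha_0$, match the coefficients of $x^4,\dots,x^0$ (five equations in five unknowns), and solve by back-substitution starting from the top degree. Errors are easily detected: the trace-zero check and the known Picard--Fuchs equation of the introduction (obtained after the change of variable $g=27j/(j-1)$ in the later derivation) serve as independent verification.
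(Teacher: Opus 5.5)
Your plan is correct and is essentially the paper's argument. The paper simply exhibits the single exact correction $d(H/y)$, while your Bezout step (explicitly $g(27-g)=(18x-27)f+(g+9x-6x^2)f'$) followed by the $d(y)$, $d(xy)$ reductions assembles into exactly the same $H$, as it must since $d(H/y)=0$ forces $H=0$; carrying the arithmetic through reproduces all four entries. One small correction to your sanity check: flatness of the cup product only gives $\frac{d}{dg}\langle\omega,\eta\rangle=\operatorname{tr}(M)\,\langle\omega,\eta\rangle$ for the connection matrix $M$, so the constancy of $\langle\omega,\eta\rangle$ has to come separately from the Legendre relation (a residue at $\infty$ that sees only the leading term $y^2\sim 4x^3$).
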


\begin{proof}
We sketch the reduction for $\partial_g\omega$; the computation for
$\partial_g\eta$ is similar.
Using $d(H/y)=H'\,dx/y - \frac12H\,(12x^2-g)\,dx/y^3$ and the identity
$dx/y = (4x^3-gx-g)\,dx/y^3$, a direct calculation shows that
\[
\partial_g\omega
=
\frac{18-g}{4g(g-27)}\,\omega
-\frac{3}{2g(g-27)}\,\eta
+
d\!\left(\frac{H}{y}\right),
\]
where one may take
\[
H(x)=\frac{1}{g-27}\Bigl(\frac{3}{g}x^2+\frac{18-g}{2g}x-\frac12\Bigr).
\]

Similarly, one checks that
\[
\partial_g\eta
=
\frac{1}{8(g-27)}\,\omega
+\frac{g-18}{4g(g-27)}\,\eta
+
d\!\left(\frac{H_2}{y}\right),
\]
where
\[
H_2(x)=\frac{1}{g-27}\Bigl(\frac{18-g}{2g}x^2+\frac14x+\frac34\Bigr).
\]
which yields the second row of~\eqref{eq:dedekind-gm}.

Since exact forms integrate to zero over closed cycles, this yields the first row
of~\eqref{eq:dedekind-gm}. The second row follows from an analogous reduction of
$\partial_g\eta$.
\end{proof}

Let $\gamma$ be a locally constant cycle in $H_1(E_j,\mathbb{Z})$.
Integrating~\eqref{eq:dedekind-gm} over $\gamma$ yields a first order linear system
for the period vector
\[
\Pi_\gamma(g)=\binom{\int_\gamma\omega}{\int_\gamma\eta}.
\]
Eliminating the second component gives a scalar second-order equation for
$y(g):=\int_\gamma\omega$.

\begin{proposition}[Picard--Fuchs equation in the $j$--coordinate]\label{prop:dedekind-pf}
The periods $y(j)=\int_\gamma\omega$ satisfy the Picard--Fuchs equation
\begin{equation}\label{eq:dedekind-pf}
\frac{d^2y}{dj^2} + \frac{1}{j}\frac{dy}{dj}
+ \frac{31j-4}{144j^2(1-j)^2}\,y = 0.
\end{equation}
\end{proposition}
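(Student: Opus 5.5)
We plan to reduce the first-order Gauss--Manin system of Lemma~\ref{lem:dedekind-gm} to a scalar equation in the variable $g$. We then transport that equation to the $j$-coordinate using the coordinate-change rule (ii) of Section~\ref{sec:classical-ode-schwarzian}.

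\textbf{Step 1: elimination in the $g$-coordinate.} Write the matrix in \eqref{eq:dedekind-gm} as $\begin{pmatrix} a & b\\ c & d\end{pmatrix}$, and put $y=\int_\gamma\omega$, $z=\int_\gamma\eta$. Since $\gamma$ is locally constant, integration commutes with $d/dg$, so
\[
y_g=ay+bz,\qquad z_g=cy+dz .
\]
Because $b\not\equiv 0$, we can solve $z=(y_g-ay)/b$, differentiate once more and substitute for $z_g$. This gives
\[
y_{gg}=\Bigl(a+d+\frac{b_g}{b}\Bigr)y_g+\Bigl(a_g-a\frac{b_g}{b}+bc-ad\Bigr)y .
\]
Here $a+d=0$ and $b_g/b=-\frac{2g-27}{g(g-27)}$. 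The $y$-coefficient is computed from
\[
bc-ad=\det\begin{pmatrix} a & b\\ c & d\end{pmatrix}\cdot(-1)
\]
together with the explicit entries; it should come out as $-\frac{c_0}{g(g-27)^2}$ for an explicit constant $c_0$, after simplifying $a_g$. This produces an equation $y_{gg}=2p_g\,y_g+q_g\,y$ with explicit rational coefficients.

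\textbf{Step 2: change of coordinate.} We set $g=\lambda(j)=27j/(j-1)$, so that $\lambda'=-27/(j-1)^2$ and $\lambda''/\lambda'=-2/(j-1)$. By rule (ii),
\[
\widetilde p=(p_g\circ\lambda)\lambda'+\tfrac12\frac{\lambda''}{\lambda'},\qquad \widetilde q=(q_g\circ\lambda)(\lambda')^2 .
\]
Substituting, and using
\[
g-27=\frac{27}{j-1},\qquad 2g-27=\frac{27(j+1)}{j-1},
\]
the first-order coefficient $2\widetilde p$ should collapse to $-1/j$. Likewise, $\widetilde q$ should become $-\frac{31j-4}{144j^2(1-j)^2}$, which is exactly \eqref{eq:dedekind-pf}.

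\textbf{Main obstacle and checks.} The main obstacle is purely the rational-function algebra in the $q$-term, which is error-prone. I would guard against mistakes with two independent checks.
\begin{itemize}
\item First, compare local exponents. From \eqref{eq:dedekind-pf}, the indicial equations at $j=0,1,\infty$ give exponent differences $1/3$, $1/2$ and $0$. The $\infty$ difference is $0$ because the $1/j$ term and the leading behaviour of $31j/(144j^2)$ give a repeated exponent $1/12$. These match the orders of the elliptic points and the cusp of $\SL_2(\mathbb Z)$, and they are forced by the Gauss--Manin monodromy.
\item Second, apply Proposition~\ref{prop:classical-schwarzian-ode} to \eqref{eq:dedekind-pf} with $p=-1/(2j)$ and $q=-\frac{31j-4}{144j^2(1-j)^2}$. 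This should reproduce Dedekind's rational function for $S(\tau)$ quoted in the introduction. Agreement there, together with the exponents, pins down all coefficients of a Fuchsian equation with three singular points.
\end{itemize}
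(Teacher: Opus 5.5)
Your route is the paper's: eliminate $z=(y_g-ay)/b$ to get a scalar equation in $g$, then substitute $g=27j/(j-1)$ via the chain rule (rule (ii)). Your values of $b_g/b$, $\lambda'$ and $\lambda''/\lambda'$ are correct, and so are your final $2\widetilde p=-1/j$ and $\widetilde q=-\frac{31j-4}{144j^2(1-j)^2}$.

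Two of your intermediate predictions are wrong, though, so the computation has to be carried out rather than anticipated.

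\textbf{The $y$-coefficient in the $g$-equation.} Since $g^2-39g+324=(g-12)(g-27)$, one gets $bc-ad=\frac{g-12}{16g^2(g-27)}$ and $a_g-a\,b_g/b=-\frac{1}{4g(g-27)}$. So the $y$-coefficient is $-\frac{3(g+4)}{16g^2(g-27)}$, exactly as in the paper. It is not of the form $-c_0/(g(g-27)^2)$. A coefficient of that shape would transform to $-c_0/(27j(j-1))$ and could never produce the numerator $31j-4$. That numerator comes precisely from $g+4=\frac{31j-4}{j-1}$.

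\textbf{The exponent check at $\infty$.} The coefficient of $y$ behaves like $\frac{31}{144j^3}$ as $j\to\infty$; you dropped the factor $(1-j)^2$. Hence only the $1/j$ term enters the indicial equation there, which reads $\rho^2=0$. The repeated exponent at $\infty$ is therefore $0$, not $\frac1{12}$. Fuchs' relation, using the exponents $\pm\frac16$ at $0$ and $\frac14,\frac34$ at $1$, also forces this. The value $\frac1{12}$ is instead the exponent at $\infty$ of $F$ in $y=j^{-1/6}(1-j)^{1/4}F$.

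Your exponent differences ($\frac13,\frac12,0$) and your Dedekind-formula check are correct.
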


\begin{proof}
Let $\gamma$ be a locally constant cycle and set
\[
y(g):=\int_{\gamma}\omega,\qquad z(g):=\int_{\gamma}\eta.
\]
Integrating~\eqref{eq:dedekind-gm} gives a first-order system
\[
\frac{d}{dg}\binom{y}{z}
=
\begin{pmatrix}
a & b\\
c & -a
\end{pmatrix}
\binom{y}{z},
\qquad
\begin{aligned}
a&=\frac{18-g}{4g(g-27)},\\
b&=-\frac{3}{2g(g-27)},\\
c&=\frac{1}{8(g-27)}.
\end{aligned}
\]
From the first equation we have $z=(y'-ay)/b$. Differentiating this identity and
using the second equation to eliminate $z'$ yields the scalar equation
\[
\frac{d^2y}{dg^2}+\frac{2g-27}{g(g-27)}\frac{dy}{dg}
+\frac{3(g+4)}{16g^2(g-27)}\,y=0.
\]
Finally, using the change of variable $g=\frac{27j}{j-1}$ and the chain rule converts
this into~\eqref{eq:dedekind-pf}. This is the equation stated in the introduction.
\end{proof}

\medskip
\noindent\textbf{Dedekind's Schwarzian equation.}
Let $\omega_1(j),\omega_2(j)$ be two linearly independent solutions of
\eqref{eq:dedekind-pf} and set $\tau(j)=\omega_2(j)/\omega_1(j)$.
Changing the ordered basis $(\omega_1,\omega_2)$ acts on $\tau$ by a M\"obius
transformation, hence $S(\tau)$ is well defined. For a choice of period basis
the ambiguity is the natural action of $\SL_2(\mathbb{Z})$ on $\tau$.
In particular, $\tau(j)$ depends only on the period lattice and gives a
uniformizing parameter on $\Hh$.
Putting~\eqref{eq:dedekind-pf} into normal form by removing the first derivative term,
one finds that $\tau$ satisfies a Schwarzian differential equation.
More precisely, from \eqref{eq:dedekind-pf} one computes
\[
S(\tau)(j)=
2\left(b-\frac12a'-\frac14a^2\right),
\qquad
a=\frac{1}{j},\quad b=\frac{31j-4}{144j^2(1-j)^2},
\]
and hence
\begin{equation}\label{eq:dedekind-schwarzian}
S(\tau)(j) = \frac{3}{8(1-j)^2} + \frac{4}{9j^2} + \frac{23}{72j(1-j)}.
\end{equation}
This is precisely the formula found by Dedekind \cite{L4}; compare the discussion in
the introduction. In the language of Section~4, rewriting~\eqref{eq:dedekind-pf} in the form
$y''=2Ay'+qy$ with
\[
A=-\frac{1}{2j},\qquad q=-\frac{31j-4}{144j^2(1-j)^2},
\]
one obtains $2(F_A-q)=S(\tau)(j)$.
We now proceed to extend this construction to higher genus $g>1$, where the ODE remains of second order but the coefficients become $g\times g$ matrices.

\section{Higher genus periods}\label{sec:higher-genus-periods}

On the moduli space $\mathcal{M}_g$ there is a canonical holomorphic vector
bundle $\mathbb{E}$ of rank $g$ (the \emph{Hodge bundle}) whose fiber at a point
$[C]\in\mathcal{M}_g$ is
\[
\mathbb{E}_{[C]}\;\cong\;H^{1,0}(C)\;\cong\;H^0(C,K_C).
\]
In algebro--geometric terms, if $\pi\colon \mathcal{C}_g\to\mathcal{M}_g$ denotes
the universal curve (viewed as a stack), then
\[
\mathbb{E}=\pi_*\omega_{\pi},
\]
where $\omega_{\pi}$ is the relative dualizing sheaf \cite{L11}.
For a family $\pi\colon X\to S$ with induced moduli map $\phi\colon S\to\mathcal{M}_g$,
the bundle $E=\pi_*\Omega^1_{X/S}$ introduced above is naturally identified with the
pullback $\phi^*\mathbb{E}$.

Alongside $\mathbb{E}$ one has the rank $2g$ local system
\[
\mathbb{H}:=R^1\pi_*\mathbb{C}
\]
on $S$, whose fiber at $s\in S$ is $H^1(X_s,\mathbb{C})$.
The associated holomorphic vector bundle
\[
\mathcal{H}:=\mathbb{H}\otimes_{\mathbb{C}}\mathcal{O}_S\cong H^1_{\mathrm{dR}}(X/S)
\]
admits the \emph{Gauss--Manin connection}
\[
\nabla^{\mathrm{GM}}\colon \mathcal{H}\longrightarrow \mathcal{H}\otimes\Omega^1_S,
\qquad (\nabla^{\mathrm{GM}})^2=0,
\]
which is a canonical integrable (flat) connection encoding the parallel
transport of cohomology classes in families \cite{L9}.
The Hodge decomposition of each fiber $H^1(X_s,\mathbb{C})\cong H^{1,0}(X_s)\oplus H^{0,1}(X_s)$
is not preserved by $\nabla^{\mathrm{GM}}$; instead one has Griffiths transversality
\(
\nabla^{\mathrm{GM}}(E)\subset (\mathcal{H}/E)\otimes\Omega^1_S
\), which reflects the variation of complex structure.

Fix a simply connected open set $U\subset S$ with coordinate $t$.
Choosing a flat basis of cycles $\gamma_1,\dots,\gamma_{2g}$ for the local system
$R_1\pi_*\mathbb{Z}$ and a holomorphic frame $\omega_1,\dots,\omega_g$ of $E|_U$,
the period matrix
\[
\Pi(t):=\bigl(\,\int_{\gamma_j(t)}\omega_i(t)\,\bigr)_{1\le i\le g,\,1\le j\le 2g}
\]
satisfies a first order linear differential system (the \emph{Gauss--Manin system})
obtained by expressing $\nabla^{\mathrm{GM}}\omega_i$ in the chosen basis.
Equivalently, the vector of all period functions is a horizontal section of
$(\mathcal{H},\nabla^{\mathrm{GM}})$.

In the one-parameter case one may also extract from the rank $2g$ Gauss--Manin system
a scalar linear ordinary differential equation of order $2g$ (a Picard--Fuchs equation)
annihilating a given period function.
Katz gives an algebraic construction of the Gauss--Manin connection and a method to pass
from the first-order system to such a scalar equation by choosing a cyclic vector
\cite{L10,L12}. This reduction is highly useful, but it is not canonical in general:
the resulting scalar equation depends on the choice of cyclic vector (equivalently, on the
elimination procedure).
In this section we instead emphasize the canonical second order ODE with coefficients in
$\End(E)$ attached to a generic family.

\begin{definition}\label{def:generic-family}
Let $\pi\colon X\to S$ be a holomorphic family of compact Riemann surfaces of
genus $g\ge 1$, and let $E=\pi_*\Omega^1_{X/S}$ be the Hodge bundle on $S$.
On a simply connected open set $U\subset S$ with local coordinate $s$, let
\[
\theta_{\partial_s}\colon E|_U \longrightarrow (R^1\pi_*\mathcal{O}_X)|_U
\]
denote the Higgs field (equivalently, the $(0,1)$-part of the Gauss--Manin
connection along $\partial_s$).
We say that the family $\pi$ is \emph{generic on $U$} if $\theta_{\partial_s}$
is an isomorphism on a dense open subset of $U$ (equivalently, if
$\det(\theta_{\partial_s})$ is not identically zero on $U$).
We say that $\pi$ is \emph{generic} if it is generic on every sufficiently
small coordinate chart $U\subset S$.
\end{definition}

\begin{remark}\label{rem:generic-family-equivalences}
Since $\operatorname{rank}(E)=\operatorname{rank}(R^1\pi_*\mathcal{O}_X)=g$, the
condition in Definition~\ref{def:generic-family} is equivalent to requiring
that for a generic point $s\in U$ one can choose a holomorphic frame
$\omega_1,\dots,\omega_g$ of $E|_U$ such that the classes of
$\partial_s\omega_1,\dots,\partial_s\omega_g$ span $H^{0,1}(X_s)$.
Equivalently, for a generic $s\in U$ the induced bilinear form
\[
(\omega,\eta)\longmapsto \langle \kappa_s(\partial_s),\,\omega\eta\rangle
\qquad \bigl(\omega,\eta\in H^{1,0}(X_s)\bigr)
\]
is nondegenerate, where $\kappa_s(\partial_s)\in H^1(X_s,T_{X_s})$ is the
Kodaira--Spencer class.
\end{remark}

\begin{definition}\label{def:period-Aq}
Let $\pi\colon X\to S$ be a generic family of compact Riemann surfaces of genus $g$
over a Riemann surface $S$, and let
\[
E:=\pi_*\Omega^1_{X/S}
\]
be the Hodge bundle on $S$. Let $\mathcal{H}=H^1_{\mathrm{dR}}(X/S)$ denote the
relative de Rham bundle equipped with the Gauss--Manin connection
$\nabla^{\mathrm{GM}}$.
Fix a simply connected coordinate chart $U\subset S$ with local coordinate $s$
on which the genericity condition holds.
Choosing a flat trivialization of $(\mathcal{H},\nabla^{\mathrm{GM}})$ over $U$,
we identify covariant differentiation $\nabla^{\mathrm{GM}}_{\partial_s}$ with
ordinary differentiation and write primes for $d/ds$.

Assume that $E|_{U}$ admits a holomorphic frame $\omega_1,\dots,\omega_g$ such
that the $2g$ sections
\[
\omega_1,\dots,\omega_g,\ \omega_1':=\partial_s\omega_1,\dots,\partial_s\omega_g=:\omega_g'
\]
form a frame of $\mathcal{H}|_{U}$ (equivalently, the matrix of the Higgs field
$\theta_{\partial_s}$ in the frame $\omega_1,\dots,\omega_g$ is pointwise
invertible on a dense open subset of $U$).
Let
\[
\omega:=\begin{pmatrix}\omega_1\\ \vdots\\ \omega_g\end{pmatrix}.
\]
Since $(\omega,\omega')$ is a frame of $\mathcal{H}|_{U}$, there exist unique
meromorphic matrix-valued functions
\[
A_U,\ q_U\in \M(U,\End(\mathbb{C}^g))
\]
such that
\[
\omega'' \;=\; 2A_U\,\omega' \;+\; q_U\,\omega.
\]
We call $A_U$ and $q_U$ the \emph{local coefficient matrices} attached to the
family $\pi$ (or equivalently to the moduli map $\phi$) on $U$.
\end{definition}

\begin{theorem}\label{thm:period-Aq-transformation}
In the setting of Definition~\ref{def:period-Aq}, the collections
$\{A_U\}$ and $\{q_U\}$ glue to define a classical connection $A=A_\phi$ on $E$
of eccentricity $e=1/2$ and a classical $2$-differential $q=q_\phi$ with
values in $\End(E)$.

More precisely, let $(U_i,s_i,\omega^{(i)})$ and $(U_j,s_j,\omega^{(j)})$ be two
choices as in Definition~\ref{def:period-Aq}, and write
$\lambda_{ij}=s_i\circ s_j^{-1}$ for the change of coordinate on $U_i\cap U_j$.
Assume that the corresponding frames are related by a constant matrix
$g_{ij}\in \GL_g(\mathbb{C})$,
\[
\omega^{(j)} = g_{ji}\,(\omega^{(i)}\circ \lambda_{ij}),
\qquad g_{ji}=g_{ij}^{-1}.
\]
Then on $U_i\cap U_j$ one has
\[
A_j
=
g_{ji}\Bigl(\lambda'_{ij}(A_i\circ \lambda_{ij})+\frac12\frac{\lambda''_{ij}}{\lambda'_{ij}}I\Bigr)g_{ij},
\qquad
q_j
=
(\lambda'_{ij})^2\,g_{ji}\,(q_i\circ \lambda_{ij})\,g_{ij}.
\]
In particular, $A$ transforms as a (classical) connection of eccentricity
$e$ and $q$ transforms as an $\End(E)$-valued $2$-differential.
\end{theorem}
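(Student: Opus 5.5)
The plan is to compute directly. We transport the defining relation $\omega''=2A\omega'+q\omega$ from the chart $U_i$ to $U_j$, using the chain rule, and then read off the coefficients. This is the same computation as in Lemma~\ref{lem:quantum-second-order-coordinates}, combined with a constant gauge change of frame. Throughout we work in a flat trivialization of $(\mathcal H,\nabla^{\mathrm{GM}})$, so differentiation along $\partial_{s_j}$ is ordinary differentiation. Flat trivializations on the two charts differ by a constant matrix, which commutes with $d/ds$, so the primes are intrinsic on the overlap.

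Set $\lambda=\lambda_{ij}$ and $Y:=\omega^{(i)}$, viewed as a function of $s_i$. By hypothesis $\omega^{(j)}=g_{ji}(Y\circ\lambda)$ with $g_{ji}$ constant. Differentiating in $s_j$ gives
\[
(\omega^{(j)})'=\lambda' g_{ji}(Y'\circ\lambda),\qquad (\omega^{(j)})''=(\lambda')^2g_{ji}(Y''\circ\lambda)+\lambda''g_{ji}(Y'\circ\lambda).
\]
We substitute $Y''=2A_iY'+q_iY$, and then replace $Y'\circ\lambda$ by $(\lambda')^{-1}g_{ij}(\omega^{(j)})'$ and $Y\circ\lambda$ by $g_{ij}\omega^{(j)}$. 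This yields
\[
(\omega^{(j)})''=2\,g_{ji}\Bigl(\lambda'(A_i\circ\lambda)+\tfrac12\tfrac{\lambda''}{\lambda'}I\Bigr)g_{ij}\,(\omega^{(j)})'+(\lambda')^2g_{ji}(q_i\circ\lambda)g_{ij}\,\omega^{(j)}.
\]
Since $g_{ji}$ is constant, the term $(g_{ji})'g_{ij}$ in the general bundle transformation law vanishes. The displayed formula therefore has exactly the asserted shape.

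The key step is uniqueness. By the genericity hypothesis, $(\omega^{(j)},(\omega^{(j)})')$ is a frame of $\mathcal H|_{U_j}$ on a dense open subset. Hence the coefficients expressing $(\omega^{(j)})''$ in this frame are uniquely determined as meromorphic matrices. Comparing with the defining relation $(\omega^{(j)})''=2A_j(\omega^{(j)})'+q_j\omega^{(j)}$ gives the two stated formulas on a Zariski open subset of $U_i\cap U_j$. By the identity theorem they hold as meromorphic identities on the whole overlap. Note that for constant $g_{ij}$, the frame $(\omega^{(j)},(\omega^{(j)})')$ is the image of the frame on $U_i$ under a block matrix built from $g_{ji}$ and $\lambda'$. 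So if genericity holds on one chart it holds on the other, and the frames on both sides are legitimate.

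Finally we glue. The transition rules satisfy the cocycle conditions, since the $\lambda_{ij}$ and $g_{ij}$ do. Comparing with the transformation law for a bundle-valued quantum connection of eccentricity $e$ on $E$ (with $e=\tfrac12$, and using the coefficient version from the example on classical connections on $E\otimes K_X^{-e}$), and with the rule for $\End(E)$-valued $m$-differentials (with $m=2$), shows that $\{A_U\}$ defines a classical connection of eccentricity $\tfrac12$ and $\{q_U\}$ a classical $\End(E)$-valued $2$-differential.

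The main obstacle is not the algebra but the bookkeeping. One must confirm that the frames on different charts really can be related by constant $g_{ij}$, as the statement assumes. One must also confirm that the flat trivializations are compatible, so that $\omega''$ on $U_j$ is computed with the same Gauss--Manin derivative. If frames were related by a nonconstant holomorphic $g_{ji}$, extra terms $(g_{ji})'g_{ij}$ would appear in $A_j$ and additional $g''$-terms would appear in $q_j$, as in Theorem~\ref{lem:gauge-action-on-q}. The stated formulas rely precisely on constancy.
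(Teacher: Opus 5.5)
Your proposal is correct and is essentially the paper's proof: the paper also gets uniqueness of $A_j,q_j$ from genericity, applies the chain-rule computation of Lemma~\ref{lem:quantum-second-order-coordinates} to $\omega^{(i)}\circ\lambda_{ij}$, and then conjugates by the constant matrix $g_{ji}$, whereas you fold these two steps into a single computation. You also rightly treat constancy of $g_{ij}$ as a hypothesis rather than something the computation proves, and you correctly note that without it the extra terms of Theorem~\ref{lem:gauge-action-on-q} would appear and $q$ would no longer transform tensorially.
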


\begin{proof}
The key point is that the relative de~Rham bundle
\[
\mathcal H = H^1_{\mathrm{dR}}(X/S)
\]
carries the flat Gauss--Manin connection. Hence on each simply connected chart
$U_i$ we may choose a flat trivialization of $\mathcal H|_{U_i}$, and on an
overlap $U_i\cap U_j$ two such trivializations differ by a locally constant
matrix. After shrinking if necessary, we may therefore write the two Hodge
frames in the form
\[
\omega^{(j)} = g_{ji}\,(\omega^{(i)}\circ \lambda_{ij}),
\qquad g_{ji}=g_{ij}^{-1},
\]
with $g_{ij}\in \GL_g(\mathbb C)$ locally constant on $U_i\cap U_j$.
This justifies the constant transition matrix appearing in the statement.

We next verify that the second-order system is canonical. On a chart
$(U_i,s_i)$, genericity implies that
\[
\omega^{(i)}_1,\dots,\omega^{(i)}_g,\;
\partial_{s_i}\omega^{(i)}_1,\dots,\partial_{s_i}\omega^{(i)}_g
\]
form a frame of $\mathcal H|_{U_i}$ on a dense open subset. Therefore
$(\omega^{(i)},(\omega^{(i)})')$ is a basis there, and so the second derivative
$(\omega^{(i)})''$ can be written uniquely as
\[
(\omega^{(i)})'' = 2A_i\,(\omega^{(i)})' + q_i\,\omega^{(i)}.
\]
Thus the matrices $A_i$ and $q_i$ are uniquely determined by the family and the
chosen coordinate, so the resulting second-order system is canonical once the
Hodge frame is fixed.

Now fix a point of $U_i\cap U_j$ and write $\lambda=\lambda_{ij}=s_i\circ s_j^{-1}$.
Set $p_i:=2A_i$. Consider first the same vector-valued section written in the
$s_j$-coordinate but before changing the flat frame:
\[
\widetilde\omega:=\omega^{(i)}\circ\lambda.
\]
By Lemma~\ref{lem:quantum-second-order-coordinates}, applied componentwise to
the vector-valued function $\widetilde\omega$, the equation
\[
(\omega^{(i)})''=p_i\,(\omega^{(i)})'+q_i\,\omega^{(i)}
\]
transforms into
\[
\widetilde\omega''=\widetilde p_j\,\widetilde\omega'+\widetilde q_j\,\widetilde\omega
\]
with
\[
\widetilde p_j=\lambda'(p_i\circ\lambda)+\frac{\lambda''}{\lambda'}I,
\qquad
\widetilde q_j=(\lambda')^2(q_i\circ\lambda).
\]
Writing $\widetilde p_j=2\widetilde A_j$, we get
\[
\widetilde A_j=\lambda'(A_i\circ\lambda)+\frac12\frac{\lambda''}{\lambda'}I.
\]

Finally, passing from $\widetilde\omega$ to the chosen Hodge frame
\[
\omega^{(j)}=g_{ji}\,\widetilde\omega,
\]
with $g_{ji}$ locally constant, differentiation commutes with multiplication by
$g_{ji}$. Hence $\omega^{(j)}$ satisfies
\[
(\omega^{(j)})'' = 2A_j(\omega^{(j)})' + q_j\omega^{(j)}
\]
where
\[
A_j=g_{ji}\,\widetilde A_j\,g_{ij},
\qquad
q_j=g_{ji}\,\widetilde q_j\,g_{ij}.
\]
Substituting the formulas for $\widetilde A_j$ and $\widetilde q_j$ gives
\[
A_j
=
g_{ji}\Bigl(\lambda'(A_i\circ\lambda)+\frac12\frac{\lambda''}{\lambda'}I\Bigr)g_{ij},
\qquad
q_j
=
(\lambda')^2\,g_{ji}\,(q_i\circ\lambda)\,g_{ij},
\]
which is exactly the claimed transformation law. Therefore the local matrices
$\{A_U\}$ glue to a classical connection of eccentricity $e=\tfrac12$ on $E$,
and the local matrices $\{q_U\}$ glue to an $\End(E)$-valued classical
$2$-differential.
\end{proof}

\section{A genus two family }\label{sec:genus-two-family}

We conclude the section with a concrete genus two example.
Recall that every smooth curve of genus $2$ is hyperelliptic, so hyperelliptic
families provide a natural and sufficiently general testing ground for the
matrix Schwarzian formalism.

Let $U\subset\mathbb{C}$ be a simply connected open set avoiding the discriminant locus,
and consider the one--parameter family of genus $2$ curves
\begin{equation}\label{eq:genus2-family}
X_t:\qquad y^2=x^5+t\,x^2+1,\qquad t\in U.
\end{equation}
The fiber $X_t$ is smooth if and only if
\[
D(t):=\operatorname{Res}_x(x^5+t\,x^2+1,\ 5x^4+2tx)=108t^5+3125
\]
is nonzero.

On $X_t$ the holomorphic $1$-forms are spanned by
\[
\omega_1=\frac{dx}{y},\qquad \omega_2=\frac{x\,dx}{y}.
\]
Writing $\omega=(\omega_1,\omega_2)^t$, differentiation with respect to $t$
gives meromorphic differentials of the second kind
\[
\omega_1'=-\frac12\,\frac{x^2\,dx}{y^3},\qquad
\omega_2'=-\frac12\,\frac{x^3\,dx}{y^3}.
\]
A direct computation in $H^1_{\mathrm{dR}}(X_t)$ (for instance via Griffiths
reduction) shows that the $4$ classes
\[
\omega_1,\ \omega_2,\ \omega_1',\ \omega_2'
\]
form a basis for $H^1_{\mathrm{dR}}(X_t)$ for $D(t)\neq 0$, so the family is
generic in the sense of Definition~\ref{def:generic-family} on $U$.
Therefore, by Definition~\ref{def:period-Aq} there exist unique $2\times 2$
matrix--valued functions $A(t)$ and $q(t)$ such that
\begin{equation}\label{eq:genus2-second-order}
\omega''=2A(t)\,\omega'+q(t)\,\omega.
\end{equation}
For the family~\eqref{eq:genus2-family} one finds explicitly
\begin{equation}\label{eq:genus2-Aq}
A(t)=\frac{1}{D(t)}
\begin{pmatrix}
-108t^4 & -675t^2\\
750t & -162t^4
\end{pmatrix},
\qquad
q(t)=\frac{1}{D(t)}
\begin{pmatrix}
-27t^3 & -\frac{275}{2}t\\[2pt]
\frac{375}{2} & -33t^3
\end{pmatrix}.
\end{equation}

Since $e=\tfrac12$, the curvature of $A$ is $F_A=A'-A^2$ and the (matrix) quantum
Schwarzian associated to~\eqref{eq:genus2-second-order} is
\[
\mathcal S_{A,q}:=2(F_A-q).
\]
Using~\eqref{eq:genus2-Aq} one obtains the following $\End(\mathbb{C}^2)$-valued
$rational$ $2$-differential:
\begin{equation}\label{eq:genus2-matrix-schwarzian}
\mathcal S_{A,q}(t)=\frac{1}{D(t)^2}
\begin{pmatrix}
t^3(5832t^5-1518750) & 25t(4104t^5-303125)\\[3pt]
1125(3125-252t^5) & t^3(-10368t^5-2831250)
\end{pmatrix}.
\end{equation}

\begin{remark}\label{rem:genus2-characteristic}
The matrix Schwarzian $\mathcal S_{A,q}$ produces scalar invariants by taking
characteristic combinations. In the present genus two case one obtains in
particular the $2$-differentials
\[
\operatorname{tr}(\mathcal S_{A,q})\qquad\text{and}\qquad \operatorname{tr}(\mathcal S_{A,q}^2),
\]
which are well defined on each chart and hence on $X$ in the same sense as the
quantum curvature (they inherit the same Schwarzian anomaly under coordinate
change). By Lemma~\ref{lem:gauge-curvature-ehalf} and
Remark~\ref{rem:quantum-characteristic-invariants}, these are the first
instances of the \emph{quantum characteristic invariants} attached to the
second-order equation~\eqref{eq:genus2-second-order}.

In genus one, Dedekind's computation expresses the Schwarzian derivative of the
ratio of two periods as $2(F_A-q)$ for a scalar equation; see
\eqref{eq:dedekind-schwarzian}. Lemma~\ref{lem:gauge-curvature-ehalf} shows that
$2(F_A-q)$ is the intrinsic Schwarzian object associated to a second order
equation, and the present construction may be viewed as its extension from
elliptic curves to families of curves of arbitrary genus $g$, where the ODE
remains of second order but the coefficients become $g\times g$ matrices and
the resulting matrix Schwarzian yields canonical characteristic invariants.
Finally, since every genus two curve is hyperelliptic, the family
\eqref{eq:genus2-family} is a natural test case for the higher genus theory.
\end{remark}

\section{Cubic threefolds in $\mathbb{P}^4$}\label{sec:cubic-threefolds}

In this section we illustrate how the second order non-abelian formalism developed in
Section~\ref{sec:classical-quantum-second-order-ode} and Part~\ref{sec:periods} extends to a
basic higher-dimensional moduli problem, namely smooth cubic threefolds in $\mathbb{P}^4$.
While the Gauss--Manin connection for a one-parameter family of cubic threefolds is a
first-order system of rank $10$ on $H^3$, our approach produces a canonical \emph{second
order} system on the rank-$5$ Hodge bundle $E=H^{2,1}$ with coefficients in $\End(E)$.
For a natural deformation of the Fermat cubic, a large symmetry group forces this system
to split into one-dimensional character spaces; in a symmetry-adapted basis the
coefficient matrices become diagonal. We explain the geometric input (Hodge theory,
residues, and the Gauss--Manin system), state the resulting diagonal second order ODE,
and omit only the standard reduction computations.

\subsection{Cubic threefolds}

A \emph{cubic threefold} is a smooth hypersurface $X\subset\mathbb{P}^4$ defined by a
homogeneous polynomial $F(x_0,\dots,x_4)=0$ of degree $3$. Two classical examples are
the \emph{Fermat cubic threefold}
\[
X_{\mathrm{F}}:\quad x_0^3+x_1^3+x_2^3+x_3^3+x_4^3=0,
\]
and the \emph{Klein cubic threefold}
\[
X_{\mathrm{K}}:\quad x_0^2x_1+x_1^2x_2+x_2^2x_3+x_3^2x_4+x_4^2x_0=0.
\]
Both are smooth and admit large automorphism groups.

The Hodge numbers of a smooth cubic threefold are classical:
\[
h^{3,0}(X)=h^{0,3}(X)=0,\qquad h^{2,1}(X)=h^{1,2}(X)=5,\qquad h^{1,1}(X)=1,
\]
so that
\[
H^3(X,\mathbb{C})=H^{2,1}(X)\oplus H^{1,2}(X),\qquad \dim H^{2,1}(X)=5.
\]
Equivalently, the Hodge diamond has the form shown in Figure~\ref{fig:cubic-threefold-hodge-diamond}; see for instance \cite{L18,L20}.

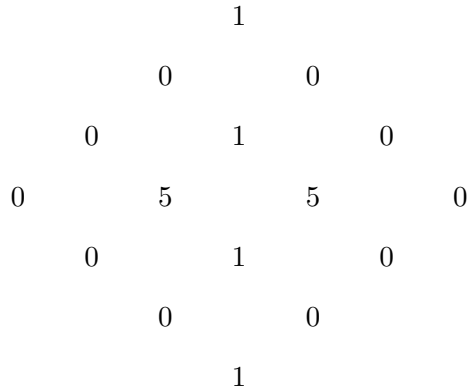
\begin{figure}[H]
\centering
\begin{tikzpicture}[scale=1.0, transform shape, x=1.4cm,y=0.8cm, every node/.style={inner sep=1pt}]
\node at (0,3) {$1$};
\node at (-0.7,2) {$0$};
\node at (0.7,2) {$0$};
\node at (-1.4,1) {$0$};
\node at (0,1) {$1$};
\node at (1.4,1) {$0$};
\node at (-2.1,0) {$0$};
\node at (-0.7,0) {$5$};
\node at (0.7,0) {$5$};
\node at (2.1,0) {$0$};
\node at (-1.4,-1) {$0$};
\node at (0,-1) {$1$};
\node at (1.4,-1) {$0$};
\node at (-0.7,-2) {$0$};
\node at (0.7,-2) {$0$};
\node at (0,-3) {$1$};
\end{tikzpicture}
\caption{Hodge diamond of a smooth cubic threefold.}
\label{fig:cubic-threefold-hodge-diamond}
\end{figure}

Since $h^{3,0}=0$, the Hodge structure on $H^3(X)$ is of type $(2,1)+(1,2)$. The associated
\emph{intermediate Jacobian} can be presented as the complex torus
\[
J(X):=H^{1,2}(X)\big/ H^3(X,\mathbb{Z}),
\]
where we view $H^3(X,\mathbb{Z})$ as a lattice in $H^{1,2}(X)$ via the projection
$H^3(X,\mathbb{C})\twoheadrightarrow H^{1,2}(X)$ coming from the Hodge decomposition
$H^3(X,\mathbb{C})=H^{2,1}(X)\oplus H^{1,2}(X)$. It is a principally polarized abelian variety of
dimension $5$ \cite{L18}. The intermediate Jacobian plays a central role in the
geometry of cubic threefolds (for instance in the Clemens--Griffiths irrationality
theorem).

\subsection{Fermat cubic}

This family is chosen because it gives a fully explicit computable model of the canonical second-order system.

Let
\[
X_0:\quad x_0^3+x_1^3+x_2^3+x_3^3+x_4^3=0
\]
be the Fermat cubic threefold. We consider the one-parameter deformation
\begin{equation}\label{eq:cubic-family}
X_t:\quad F_t:=x_0^3+x_1^3+x_2^3+x_3^3+x_4^3-3t\,x_0x_1x_2=0,
\qquad t\in U\subset\mathbb{C},
\end{equation}
which is smooth for generic $t$ (i.e.\ away from the discriminant locus).
Write $\pi:\mathcal{X}\to U$ for the corresponding family.

Let $\Omega$ be the standard homogeneous $4$-form on $\mathbb{C}^5$,
\[
\Omega=\sum_{i=0}^4 (-1)^i x_i\,dx_0\wedge\cdots\wedge \widehat{dx_i}\wedge\cdots\wedge dx_4.
\]
By Griffiths' description of primitive cohomology of smooth hypersurfaces, the Hodge
bundle $E=\pi_*\Omega^{2}_{\mathcal{X}/U}$ can be realized via residues \cite{L19,L20}:
\[
H^{2,1}(X_t)\ \cong\ \Bigl\{\ \operatorname{Res}_{X_t}\frac{P\Omega}{F_t^2}\ :\
P\in R_1(F_t)\ \Bigr\},
\]
where $R_\bullet(F_t)$ denotes the Jacobian ring of $F_t$. In our cubic threefold
situation one has $R_1(F_t)\cong\mathbb{C}\langle x_0,\dots,x_4\rangle$, hence a
convenient holomorphic frame is given by the residue classes
\begin{equation}\label{eq:residue-basis}
\omega_i(t):=\operatorname{Res}_{X_t}\frac{x_i\,\Omega}{F_t^2},
\qquad i=0,1,2,3,4.
\end{equation}
We write $\omega=(\omega_0,\dots,\omega_4)^t$.

\subsection{Canonical second-order equation }

The Gauss--Manin connection on $H^3_{\mathrm{dR}}(\mathcal{X}/U)$ yields a first order
system of rank $10$. As in Part~\ref{sec:periods}, under a genericity hypothesis
(Definition~\ref{def:generic-family}) one can pass canonically to a second order
equation on $E$ with coefficients in $\End(E)$:
\begin{equation}\label{eq:cubic-second-order}
\omega'' = 2A(t)\,\omega' + q(t)\,\omega,
\qquad A(t),q(t)\in \End(E_t),
\end{equation}
where primes denote differentiation with respect to $t$.
Concretely, differentiating~\eqref{eq:residue-basis} produces residue classes with
higher pole order; Griffiths--Dwork reduction expresses these classes back in the
$\mathbb{C}$-span of $\{\omega_0,\dots,\omega_4,\omega_0',\dots,\omega_4'\}$.
This reduction procedure is standard in the theory of Picard--Fuchs equations for
hypersurfaces; see \cite{L21,L22}. We omit the reduction computations and focus on the
structural features of~\eqref{eq:cubic-second-order} relevant to our non-abelian
Schwarzian viewpoint.

The family~\eqref{eq:cubic-family} is invariant under a finite diagonal symmetry group
$G$ generated by
\[
(x_0,x_1,x_2,x_3,x_4)\longmapsto
(\zeta_0x_0,\zeta_1x_1,\zeta_2x_2,\zeta_3x_3,\zeta_4x_4),
\qquad \zeta_i^3=1,\ \zeta_0\zeta_1\zeta_2=1.
\]
This group acts fiberwise on $X_t$ and hence on $H^{2,1}(X_t)$, and it commutes with the
Gauss--Manin connection. In particular, the coefficient matrices $A(t)$ and $q(t)$ in
\eqref{eq:cubic-second-order} commute with the $G$-action.

The residue classes~\eqref{eq:residue-basis} are eigenvectors for the diagonal action.
Indeed, $F_t$ is $G$-invariant, and $\Omega$ scales by $\zeta_0\zeta_1\zeta_2\zeta_3\zeta_4
=\zeta_3\zeta_4$, so $\omega_i$ transforms by a one-dimensional character of $G$.
Consequently, the $G$-representation on $E_t\simeq H^{2,1}(X_t)$ splits as a direct sum
of five one-dimensional character spaces. By Schur's lemma, any endomorphism commuting
with $G$ preserves each character line. Therefore, in a $G$-eigenbasis the matrices
$A(t)$ and $q(t)$ are diagonal:
\begin{equation}\label{eq:cubic-diagonal}
A(t)=\mathrm{diag}(a_0(t),\dots,a_4(t)),
\qquad
q(t)=\mathrm{diag}(q_0(t),\dots,q_4(t)),
\end{equation}
for meromorphic functions $a_i(t),q_i(t)$ on $U$ (with poles along the discriminant).
Equivalently, the system~\eqref{eq:cubic-second-order} decouples into five scalar second
order ODEs
\[
\omega_i'' = 2a_i(t)\,\omega_i' + q_i(t)\,\omega_i,
\qquad i=0,1,2,3,4.
\]

For the deformation~\eqref{eq:cubic-family}, the diagonal coefficients can be made
completely explicit.

\begin{proposition}\label{prop:cubic-explicit-diagonal}
For the family~\eqref{eq:cubic-family}, in the residue frame~\eqref{eq:residue-basis} the
second-order equation~\eqref{eq:cubic-second-order} has diagonal coefficient matrices
\[
A(t)=\mathrm{diag}(0,0,0,a(t),a(t)),
\qquad
q(t)=\mathrm{diag}(0,0,0,b(t),b(t)),
\]
where
\begin{equation}\label{eq:cubic-explicit-a-b}
a(t)=\frac{3t^2}{2(1-t^3)},
\qquad
b(t)=\frac{t}{1-t^3}.
\end{equation}
Equivalently,
\[
\omega_0'=\omega_1'=\omega_2'=0,
\qquad
\omega_3''=\frac{3t^2}{1-t^3}\,\omega_3' + \frac{t}{1-t^3}\,\omega_3,
\qquad
\omega_4''=\frac{3t^2}{1-t^3}\,\omega_4' + \frac{t}{1-t^3}\,\omega_4.
\]
In the more classical notation
\(\omega''=P(t)\,\omega' + Q(t)\,\omega\)
we have \(P(t)=2A(t)\) and \(Q(t)=q(t)\), hence
\[
P(t)=\mathrm{diag}\Bigl(0,0,0,\frac{3t^2}{1-t^3},\frac{3t^2}{1-t^3}\Bigr),
\qquad
Q(t)=\mathrm{diag}\Bigl(0,0,0,\frac{t}{1-t^3},\frac{t}{1-t^3}\Bigr).
\]
\end{proposition}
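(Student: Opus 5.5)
The plan is to compute the Gauss--Manin derivatives of the residue frame \eqref{eq:residue-basis} directly by Griffiths--Dwork reduction. Then I read off $A(t)$ and $q(t)$ from the unique decomposition $\omega''=2A\,\omega'+q\,\omega$ guaranteed by Definition~\ref{def:period-Aq}. Differentiating under the residue gives
\[
\omega_i'=\operatorname{Res}_{X_t}\frac{6\,x_0x_1x_2\,x_i\,\Omega}{F_t^3},
\qquad
\omega_i''=\operatorname{Res}_{X_t}\frac{54\,(x_0x_1x_2)^2\,x_i\,\Omega}{F_t^4}.
\]
The basic reduction identity is the usual one: if $P=\sum_j B_j\,\partial_jF_t$ with $B_j$ homogeneous, then
\[
\frac{P\,\Omega}{F_t^{k+1}}\equiv\frac{1}{k}\,\frac{\bigl(\sum_j\partial_jB_j\bigr)\,\Omega}{F_t^{k}}
\]
modulo exact forms. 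Here $\partial_jF_t=3x_j^2-3t\,(\text{product of the other two among }x_0,x_1,x_2)$ for $j\le 2$, and $\partial_jF_t=3x_j^2$ for $j=3,4$.

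First I use the diagonal group $G$ to organize the computation, as in the discussion preceding the statement. Each $\omega_i$ spans a character line, and $G$ commutes with $\nabla^{\mathrm{GM}}$. So every reduction of $\omega_i'$ and $\omega_i''$ stays inside the span of classes with the same character. This already yields the diagonal shape \eqref{eq:cubic-diagonal}, and it lets me treat each index $i$ separately as a scalar problem.

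For $i=0,1,2$ the numerator is $x_0x_1x_2x_i$; I treat $i=0$, the others being identical after permuting $x_0,x_1,x_2$. I will use $x_1x_2=\bigl(x_0^2-\tfrac13\partial_0F_t\bigr)/t$ to express $x_0^2x_1x_2$ in the Jacobian ideal. Then I check that the divergence term produced by the reduction vanishes, or at least lands in a character space different from that of $\omega_0$. In the latter case it must be zero by the $G$-equivariance above, which gives $\omega_0'=0$ and hence $A_{00}=q_{00}=0$.

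For $i=3,4$ the variables $x_3,x_4$ enter only through $x_3^3+x_4^3$. I would therefore reduce $\omega_3'$ and $\omega_3''$ by writing monomials in $x_0,x_1,x_2$ modulo the Jacobian ideal of the plane Hesse-type cubic $x_0^3+x_1^3+x_2^3-3t\,x_0x_1x_2$. The factor $x_3$ is carried along, with $\partial_3F_t=3x_3^2$ used only to absorb powers of $x_3$. Eliminating between the two reductions should give the Dwork-type hypergeometric relation
\[
(1-t^3)\,\omega_3''=3t^2\,\omega_3'+t\,\omega_3,
\]
that is, $a=\tfrac{3t^2}{2(1-t^3)}$ and $b=\tfrac{t}{1-t^3}$. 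The relation for $\omega_4$ then follows by the symmetry $x_3\leftrightarrow x_4$.

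The main obstacle is this last reduction: tracking the pole-order-$4$ term $(x_0x_1x_2)^2x_3$ down to order $3$ and then order $2$ without sign or factor errors. As a consistency check I would compare with the known Picard--Fuchs operator of the Hesse pencil, whose singularities at $t^3=1$ match the denominator $1-t^3$.
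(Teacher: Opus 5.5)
\textbf{The $i=0,1,2$ step fails.} It cannot be repaired, because the asserted identity $\omega_0'=\omega_1'=\omega_2'=0$ is false. The paper's sketch makes the same slip: a numerator in the Jacobian ideal lowers the pole order, but it does not kill the class.

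Your single substitution $x_1x_2=(x_0^2-\tfrac13\partial_0F_t)/t$ is also circular. It rewrites $x_0^2x_1x_2$ through $x_0^4$, which gives back $x_0^2x_1x_2$. You have to cycle through all three partials:
\[
3(1-t^3)\,x_0^2x_1x_2=x_1x_2\,\partial_0F_t+t\,x_2^2\,\partial_1F_t+t^2x_0x_2\,\partial_2F_t .
\]
Hence $6x_0^2x_1x_2=\sum_jB_j\partial_jF_t$ with $B=\frac{2}{1-t^3}(x_1x_2,\,t x_2^2,\,t^2x_0x_2,\,0,\,0)$. Its divergence is $\frac{2t^2}{1-t^3}x_0\neq0$, and this is well defined because the syzygy ambiguity lies in $J_1=0$. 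Your reduction identity with $k=2$ then gives
\[
\omega_0'=\frac{t^2}{1-t^3}\,\omega_0 ,
\]
and the same holds for $\omega_1,\omega_2$ by permuting $x_0,x_1,x_2$.

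Your fallback, that the divergence might land in another character space, is excluded by your own equivariance remark. Write $\chi_i$ for the character of $\omega_i$. The characters on $H^{1,2}$ are $\bar\chi_0,\bar\chi_1,\bar\chi_2,\bar\chi_3=\chi_4,\bar\chi_4=\chi_3$. So the $\chi_0$-isotypic part of all of $H^3(X_t)$ is the line $\mathbb{C}\omega_0$, and the only question is whether the scalar vanishes. It does not. As an independent check, expand $F_t^{-2}$ about the Fermat point and use the Gamma-product periods. Only the terms $(x_0x_1x_2)^{3k}$ survive, and every period of $\omega_0$ is a constant multiple of $\sum_k\frac{(1/3)_k}{k!}t^{3k}=(1-t^3)^{-1/3}$, which is not constant.

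\textbf{This also undercuts the framework you invoke.} Since $x_0x_1x_2x_i\in J(F_t)$ for $i\le2$, the Higgs field kills $\omega_0,\omega_1,\omega_2$. The classes $(\omega,\omega')$ therefore span only a $7$-dimensional subspace of the $10$-dimensional $H^3(X_t)$, and the family is not generic in the sense of Definition~\ref{def:generic-family}. There is no ``unique decomposition'' to read off. In the slots $i\le2$, any pair with $q_i=c'+c^2-2a_ic$, where $c=\frac{t^2}{1-t^3}$, is admissible. The stated choice $a_i=q_i=0$ is not, since $\omega_i''=(c'+c^2)\,\omega_i=\frac{2t(1+t^3)}{(1-t^3)^2}\,\omega_i\neq0$.

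\textbf{The $i=3,4$ part is sound, and the claimed coefficients are right.} Multiplying the identity above by $x_1x_2$ gives $3(1-t^3)(x_0x_1x_2)^2=x_1^2x_2^2\,\partial_0F_t+t\,x_1x_2^3\,\partial_1F_t+t^2x_0x_1x_2^2\,\partial_2F_t$; no $\partial_3F_t$ is needed. With this, the order-$4$ form $54(x_0x_1x_2)^2x_3\Omega/F_t^4$ reduces to $\frac{6}{1-t^3}(t\,x_2^3+2t^2x_0x_1x_2)\,x_3\Omega/F_t^3$. Substitute $x_2^3=\frac13x_2\,\partial_2F_t+t\,x_0x_1x_2$ and reduce the Jacobian term once more. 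This yields $(1-t^3)\,\omega_3''=3t^2\omega_3'+t\,\omega_3$, which is the Hesse-pencil equation annihilating ${}_2F_1(\tfrac13,\tfrac13;\tfrac23;t^3)$.

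So only the $i=3,4$ half of the proposition is provable, and with it the value of $s(t)$. The $i\le2$ half should read $\omega_i'=\frac{t^2}{1-t^3}\,\omega_i$, with diagonal coefficients that are not canonically determined.
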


\begin{proof}
We sketch the standard reduction argument and omit the routine algebra.
Differentiating~\eqref{eq:residue-basis} with respect to $t$ gives
\[
\omega_i'\;=\;-2\,\operatorname{Res}_{X_t}\frac{x_i\,(\partial_t F_t)\,\Omega}{F_t^3}
\;=\;6\,\operatorname{Res}_{X_t}\frac{x_i\,x_0x_1x_2\,\Omega}{F_t^3}.
\]
For $i=0,1,2$ the corresponding degree--$4$ numerator lies in the Jacobian ideal of
$F_t$, hence its residue class vanishes; this yields $\omega_0'=\omega_1'=\omega_2'=0$.
For $i=3,4$ one differentiates once more, obtaining a residue with pole order $4$ whose
degree--$7$ numerator is a multiple of $(x_0x_1x_2)^2$.
Using the relations
\(\partial_{x_0}F_t=3(x_0^2-tx_1x_2)\),
\(\partial_{x_1}F_t=3(x_1^2-tx_0x_2)\),
\(\partial_{x_2}F_t=3(x_2^2-tx_0x_1)\),
together with the Griffiths--Dwork reduction identity
\cite{L21,L22}, one rewrites the cohomology class represented by this rational form with pole order $4$ in the span of
\(\omega_i\) and \(\omega_i'\). In the present monomial deformation, the reduction is
particularly simple and produces the diagonal coefficients~\eqref{eq:cubic-explicit-a-b}.
\end{proof}

For $e=\tfrac12$ the matrix Schwarzian attached to~\eqref{eq:cubic-second-order} is
\[
\mathcal{S}_{A,q}=2(F_A-q),\qquad F_A=A'-A^2.
\]
In the diagonal frame~\eqref{eq:cubic-diagonal}, $\mathcal{S}_{A,q}$ is diagonal with
entries $2(a_i'-a_i^2-q_i)$; thus the matrix Schwarzian packages scalar Schwarzian
expressions into a single $\End(E)$-valued invariant.

For the explicit monomial deformation~\eqref{eq:cubic-family}, combining
Proposition~\ref{prop:cubic-explicit-diagonal} with the definition of curvature yields a
closed form for the matrix Schwarzian:
\begin{equation}\label{eq:cubic-explicit-schwarzian}
\mathcal{S}_{A,q}(t)
=\mathrm{diag}(0,0,0,s(t),s(t)),
\qquad
s(t)=\frac{t(t^3+8)}{2(1-t^3)^2}.
\end{equation}

More invariantly, by the discussion in
Section~\ref{sec:classical-quantum-second-order-ode} and
Corollary~\ref{cor:charpoly-well-defined}, the conjugacy class of $\mathcal{S}_{A,q}$ is
independent of the choice of a fundamental solution of the second order system, and the
functions
\[
\operatorname{tr}(\mathcal{S}_{A,q}^r),\qquad 1\le r\le 5,
\]
define intrinsic invariants of the one-parameter family~\eqref{eq:cubic-family}.
In the explicit diagonal form~\eqref{eq:cubic-explicit-schwarzian} we have
\[
\operatorname{tr}(\mathcal{S}_{A,q}^r)(t)=2\,s(t)^r
=2\left(\frac{t(t^3+8)}{2(1-t^3)^2}\right)^r,
\qquad 1\le r\le 5.
\]  In
this way, periods of cubic threefolds give a higher-dimensional test case for the
non-abelian Schwarzian formalism, analogous to Dedekind's genus-one picture but now in
weight three and with genuinely non-abelian coefficient objects.

\part{The Mass-Spring System}
In this part we discuss mass--spring systems as a mechanical testing ground for the non-abelian Schwarzian, and use them to motivate the terminology ``classical'' and ``quantum'' connections.

\section{The classical mass--spring system}\label{sec:mass-spring-system}
The mass--spring system provides a concrete source of second-order equations with matrix coefficients.
It also serves as a simple model for the local-to-global viewpoint underlying our quantum formalism: the coefficients are best regarded as geometric objects on a \emph{time curve} rather than as functions of an absolute time parameter.

\subsection{What is time?}\label{subsec:why-quantum}

Imagine a universe in which time runs on an abstract Riemann surface $X$:
a universe without a beginning, without an end, without any distinguished moment,
and without any ``bangs.'' A viewpoint of this kind appears in no--boundary
proposal of Hartle--Hawking \cite{L35}. In that proposal, the question of what happened
before the Big~Bang becomes as meaningless as asking what lies south of the South~Pole.

A holomorphic time--chart on $X$ may be thought of as a local clock $t$,
while another chart provides a clock $\tau$. The two clocks are related by a
biholomorphic ``time difference'' $\lambda$ satisfying $t=\lambda(\tau)$.

\subsection{The Hooke's law}\label{subsec:mass-spring-libretexts}
We begin with a frictionless unit mass attached to a spring of stiffness $k>0$.
The restoring force is given by Hooke's law:
\begin{equation}\label{eq:hooke-law}
F=-k\,\psi.
\end{equation}
For a unit mass, Newton's equation therefore yields
\[
\ddot{\psi}+k\psi=0.
\]
To match the normalization used throughout the paper, we rewrite this as
\begin{equation}\label{eq:mass-spring-single}
\ddot{\psi}=q\,\psi,
\qquad
q=-k.
\end{equation}

\begin{figure}[H]
\centering
\begin{tikzpicture}[scale=1, thick]
 % wall
 \fill[gray!35] (-0.3,-0.6) rectangle (0,0.6);
 \draw (0,-0.6) -- (0,0.6);

 % spring 1
 \draw (0,0) -- (0.4,0);
 \draw[decorate, decoration={zigzag,segment length=4,amplitude=2}] (0.4,0) -- (2.4,0);
 \node[above] at (1.4,0.1) {$k_1$};

 % mass 1
 \draw[fill=gray!15] (2.4,-0.25) rectangle (3.2,0.25);
 \node at (2.8,0) {};

 % spring 2
 \draw (3.2,0) -- (3.6,0);
 \draw[decorate, decoration={zigzag,segment length=4,amplitude=2}] (3.6,0) -- (5.6,0);
 \node[above] at (4.6,0.1) {$k_2$};

 % mass 2
 \draw[fill=gray!15] (5.6,-0.25) rectangle (6.4,0.25);
 \node at (6.0,0) {};

 % displacement arrows
 \draw[->] (2.8,-0.7) -- (3.4,-0.7) node[midway,below] {$\psi_1$};
 \draw[->] (6.0,-0.7) -- (6.6,-0.7) node[midway,below] {$\psi_2$};
\end{tikzpicture}
\caption{Mass--spring system with $n=2$.}
\label{fig:two-mass-two-spring}
\end{figure}
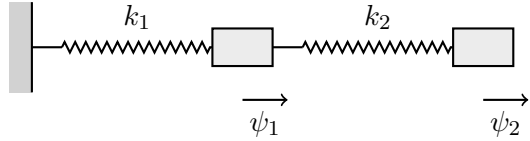

For a system with two degrees of freedom ($n=2$), with spring constants $k_1$ and $k_2$ , one obtains the coupled system
\begin{equation}\label{eq:libretexts-6211}
\begin{aligned}
\ddot{\psi}_{1} &= -(k_{1}+k_{2})\,\psi_{1} + k_{2}\,\psi_{2},\\[4pt]
\ddot{\psi}_{2} &= k_{2}\,\psi_{1} - k_{2}\,\psi_{2}.
\end{aligned}
\end{equation}
Equivalently, writing $\boldsymbol{\psi}=\begin{pmatrix}\psi_1\\ \psi_2\end{pmatrix}$ this is a $2\times 2$ second-order matrix ODE
\begin{equation}\label{eq:two-mass-matrix-second-order}
\ddot{\boldsymbol{\psi}} = q\,\boldsymbol{\psi},
\qquad
q=\begin{pmatrix} -k_1-k_2 & k_2\\ k_2 & -k_2\end{pmatrix}.
\end{equation}

In the presence of friction (\emph{damping}) the system takes the form
\begin{equation}\label{eq:damped-matrix-ode}
\ddot{\boldsymbol{\psi}}
= p\,\dot{\boldsymbol{\psi}} + q\,\boldsymbol{\psi},
\end{equation}
where $p$ is the damping matrix and $q$ is the stiffness matrix.

Now imagine that $p$ and $q$ vary meromorphically with respect to time. In a local clock $t$ we then have
\begin{equation}\label{eq:complex-time-damped}
\ddot{\boldsymbol{\psi}}=p(t)\,\dot{\boldsymbol{\psi}}+q(t)\,\boldsymbol{\psi},
\qquad p(t),q(t)\in M_n(\Cc).
\end{equation}

\subsection{Eternal mass--spring system}
Call such a system \emph{eternal} if it is compatible with changes of local clock, i.e.\ if it depends only on the time curve $X$ and not on the chosen coordinate.
Concretely, under a change of clock $t=\lambda(\tau)$, the chain rule above shows that \eqref{eq:complex-time-damped} retains the same form provided the coefficients transform by
\[
\widetilde p(\tau)=\lambda'(\tau)\,\bigl(p\circ\lambda\bigr)(\tau)+\frac{\lambda''(\tau)}{\lambda'(\tau)}\,I,
\qquad
\widetilde q(\tau)=(\lambda'(\tau))^{2}\,\bigl(q\circ\lambda\bigr)(\tau),
\]
where $I$ denotes the identity matrix.
In particular, $p$ transforms affinely (with an inhomogeneous $\lambda''/\lambda'$ term), while $q$ transforms tensorially with weight~$2$.
In our notation, for an eternal system the quantity $A=p/2$ transforms from one chart to another as a time-connection with eccentricity $e=\tfrac12$, while $q$ transforms as a time-quadratic differential. A natural coordinate-free invariant is obtained from the Schwarzian curvature. For $e=\tfrac12$ the curvature of $A$ is
\[
F_A = A' - A^{2},
\]
and the associated Schwarzian is $S_{A,q}:=F_A-q$. While $S_{A,q}$ has a scalar projective anomaly, its traceless part does not. Consequently the \emph{projective curvature}
\begin{equation}\label{eq:proj-curvature}
\mathcal{R}
:= S_{A,q} -\frac{1}{n}\tr(S_{A,q})\,I
\end{equation}
is a well-defined, eternal $\End(\Cc^n)$-valued meromorphic quadratic differential on $X$, independent of the local clock. Notice that $\mathcal{R}$ always vanishes when $n=1$.

\subsection{Quantum mass--spring systems}
In the quantum mass--spring system one replaces the matrix-valued coefficients $p$ and $q$ by local operators acting on the vector space of local holomorphic functions.
Under changes of local clock these operators are required to satisfy the quantum transformation laws of Definitions~\ref{def:quantum-connection} and~\ref{def:quantum-m-differential}, so that the damping term defines a quantum time-connection and the stiffness term defines a quantum time-quadratic differential.
From this viewpoint, a specific moment plays no distinguished role; what matters is how the local operators glue together globally on the time curve~$X$.

This resonates with a point-free theme in Grothendieck's writings: the structure of a space is encoded by the sheaf-theoretic data that glue on overlaps, and ``points'' are secondary, recovered only when needed from the function theory \cite{L34}.
Quantum theory exhibits an analogous shift of emphasis, replacing a classical phase point by an algebra of observables.
Our operator-valued coefficients, together with their cocycle laws under changes of clock, fit naturally into this algebraic viewpoint and help explain the terminology ``quantum'' in this paper.

\bigskip
\begin{flushright}
\textbf{Mehrzad Ajoodanian}\\
\texttt{mehrzad77@gmail.com}
\end{flushright}

\end{document}